\documentclass[letterpaper]{amsart}
\usepackage[margin=1in]{geometry} 
\usepackage[utf8]{inputenc}
\usepackage{amsmath,amsthm,amssymb,amsfonts,mathrsfs}
\usepackage{dsfont}
\usepackage{bookmark}
\usepackage{xcolor}
\usepackage[hyperpageref]{backref}

\renewcommand*{\backref}[1]{}

\renewcommand*{\backrefalt}[4]{%
  \ifcase #1
    (No citations)
  \else
    \space #2%
  \fi
}

\newcommand{\FR}{\mathrm{FR}}

\numberwithin{equation}{section}
    
\newtheorem{theorem}{Theorem}[section]
\newtheorem{lemma}[theorem]{Lemma}
\newtheorem{proposition}[theorem]{Proposition}
\newtheorem{corollary}[theorem]{Corollary}

\theoremstyle{definition}
\newtheorem{definition}[theorem]{Definition}

\theoremstyle{remark}
\newtheorem{remark}[theorem]{Remark}

\title[Uncertainty principles]{Uncertainty Principles, Spectral Localization, and Singular Schrödinger Operators on Compact Manifolds}
\author[A. Iosevich]{Alex Iosevich}
\author[C. Park]{Chamsol Park}
\address{Department of Mathematics, University of Rochester, Rochester, NY 14627, USA}
\email{alex.iosevich@rochester.edu}
\address{Department of Mathematics, University of Rochester, Rochester, NY 14627, USA}
\email{cpark63@ur.rochester.edu}
\thanks{The first-listed author was supported by NSF DMS-2506858.}
\date{}
\keywords{Uncertainty principles, Laplace-Beltrami operators, spectral projection bounds, restriction theorems, real-valued potentials}
\subjclass[2020]{35P15, 35P20, 35Q40, 58J50}

\usepackage{pgfplots}
\pgfplotsset{compat=1.16}

\begin{document}

\begin{abstract}
We establish uncertainty principles on compact Riemannian manifolds without boundary by combining restriction estimates for orthonormal systems with spectral projection bounds for Laplace-Beltrami and Schr\"odinger operators. Our results relate the size of the support of spectrally localized functions to the cardinality of the underlying spectral cluster and to Fourier-ratio type quantities. We obtain analogues for Schr\"odinger operators with singular potentials belonging to Kato and scaling-critical classes. As an application, we prove uniqueness results for recovery from incomplete spectral data on compact manifolds. Under curvature assumptions, including nonpositive and negative sectional curvatures, we also prove logarithmically improved uncertainty principles associated with shrinking spectral windows.
\end{abstract}

\maketitle

\section{Introduction}

Uncertainty principles play a central role in harmonic analysis, spectral theory, and signal recovery. Broadly speaking, such principles assert that a function and its frequency representation cannot both be sharply localized. In recent work \cite{IosevichMayeli2025ACHA}, the authors studied this philosophy in the setting of finite abelian groups, showing that restriction-theoretic methods lead to strengthened uncertainty principles and applications to sparse signal recovery from incomplete frequency data. The purpose of the present paper is to develop analogous uncertainty principles in the setting of compact Riemannian manifolds and Schr\"odinger operators with singular potentials, using spectral projection estimates and Fourier-ratio type quantities.

Let $\Delta_g$ be the Laplace-Beltrami operator on a compact Riemannian manifold $(M, g)$ without boundary, and the $e_j$ are the eigenfunctions of $-\Delta_g$ associated with the eigenvalues $\lambda_j$ such that
\begin{align*}
    -\Delta_g e_j=\lambda_j^2 e_j,\quad 0\leq \lambda_1\leq \lambda_2\leq \lambda_3 \leq \cdots \leq +\infty.
\end{align*}
We define
\begin{align}\label{setup:flq-FR-P-S}
    \begin{split}
        \|f\|_{\widehat{\ell}^q} &:= \left( \sum_j |\langle f,e_j\rangle_{L^2(M)}|^q \right)^{\frac1q}. \\
        \FR_{q}(f)&:=\frac{\|f\|_{\widehat{\ell}^q}}{\|f\|_{L^2 (M)}}, \\
        P&:=\sqrt{-\Delta_g}, \\
        S&:=\{j: \lambda_j\in [\lambda, \lambda+1) \}.
    \end{split}
\end{align}

The guiding principle behind this paper is that strong spectral localization should force quantitative delocalization in physical space. In Euclidean harmonic analysis, this phenomenon is reflected in uncertainty principles and restriction estimates. Our goal is to show that analogous effects persist on compact Riemannian manifolds and remain valid in the presence of singular Schr\"odinger potentials. The Fourier ratio $\FR_q(f)$ serves as a quantitative measure of spectral concentration, and the estimates proved below show that functions with strong spectral localization cannot concentrate on sets of arbitrarily small measure. The logarithmic improvements obtained under curvature assumptions reflect the sharper spectral cluster estimates available on manifolds with nonpositive or negative sectional curvature. These uncertainty principles also lead to uniqueness results for recovery from incomplete spectral data, extending the restriction-theoretic recovery framework developed in \cite{IosevichMayeli2025ACHA}.

Our first result establishes a restriction-theoretic uncertainty principle on compact manifolds. It shows that the size of the support of a spectrally localized function is controlled by both the dimension of the underlying spectral cluster and the Fourier ratio $\FR_q(f)$.

\begin{theorem}\label{thm:UP-FR-on}
    Let $(M, g)$ be a smooth compact Riemannian manifold without boundary with dimension $n\geq 2$. Suppose $\lambda\geq 1$, $\Pi_\lambda=\mathds{1}_{[\lambda, \lambda+1]} (\sqrt{-\Delta_g})$ is the spectral projection of $-\Delta_g$ onto the spectral cluster $\Pi_\lambda L^2 (M)$, and $(f_j)_{j\in J}\subset \Pi_\lambda L^2 (M)$ is any orthonormal system. Suppose $f$ is a nonzero function on $M$ such that $f$ is supported in $E\subset M$ and
    \begin{align*}
        f(y)=\sum_{j\in J} \langle f, f_j\rangle_{L^2 (M)} f_j (y).
    \end{align*}
    Then for $q\geq 2$ there is a uniform constant $C>0$ such that
    \begin{align}\label{ineq:UP-FR-on}
        (\# J)^{\frac{1}{2}} |E|^{\frac{q-2}{2q}}&\geq \frac{1}{C\lambda^{\sigma(q)}}\cdot \frac{\|f\|_{L^2 (M)}}{\left(\sum_{j\in J} |\langle f, f_j \rangle_{L^2 (M)}|^{2\alpha (q)} \right)^{\frac{1}{2\alpha(q)}} },
    \end{align}
    where
    \begin{align}\label{setup:sigma-alpha}
        \begin{split}
            \begin{cases}
                \sigma(q)=n\left(\frac{1}{2}-\frac{1}{q} \right)-\frac{1}{2}, \text{ and } \alpha(q)=\frac{q(n-1)}{2n}, & \text{if } \frac{2(n+1)}{n-1}\leq q\leq \infty, \\
                \sigma(q)=\frac{n-1}{2}\left(\frac{1}{2}-\frac{1}{q} \right), \text{ and } \alpha(q)=\frac{2q}{q+2}, & \text{if } 2\leq q\leq \frac{2(n+1)}{n-1}.
            \end{cases}
        \end{split}
    \end{align}
    Consequently, if we take $J=S=\{j: \lambda_j\in [\lambda, \lambda+1) \}$ and $f_j\equiv e_j$ for all $j\in S$, then
    \begin{align}\label{ineq:UP-FR-eig}
        (\# S)^{\frac{1}{2}} |E|^{\frac{q-2}{2q}}\geq \frac{1}{C\lambda^{\sigma(q)}}\cdot \frac{1}{\FR_{2\alpha(q)}(f)}, \quad q\geq 2.
    \end{align}
\end{theorem}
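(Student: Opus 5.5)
We would prove \eqref{ineq:UP-FR-on} by combining Hölder's inequality on the support $E$ with an orthonormal-system version of Sogge's spectral cluster estimates. We would invoke this as a Frank--Sabin type bound; we treat it as an outside input.

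The orthonormal-system bound we need states: for any orthonormal $(f_j)_{j\in J}\subset \Pi_\lambda L^2(M)$ and coefficients $(a_j)$,
\begin{align*}
    \Bigl\|\sum_{j\in J} a_j f_j\Bigr\|_{L^q(M)} \leq C\lambda^{\sigma(q)} (\# J)^{\frac12-\frac{1}{2\alpha(q)}}\Bigl(\sum_{j\in J}|a_j|^{2\alpha(q)}\Bigr)^{\frac{1}{2\alpha(q)}} .
\end{align*}
The plan is to derive this by duality from the density estimate
\begin{align*}
    \Bigl\|\sum_j \nu_j|f_j|^2\Bigr\|_{L^{q/2}}\lesssim \lambda^{2\sigma(q)}\|\nu\|_{\ell^{\alpha(q)}}.
\end{align*}
This density estimate is the Frank--Sabin theorem for spectral clusters, with exactly the exponents $\alpha(q)=\frac{2q}{q+2}$ and $\alpha(q)=\frac{q(n-1)}{2n}$ in the two ranges; at $\alpha=1$ it reduces to the diagonal Sogge bound. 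Alternatively, one can write $\sum a_j f_j(x)=\int K(x,y)g(y)\,dy$ and use Cauchy--Schwarz with weights: $|\sum a_j f_j|^2\le (\sum |a_j|^{2}\mu_j^{-1})(\sum \mu_j|f_j|^2)$. Choosing $\mu_j$ suitably and applying the density bound, then Hölder in $j$ to pass between $\ell^{2\alpha}$ and the weights, produces the factor $(\#J)^{\frac12-\frac1{2\alpha}}$ or a comparable power.

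The argument itself is short. Since $f$ is supported in $E$, Hölder's inequality gives $\|f\|_{L^2}\le |E|^{\frac12-\frac1q}\|f\|_{L^q}$. Here $\frac12-\frac1q=\frac{q-2}{2q}$, and the case $q=\infty$ is read as $|E|^{1/2}$. Applying the orthonormal estimate with $a_j=\langle f,f_j\rangle$ yields
\begin{align*}
    \|f\|_{L^2}\le C\lambda^{\sigma(q)}(\#J)^{\frac12-\frac1{2\alpha}}|E|^{\frac{q-2}{2q}}\Bigl(\sum_j|\langle f,f_j\rangle|^{2\alpha}\Bigr)^{\frac1{2\alpha}} .
\end{align*}
Since $(\#J)^{\frac12-\frac1{2\alpha}}\le(\#J)^{1/2}$ when $\#J\ge1$, this is \eqref{ineq:UP-FR-on} after dividing. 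If the orthonormal bound has a worse $\#J$ power, it still suffices as long as that power is at most $\frac12$. A crude fallback handles this case: use Sogge's bound $\|\sum a_jf_j\|_q\lesssim\lambda^{\sigma(q)}(\sum|a_j|^2)^{1/2}$ and Hölder $\ell^2\le(\#J)^{\frac12-\frac1{2\alpha}}\ell^{2\alpha}$, which holds when $\alpha\ge1$. Note that $\alpha(q)\ge1$ holds for $q\ge2$ in the first range, and in the second range when $q\ge\frac{2n}{n-1}$. For $\alpha<1$, Hölder runs the other way, but then $\|a\|_{\ell^2}\le\|a\|_{\ell^{2\alpha}}$, which is even better.

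So the fallback alone proves the theorem with factor $(\#J)^{1/2}$. The only subtle point is making sure the $\#J$ power never exceeds $\tfrac12$, and by the observations above it does not in either regime. The corollary \eqref{ineq:UP-FR-eig} follows by taking $f_j=e_j$, $j\in S$. These lie in $\Pi_\lambda L^2$, and $f$ being spectrally supported in $S$ gives $\sum_{j\in S}|\langle f,e_j\rangle|^{2\alpha}=\|f\|_{\widehat\ell^{2\alpha}}^{2\alpha}$, so the right side becomes $1/(C\lambda^{\sigma}\FR_{2\alpha(q)}(f))$.

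The main obstacle is the exact form of the orthonormal restriction inequality. We would first try the fallback route, since it already closes the argument, and mention Frank--Sabin only as the source of the sharper $\#J$ dependence.
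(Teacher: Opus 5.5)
Your fallback argument is correct and complete. It takes a genuinely different and more elementary route than the paper's proof.

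\textbf{The paper's route.} It applies Cauchy--Schwarz pointwise in $j$, namely $|f(y)|\le(\#J)^{1/2}\bigl(\sum_j|\langle f,f_j\rangle|^2|f_j(y)|^2\bigr)^{1/2}$. It then uses H\"older on $E$ with exponents $\frac{q-2}{q}$ and $\frac{2}{q}$, and finally the Frank--Sabin density bound with $\nu_j=|\langle f,f_j\rangle|^2$. The factor $(\#J)^{1/2}$ comes from the Cauchy--Schwarz step, and the $\ell^{2\alpha(q)}$ norm comes from Frank--Sabin.

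\textbf{Your route.} You apply H\"older on $E$ directly to $f$, then the single-function cluster bound $\|f\|_{L^q}\lesssim\lambda^{\sigma(q)}\|f\|_{L^2}$, and finally $\|f\|_{L^2}=\|a\|_{\ell^2}\le(\#J)^{\frac12-\frac{1}{2\alpha(q)}}\|a\|_{\ell^{2\alpha(q)}}$. This needs no orthonormal-system input at all. It also gives a stronger inequality, with $(\#J)^{\frac12-\frac1{2\alpha(q)}}$ in place of $(\#J)^{1/2}$. In fact it shows the theorem follows from the standard bound $|E|^{\frac{q-2}{2q}}\ge (C\lambda^{\sigma(q)})^{-1}$, since $\|f\|_{L^2}/\|a\|_{\ell^{2\alpha(q)}}\le(\#J)^{1/2}$.

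\textbf{Presentation.} You should present the fallback as \emph{the} proof. The Frank--Sabin route you sketch first cannot beat Sogge's bound for a single function. Optimizing your weighted Cauchy--Schwarz against the density bound yields at best $\lambda^{\sigma(q)}\|a\|_{\ell^{2\alpha/(\alpha+1)}}$, which is weaker than $\lambda^{\sigma(q)}\|a\|_{\ell^2}$ because $\frac{2\alpha}{\alpha+1}\le 2$. So it does not supply a ``sharper $\#J$ dependence.''

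\textbf{Minor slip.} In fact $\alpha(q)\ge 1$ for every admissible $q$. For $q\ge\frac{2(n+1)}{n-1}$ one has $\alpha(q)=\frac{q(n-1)}{2n}\ge\frac{n+1}{n}$. For $2\le q\le\frac{2(n+1)}{n-1}$ one has $\alpha(q)=\frac{2q}{q+2}\ge 1$. So your condition $q\ge\frac{2n}{n-1}$ is attached to the wrong range, and the case $\alpha<1$ never occurs. Your treatment of it is harmless.
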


The proof of Theorem \ref{thm:UP-FR-on} relies on the restriction theorem for orthonormal systems established in \cite{FrankSabin2017Spectral}; see also \cite{FrankSabin2017RestrictionTheorem, RenZhang2024Improved}. This theorem provides the analytic mechanism that converts restriction estimates into quantitative uncertainty principles involving the Fourier ratio. We next establish analogues of \eqref{ineq:UP-FR-eig} for Schr\"odinger operators with singular potentials. We define the time independent Schr\"odinger operator $H_V$
\begin{align*}
    H_V=-\Delta_g+V,
\end{align*}
where $V$ is a real-valued singular potential. As usual, let $\mathcal{K}(M)$ be the set of all Kato class potentials $V$, where the potentials $V$ satisfy
\begin{align*}
    \lim_{\delta\to 0+} \sup_{x\in M} \int_{d_g (x, y)<\delta} |V(y)| W_n (d_g (x, y))\:dy=0,
\end{align*}
where
\begin{align}\label{setup-Wn}
    \begin{split}
        W_n (r)=\begin{cases}
            r^{2-n}, & \text{if } n\geq 3, \\
            \log (2+r^{-1}), & \text{if } n=2.
        \end{cases}
    \end{split}
\end{align}
Here, $d_g$ is the Riemannian distance and $dy$ is the Riemannian volume form on $(M, g)$. It is known that $L^s (M)\subset \mathcal{K}(M)\subset L^1 (M)$ for any $s>\frac{n}{2}$, and both $\mathcal{K}(M)$ and $L^{n/2}(M)$ have the same scaling properties and either one is not contained in the other. For background on Kato class potentials and their relationship with scaling-critical spaces, we refer the reader to \cite{Simon1982Semigroup}.

If $V\in \mathcal{K}(M)$, by \cite{BlairSireSogge2021Quasimode}, we know that $H_V$ is essentially self-adjoint and bounded from below. If $V\in L^{\frac{n}{2}}(M)$, by \cite{BlairHuangSireSogge2022UniformSobolev}, again, $H_V$ is essentially self-adjoint, bounded from below. Adding a uniform positive constant, we may assume that the $H_V$ is positive. Since $M$ is compact, the spectrum of $H_V$ is discrete as it is for $V\equiv 0$, i.e., $-\Delta_g$. If $V\in \mathcal{K}(M)$, it is also known that the associated eigenfunctions are continuous (see, e.g., \cite{LiYau1986Parabolic}, \cite{Sturm1993SchrodingerSemigroups}, \cite{Guneysu2012OnGeneralizedSchrodingerSemigroups}). We can then write
\begin{align*}
    H_V e_j^V=\tau_j^2 e_j^V,
\end{align*}
where the $e_j^V$ are the $L^2$-normalized eigenfunctions of $H_V$ associated with the eigenvalues $\tau_j^2$ with $0\leq \tau_1\leq \tau_2\leq \cdots$ and $\{\tau_j\}$ is discrete. It is known that $\{e_j^V\}$ is also an orthonormal basis on $L^2 (M)$. If $V\equiv 0$, $H_V=-\Delta_g$. Let $E_j^V$ be the projection operator associated with the eigenfunction $e_j^V$ and eigenvalue $\tau_j$, which is analogous to $E_j$ for the $V\equiv 0$ case, i.e., we write
\begin{align*}
    \mathds{1}_{[\lambda, \lambda+1]}(P)f=\sum_{\lambda_j\in [\lambda, \lambda+1]} E_jf,\quad \mathds{1}_{[\lambda, \lambda+1]}(\sqrt{H_V})f=\sum_{\tau_j\in [\lambda, \lambda+1]} E_j^Vf.
\end{align*}

The passage from the free Laplace-Beltrami operator to Schr\"odinger operators with singular potentials is highly nontrivial. In the scaling-critical regime $V\in L^{n/2}(M)$, the potential may exhibit singularities comparable to those arising in critical elliptic theory, while Kato class potentials allow even rougher local behavior. Nevertheless, recent spectral projection estimates for singular Schr\"odinger operators show that many of the harmonic analytic properties of the free Laplace-Beltrami operator persist in this setting. Our goal is to show that the corresponding uncertainty principles remain stable under these singular perturbations.

Using the spectral projection estimates in \cite{BlairSireSogge2021Quasimode} and \cite{BlairHuangSireSogge2022UniformSobolev}, we obtain the following analogues of \eqref{ineq:UP-FR-eig} for singular Schr\"odinger operators.

\begin{theorem}\label{thm:hv-Kato-or-Ln2}
    Let $(M, g)$ be a smooth compact Riemannian manifold without boundary with dimension $n\geq 2$. Let $V\in \mathcal{K}(M)\cup L^{\frac{n}{2}}(M)$. Suppose $\lambda\geq 1$, and $f$ is a nonzero function on $M$ such that
    \begin{align*}
        f=\mathds{1}_{[\lambda, \lambda+1]} (\sqrt{H_V})f,\quad \mathrm{supp}(f)\subset E\subset M.
    \end{align*}
    Let $S_1=\{j: \tau_j\in [\lambda, \lambda+1]\}$.
    \begin{enumerate}
        \item If $V\in \mathcal{K}(M)\cap L^{\frac{n}{2}}(M)$, then there is a uniform constant $C_V>0$ such that
        \begin{align}\label{ineq:UP-hv-Kato-and-Ln2}
            (\# S_1)^{\frac{1}{2}}|E|^{\frac{q-2}{2q}}\geq \frac{1}{C_V\lambda^{\sigma(q)}},\quad 2\leq q\leq \infty.
        \end{align}
        \item If $V\in L^{\frac{n}{2}}(M)$, then there is a uniform constant $C_V>0$ such that
        \begin{align}\label{ineq:UP-hv-Ln2}
            (\# S_1)^{\frac{1}{2}}|E|^{\frac{q-2}{2q}}\geq \frac{1}{C_V\lambda^{\sigma(q)}},\quad \text{when } \begin{cases}
                2<q\leq \frac{2n}{n-4}, & \text{if } n\geq 5, \\
                2<q<\infty, & \text{if } n\in \{3, 4\}.
            \end{cases}
        \end{align}
        \item There is a uniform constant $C_V>0$ such that
        \begin{align}\label{ineq:UP-hv-mixed}
            (\# S_1)|E|^{\frac{2}{n}}\geq \frac{1}{C_V \lambda}, \quad \text{where } \begin{cases}
                n\geq 3 \text{ and } V\in L^{\frac{n}{2}}(M), \text{ or} \\
                n=2 \text{ and } V\in \mathcal{K}(M).
            \end{cases}
        \end{align}
    \end{enumerate}
\end{theorem}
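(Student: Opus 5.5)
The plan is to follow the same uncertainty mechanism as in Theorem \ref{thm:UP-FR-on}. Here the spectral projection bounds for $H_V$ from \cite{BlairSireSogge2021Quasimode} and \cite{BlairHuangSireSogge2022UniformSobolev} will replace the orthonormal-system restriction theorem. The skeleton is a Hölder/Cauchy--Schwarz sandwich. Since $\mathrm{supp}(f)\subset E$, Hölder gives
\begin{align*}
    \|f\|_{L^2(M)}\leq |E|^{\frac{1}{2}-\frac{1}{q}}\|f\|_{L^q(M)} .
\end{align*}
Since $f=\mathds{1}_{[\lambda,\lambda+1]}(\sqrt{H_V})f$, we have $f=\sum_{j\in S_1}\langle f,e_j^V\rangle e_j^V$. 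We then bound $\|f\|_{L^q}$ above in terms of $\|f\|_{L^2}$, $\lambda^{\sigma(q)}$ and $(\#S_1)^{1/2}$. Dividing by $\|f\|_{L^2}\neq 0$ then gives the claimed inequalities.

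\textbf{Parts (1) and (2).} For the upper bound on $\|f\|_{L^q}$ I will invoke Sogge-type spectral cluster estimates for $H_V$:
\begin{align*}
    \|\mathds{1}_{[\lambda,\lambda+1]}(\sqrt{H_V})\|_{L^2\to L^q}\leq C_V\lambda^{\sigma(q)} .
\end{align*}
For $V\in\mathcal{K}(M)\cap L^{n/2}(M)$ this holds for the full range $2\leq q\leq\infty$ by \cite{BlairSireSogge2021Quasimode}. For $V\in L^{n/2}(M)$ it holds in the restricted range in \eqref{ineq:UP-hv-Ln2} by \cite{BlairHuangSireSogge2022UniformSobolev}. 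These estimates directly give
\begin{align*}
    \|f\|_{L^q}\leq C_V\lambda^{\sigma(q)}\|f\|_{L^2} ,
\end{align*}
and hence
\begin{align*}
    |E|^{\frac{q-2}{2q}}\geq \frac{1}{C_V\lambda^{\sigma(q)}} .
\end{align*}
Since $\#S_1\geq 1$ (as $f\neq 0$ lies in the cluster), multiplying by $(\#S_1)^{1/2}\geq 1$ yields \eqref{ineq:UP-hv-Kato-and-Ln2} and \eqref{ineq:UP-hv-Ln2}. This route never uses the factor $(\#S_1)^{1/2}$ in an essential way.

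A variant that keeps the form of \eqref{ineq:UP-FR-eig} would instead write
\begin{align*}
    \|f\|_{L^q}\leq \sum_{j\in S_1}|\langle f,e_j^V\rangle|\,\|e_j^V\|_{L^q} .
\end{align*}
One then uses the eigenfunction bound $\|e_j^V\|_q\leq C_V\lambda^{\sigma(q)}$, a special case of the cluster bound, together with Cauchy--Schwarz: $\sum_{j\in S_1}|\langle f,e_j^V\rangle|\leq(\#S_1)^{1/2}\|f\|_2$. This yields the stated inequality with $(\#S_1)^{1/2}$ appearing naturally. I will present this version to parallel Theorem \ref{thm:UP-FR-on}.

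\textbf{Part (3).} Here I need an $L^2\to L^{q}$ bound at the endpoint $q=\frac{2n}{n-2}$ for $n\geq3$. At that exponent $\frac{q-2}{2q}=\frac1n$, so the Hölder step gives $\|f\|_2\leq |E|^{1/n}\|f\|_{2n/(n-2)}$. For $n=2$ the corresponding exponent is large; I will use $q=\infty$. The claimed bound $(\#S_1)|E|^{2/n}\geq (C_V\lambda)^{-1}$ is the square of
\begin{align*}
    (\#S_1)^{1/2}|E|^{1/n}\geq (C_V\lambda^{1/2})^{-1} .
\end{align*}
So I need $\|e_j^V\|_{2n/(n-2)}\lesssim\lambda^{1/2}$, using $\sigma\bigl(\tfrac{2n}{n-2}\bigr)=\tfrac12$, which sits in the range $q\leq\tfrac{2n}{n-4}$ of \cite{BlairHuangSireSogge2022UniformSobolev}. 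For $n=2$ with $V\in\mathcal{K}(M)$, I take $q=\infty$ with $\sigma(\infty)=\tfrac12$. This gives
\begin{align*}
    (\#S_1)^{1/2}|E|^{1/2}\geq (C_V\lambda^{1/2})^{-1} ,
\end{align*}
which after squaring is exactly \eqref{ineq:UP-hv-mixed} with $|E|^{2/n}=|E|$.

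\textbf{Main obstacle.} The main obstacle is bookkeeping rather than analysis: matching the precise ranges of $q$ and hypotheses on $V$ in the cited spectral projection theorems to each part. Two points need care. First, the Kato-class estimates of \cite{BlairSireSogge2021Quasimode} must hold uniformly for all $2\leq q\leq\infty$, which is why part (1) intersects with $L^{n/2}$. Second, the $n=2$ case of part (3) uses Kato class alone. If the cited $L^\infty$ bound needs a different window width, I would first try absorbing the difference into $C_V$ by covering $[\lambda,\lambda+1]$ by boundedly many windows.
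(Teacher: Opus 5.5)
Your proof is correct, and for parts (1) and (2) the variant you intend to present matches the paper. That variant uses the triangle inequality in $L^q$, Cauchy--Schwarz over $S_1$, the eigenfunction case of \eqref{ineq:bss-cor.1.4} or \eqref{ineq:bhss-(1.10)}, and H\"older on $E$. The paper does the Cauchy--Schwarz step pointwise and then applies Minkowski in $L^{q/2}$, which is the same thing. Your first route, applying the cluster bound directly to $f$, is shorter and proves the stronger inequality $|E|^{\frac{q-2}{2q}}\geq (C_V\lambda^{\sigma(q)})^{-1}$. This shows the factor $(\#S_1)^{1/2}$ is not needed at all.

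The genuine difference is in part (3). The paper adapts \cite[Theorems 3.5 and 3.6]{IosevichMayeli2025ACHA}:
\begin{itemize}
\item it applies H\"older with exponents $\frac{2n}{n+2}$, $\frac{2n}{n-2}$ to $\sum_{j\in S_1}E_j^Vf$;
\item it integrates over $E$ to get $\|f\|_{L^{2n/(n+2)}(M)}\lesssim (\#S_1)|E|^{2/n}\lambda^{1/2}\|f\|_{L^2(M)}$;
\item it closes with the dual cluster bound $\|f\|_{L^2(M)}\lesssim \lambda^{1/2}\|f\|_{L^{2n/(n+2)}(M)}$.
\end{itemize}
You instead note that at $q=\frac{2n}{n-2}$ one has $\frac{q-2}{2q}=\frac1n$ and $\sigma(q)=\frac12$. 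So for $n\geq 3$, part (3) is just the square of part (2) at this exponent. This exponent is admissible for every $n\geq 3$: $\frac{2n}{n-2}<\frac{2n}{n-4}$ for $n\geq 5$, and $\frac{2n}{n-2}\in\{6,4\}$ for $n\in\{3,4\}$. For $n=2$, part (3) is the square of part (1) at $q=\infty$.

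Both routes rest on the same estimate at the same exponent; the paper uses it once directly and once by duality. Both arrive at the same inequality up to constants. Your route shows that (3) carries no information beyond (1)--(2). The paper's route only buys consistency with the finite-group template it imitates.

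Your hedge for $n=2$ is unnecessary. Since $\mathcal{K}(M)\subset L^1(M)=L^{n/2}(M)$ when $n=2$, the $q=\infty$ case of \eqref{ineq:bss-cor.1.4} with the unit window applies to every $V\in\mathcal{K}(M)$. This is exactly what the paper invokes there.
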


On manifolds with nonpositive or negative sectional curvature, spectral cluster estimates admit logarithmic improvements over the general case. Such gains originate in the improved geometry of the geodesic flow and the resulting improvements in wave propagation estimates. A natural question is whether these logarithmic spectral improvements also lead to stronger uncertainty principles. Our next result shows that this is indeed the case, both for the free Laplace-Beltrami operator and, conditionally, for singular Schr\"odinger operators.

Our next result gives logarithmically improved analogues of Theorem \ref{thm:hv-Kato-or-Ln2}.

\begin{theorem}\label{thm:UP-nonposit-curv}
    Let $(M, g)$ be a smooth compact Riemannian manifold without boundary with dimension $n\geq 2$ with nonpositive sectional curvatures. Suppose, for $V\in L^{\frac{n}{2}}(M)$,
    \begin{align*}
        f=\mathds{1}_{[\lambda, \lambda+(\log \lambda)^{-1}]} (\sqrt{H_V})f, \quad \lambda \geq 1,
    \end{align*}
    and $f$ is supported in $E\subset M$. Depending on the curvatures of $M$, we define $\delta_n=\delta_n(q)$ as
    \begin{align}\label{setup:delta-n}
        \begin{split}
            \delta_n (q)=\begin{cases}
            \sigma(q), & \text{if $M$ has nonpositive sectional curvatures everywhere and $2<q\leq \frac{2(n+1)}{n-1}$}, \\
            \frac{1}{2}, & \text{if $M$ has negative sectional curvatures everywhere and $2<q\leq \frac{2(n+1)}{n-1}$}, \\
            \frac{1}{2}, & \text{if $M$ has nonpositive sectional curvatures everywhere and $\frac{2(n+1)}{n-1}<q\leq \infty$}.
            \end{cases}
        \end{split}
    \end{align}
    and let
    \begin{align*}
        S_2^V=\{j: \tau_j \in [\lambda, \lambda+(\log \lambda)^{-1}]\}.
    \end{align*}
    \begin{enumerate}
        \item Let $V\equiv 0$. Suppose $M$ has either nonpositive sectional curvatures or negative sectional curvatures. Then there is a uniform constant $C>0$ such that
        \begin{align}\label{ineq:V=0-nonpositive}
            (\# S_2)^{\frac{1}{2}}|E|^{\frac{q-2}{2q}}\geq \frac{(\log \lambda)^{\delta_n (q)}}{C\lambda^{\sigma (q)}},\quad 2<q\leq \infty,
        \end{align}
        where $S_2=S_2^V$ when $V\equiv 0$.
        \item Let $V\in L^{\frac{n}{2}}(M)$.
            \begin{enumerate}
                \item If $M$ has nonpositive sectional curvatures, then there is a uniform constant $C_V>0$ such that
                \begin{align}\label{ineq:s2-sup-crit}
                    (\# S_2^V)^{\frac{1}{2}} |E|^{\frac{q-2}{2q}}\geq \frac{(\log (2+\lambda))^{\delta_n (q)}}{C_V \lambda^{\sigma(q)}}, \quad \text{when } \begin{cases}
                        \frac{2(n+1)}{n-1}<q\leq \frac{2n}{n-4}, & \text{for } n\geq 5, \\
                        \frac{2(n+1)}{n-1}<q<\infty, & \text{for } n\in \{3, 4\}.
                    \end{cases}
                \end{align}
                \item If $M$ has either nonpositive sectional curvatures or negative sectional curvatures, then there is a uniform constant $C_V>0$ such that, for $\delta_n$ as in \eqref{setup:delta-n},
                \begin{align}\label{ineq:s2-crit}
                    (\# S_2^V)^{\frac{1}{2}} |E|^{\frac{q-2}{2q}}\geq \frac{(\log (2+\lambda))^{\delta_n (q)}}{C_V \lambda^{\sigma(q)}}, \quad q=\frac{2(n+1)}{n-1}.
                \end{align}
            \end{enumerate}
        \item  Let $V\in \mathcal{K}(M) \cap L^{\frac{n}{2}}(M)$. Suppose $M$ has nonpositive sectional curvatures. Let $\epsilon'>0$ and $r>\frac{n}{2}$. For $\delta_n (q)$ as in \eqref{setup:delta-n}, we define $\kappa(q)$ by
        \begin{align}\label{setup-kappa}
            \kappa(q)=\begin{cases}
                \frac{1}{2}, & \text{if } n=2, 2<q\leq 6, V\in \mathcal{K}(M), \text{ and } M \text{ has negative curvatures}, \\
                \frac{1}{2}, & \text{if } n=2, 6<q\leq \infty, V\in \mathcal{K}(M), \text{ and } M \text{ has nonpositive curvatures}, \\
                \frac{1}{2}, & \text{if } n\geq 3, q=\frac{2(n+1)}{n-1}, V\in \mathcal{K}(M)\cap L^{\frac{n}{2}} (M), \text{ and } M \text{ has negative curvatures}, \\
                \frac{1}{2}-\epsilon', & \text{if } n\in \{3, 4\}, \frac{2n}{n-1}<q<\frac{2(n+1)}{n-1}, V\in \mathcal{K}(M)\cap L^{\frac{n}{2}} (M), \text{ and } M \text{ has negative curvatures}, \\
                \frac{1}{2}, & \text{if } n\in \{3, 4\}, \frac{2(n+1)}{n-1}<q<\infty, V\in \mathcal{K}(M)\cap L^{\frac{n}{2}}(M), \text{ and $M$ has nonpositive curvatures}, \\
                \frac{1}{2}-\epsilon', & \text{if } n\geq 5, \frac{2n}{n-1}<q<\frac{2(n+1)}{n-1}, V\in \mathcal{K}(M)\cap L^{\frac{n}{2}} (M), \text{ and $M$ has negative curvatures}, \\
                \frac{1}{2}, & \text{if } n\geq 5, \frac{2(n+1)}{n-1}<q\leq \frac{2n}{n-4}, V\in \mathcal{K}(M)\cap L^{\frac{n}{2}}(M), \text{ and $M$ has nonpositive curvatures}, \\
                \delta_n(q), & \text{if } n\geq 2, 2<q\leq \infty, \text{ and } V\in L^r (M).
            \end{cases}
        \end{align}
        Then there is a uniform constant $C_V>0$ such that we have
            \begin{align}\label{ineq:s2-all}
                (\# S_2^V)^{\frac{1}{2}} |E|^{\frac{q-2}{2q}}\geq \frac{(\log (2+\lambda))^{\kappa(q)}}{C_V \lambda^{\sigma(q)}}.
            \end{align}
    \end{enumerate}
\end{theorem}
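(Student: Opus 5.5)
The plan is the same duality/Hölder argument behind Theorem \ref{thm:UP-FR-on} and Theorem \ref{thm:hv-Kato-or-Ln2}, with the standard unit-window spectral projection bounds replaced by the log-improved bounds for shrinking windows $[\lambda,\lambda+(\log\lambda)^{-1}]$. The core inequality is elementary. Since $f=\mathds{1}_{[\lambda,\lambda+(\log\lambda)^{-1}]}(\sqrt{H_V})f$ is supported in $E$, Hölder gives
\begin{align*}
\|f\|_{L^2(M)}^2=\int_E |f|^2\le |E|^{\frac{q-2}{q}}\|f\|_{L^q(M)}^2 .
\end{align*}
Suppose $\chi_\lambda^V:=\mathds{1}_{[\lambda,\lambda+(\log\lambda)^{-1}]}(\sqrt{H_V})$ satisfies a bound $\|\chi_\lambda^V\|_{L^2\to L^q}\le C_V\lambda^{\sigma(q)}(\log(2+\lambda))^{-\kappa}$. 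Then $\|f\|_{L^q}\le C_V\lambda^{\sigma(q)}(\log\lambda)^{-\kappa}\|f\|_{L^2}$. Combining, and dividing by $\|f\|_2^2\neq0$, yields
\begin{align*}
|E|^{\frac{q-2}{2q}}\ge \frac{(\log(2+\lambda))^{\kappa}}{C_V\lambda^{\sigma(q)}}.
\end{align*}
Since $f\neq0$ forces $\#S_2^V\ge1$, multiplying the left side by $(\#S_2^V)^{1/2}\ge1$ only helps. So every statement reduces to citing the right improved $L^2\to L^q$ bound with the right log exponent.

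For part (1), $V\equiv0$, I would use the known log-improved spectral cluster estimates.
\begin{itemize}
\item On nonpositively curved manifolds with $q>\frac{2(n+1)}{n-1}$, Hassell--Tacy give the gain $(\log\lambda)^{-1/2}$, so $\delta_n=\frac12$.
\item For $2<q\le\frac{2(n+1)}{n-1}$ on nonpositively curved manifolds, I would use Blair--Sogge (and Blair--Huang--Sogge at the critical exponent). As stated in the theorem, the exponent there is $\delta_n=\sigma(q)$.
\item Under negative curvature, Blair--Huang--Sogge improve the low-$q$ range to the full $(\log\lambda)^{-1/2}$.
\end{itemize}
One subtlety: some of these are stated for windows of width $(\log\lambda)^{-1}$ only up to constants, or for smoothed projections. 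I would pass from the smoothed operator $\rho(T(\lambda-P))$ with $T\sim\log\lambda$ to the sharp indicator in the standard way. Since $\chi_\lambda=\chi_\lambda\rho(\cdots)^{-1}\rho(\cdots)$ on the range, the factor $\rho^{-1}$ is bounded on that window by a harmless constant.

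For parts (2) and (3), with singular $V$, I would invoke the perturbative results of Blair--Huang--Sire--Sogge and Huang--Sogge. These show that $\chi_\lambda^V$ inherits the log-improved bounds of $\chi_\lambda^0$ in the stated ranges:
\begin{itemize}
\item for $V\in L^{n/2}$, the supercritical range $q\le\frac{2n}{n-4}$ and the critical exponent, matching (2a) and (2b);
\item for $V\in\mathcal K\cap L^{n/2}$, the subcritical range, where only $\frac12-\epsilon'$ may be obtained, giving $\kappa(q)$;
\item for $V\in L^r$ with $r>n/2$, the full $\delta_n(q)$.
\end{itemize}
These proofs write $\chi^V_\lambda$ via the second resolvent formula, $(H_V-z)^{-1}=(H_0-z)^{-1}-(H_0-z)^{-1}V(H_V-z)^{-1}$. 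They control the perturbation using uniform Sobolev bounds, which require the restriction $q\le\frac{2n}{n-4}$, together with the Kato or $L^{n/2}$ smallness of $V$ at small scales.

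I expect the main obstacle to be the bookkeeping. For each case of \eqref{setup:delta-n} and \eqref{setup-kappa}, I have to match the curvature hypothesis, the potential class, and the $q$-range to a precise published log-improved estimate for $\chi_\lambda^V$ with the correct exponent. The places where only $\frac12-\epsilon'$ is available need particular care, as does the critical exponent $q=\frac{2(n+1)}{n-1}$, where the improvement comes from a different (bilinear/Kakeya–Nikodym) mechanism. Once these bounds are in hand, the inequalities \eqref{ineq:V=0-nonpositive}, \eqref{ineq:s2-sup-crit}, \eqref{ineq:s2-crit} and \eqref{ineq:s2-all} follow immediately from the Hölder argument above.
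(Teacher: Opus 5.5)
Your argument is correct. Its reduction step differs from the paper's and is sharper.

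\textbf{The reduction step.} The paper proves every part by rerunning the argument for \eqref{ineq:UP-hv-Kato-and-Ln2}:
\begin{itemize}
\item expand $f=\sum_{j\in S_2^V}\langle f,e_j^V\rangle e_j^V$;
\item apply Cauchy--Schwarz pointwise in $j$, which is where the factor $(\#S_2^V)^{1/2}$ enters;
\item apply H\"older on $E$, then Minkowski in $L^{q/2}$;
\item finish with $\|e_j^V\|_{L^q(M)}\lesssim \lambda^{\sigma(q)}(\log\lambda)^{-\kappa}$ for each eigenfunction in the window.
\end{itemize}
You instead apply H\"older on $E$ and then the $L^2\to L^q$ projection bound directly to $f=\mathds{1}_{[\lambda,\lambda+(\log\lambda)^{-1}]}(\sqrt{H_V})f$.

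The trade-off is as follows. The paper's route uses only eigenfunction bounds but pays the factor $\#S_2^V$. Yours uses the full projection bound, which is exactly what all the cited estimates and Proposition \ref{prop:hv-Kato-and-Ln2} provide. In return you get $|E|^{\frac{q-2}{2q}}$ bounded below by the same logarithmically improved quantity with no cluster-count factor at all. So the $(\#S_2^V)^{1/2}$ in the statement is superfluous. The eigenfunction expansion only pays off in Theorem \ref{thm:UP-FR-on}, where the Frank--Sabin orthonormal-system bound exploits it.

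\textbf{The analytic input for part (3).} For parts (1), (2a) and (2b) your sources agree with the paper's. For part (3), however, two families of log-window bounds are not available off the shelf:
\begin{itemize}
\item for $V\in\mathcal K(M)\cap L^{n/2}(M)$ below the critical exponent, including $n=2$ with $2<q\le 6$;
\item for $V\in L^r(M)$ with $r>\frac n2$, on all of $2<q\le\infty$.
\end{itemize}
Blair--Huang--Sire--Sogge's perturbative arguments give the log gain only for $q\ge\frac{2(n+1)}{n-1}$ (and $q\ge 6$ when $n=2$), and Huang--Sogge treats $V\equiv0$. In the paper these bounds are Proposition \ref{prop:hv-Kato-and-Ln2}, which needs its own proof in \S\ref{sec:Pf-Prop-hv-Kato-and-Ln2}. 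That proof does not go through the second resolvent identity. It uses a Duhamel comparison of smoothed log-window projections for $\sqrt{H_V}$ and $\sqrt{-\Delta_g}$, following Huang--Wang--Zhang. This is split into five regimes of $(\lambda_j,\tau_k)$ and combined with the free bounds of Hassell--Tacy and Huang--Sogge and with rescaled oscillatory-integral estimates.

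Also, the $\frac12-\epsilon'$ loss occurs only for $n\ge 3$ and $\frac{2n}{n-1}<q<\frac{2(n+1)}{n-1}$; for $n=2$ the full $\frac12$ is obtained. For part (3) you should cite Proposition \ref{prop:hv-Kato-and-Ln2}. With that input, your argument goes through unchanged.
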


We note that, compared to recent work for uncertainty principles on compact Riemannian manifolds (see, e.g., \cite{IosevichMayeliWyman2024Uncertainty, IosevichMayeliWyman2026Spectral, IosevichPark2026UP-singular-potentials}, and references therein), Theorem \ref{thm:UP-FR-on}-\ref{thm:UP-nonposit-curv} do not assume homogeneity on manifolds and do not contain an assumption on a ``tubular neighborhood of $E$''. The results in \cite{IosevichMayeliWyman2024Uncertainty, IosevichMayeliWyman2026Spectral, IosevichPark2026UP-singular-potentials} exploit additional geometric structure, such as homogeneity assumptions or concentration along tubular neighborhoods. By contrast, Theorem \ref{thm:UP-FR-on}--Theorem \ref{thm:UP-nonposit-curv} require neither hypothesis. In this sense, the present paper provides a complementary restriction-theoretic approach to uncertainty principles on compact manifolds.

As we said, the estimates of the form
\begin{align}\label{est:bss-bhss}
    \|\mathds{1}_{[\lambda, \lambda+\epsilon(\lambda)]} (\sqrt{H_V})\|_{L^q (M)\to L^2 (M)},\quad \lambda\geq 1,\quad \lambda^{-1}\leq \epsilon(\lambda)\leq 1
\end{align}
have been studied in \cite{BlairSireSogge2021Quasimode} and \cite{BlairHuangSireSogge2022UniformSobolev}. Specifically, the case $\epsilon(\lambda)\equiv 1$ is considered in \cite{BlairSireSogge2021Quasimode} for $V\in \mathcal{K}(M)\cap L^{\frac{n}{2}}(M)$, and the various cases of $\lambda^{-1}\leq \epsilon(\lambda)\leq 1$ for $V\in L^{\frac{n}{2}} (M)$ are considered in \cite{BlairHuangSireSogge2022UniformSobolev}. The estimates \eqref{est:bss-bhss} have been studied by using the uniform Sobolev estimates. For the proofs of \eqref{ineq:s2-sup-crit} and \eqref{ineq:s2-crit}, we shall use the estimates considered in \cite{BlairSireSogge2021Quasimode} and \cite{BlairHuangSireSogge2022UniformSobolev}.

For the proof of \eqref{ineq:s2-all}, we briefly review submanifold analogues of \eqref{est:bss-bhss}, which are of the form
\begin{align}\label{est:hv-restriction}
    \|\mathds{1}_{[\lambda, \lambda+\epsilon(\lambda)]} (\sqrt{H_V})\|_{L^q (\Sigma)\to L^2 (M)},\quad \lambda\geq 1,\quad \lambda^{-1}\leq \epsilon(\lambda)\leq 1,
\end{align}
where $\Sigma$ is a submanifold of $M$. These estimates \eqref{est:hv-restriction} are considered in \cite{BlairPark2025LqEstimates} for $V\in L^{\frac{n}{2}}(M)$ by using the uniform Sobolev (trace) estimates (see also \cite{BlairPark2026Resolvent} as related work), and considered in \cite{HuangWangZhang2026restriction} for $V\in \mathcal{K}(M) \cap L^{\frac{n}{2}} (M)$ by the perturbation of the wave kernel using Duhamel's principle. Using a perturbative comparison argument based on Duhamel's principle, together with the logarithmic spectral cluster estimates for the free Laplace-Beltrami operator obtained in \cite{HassellTacy2015improvement, HuangSogge2025Curvature}, one can transfer the logarithmic improvement from $\sqrt{-\Delta_g}$ to $\sqrt{H_V}$ when $V\in \mathcal{K}(M)\cap L^{\frac{n}{2}}(M)$.

\begin{proposition}\label{prop:hv-Kato-and-Ln2}
     Let $(M, g)$ be a smooth compact Riemannian manifold without boundary with dimension $n\geq 2$. Suppose $(M, g)$ has nonpositive sectional curvatures everywhere. Let $V\in \mathcal{K}(M) \cap L^{\frac{n}{2}} (M)$. Then there is a uniform constant $C_V>0$ such that
    \begin{align*}
        \|\mathds{1}_{[\lambda, \lambda+(\log \lambda)^{-1}]} (\sqrt{H_V}) f\|_{L^q (M)}\leq C_V\frac{\lambda^{\sigma (q)}}{(\log \lambda)^{\kappa(q)}} \|f\|_{L^2 (M)},
    \end{align*}
    where $\kappa(q)$ is as in \eqref{setup-kappa}.
\end{proposition}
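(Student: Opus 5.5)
The strategy is a perturbative comparison of the spectral projectors of $\sqrt{H_V}$ and $P=\sqrt{-\Delta_g}$. We transfer the known logarithmically improved cluster bounds for $P$ (Hassell--Tacy, Huang--Sogge) to $H_V$. The main tool is the Duhamel/wave-kernel approach of \cite{HuangWangZhang2026restriction} and \cite{BlairSireSogge2021Quasimode}. Set $\epsilon=\epsilon(\lambda)=(\log\lambda)^{-1}$ and fix an even Schwartz function $\rho$ with $\rho(0)=1$ and $\hat\rho$ supported in $[-1,1]$. It suffices to prove the quasimode estimate
\begin{align*}
    \|u\|_{L^q(M)}\leq C_V\,\frac{\lambda^{\sigma(q)}}{(\log\lambda)^{\kappa(q)}}\Big(\|u\|_{L^2(M)}+\lambda^{-1}\epsilon^{-1}\|(H_V-\lambda^2+i\epsilon\lambda)u\|_{L^2(M)}\Big),
\end{align*}
applied to $u=\mathds{1}_{[\lambda,\lambda+\epsilon]}(\sqrt{H_V})f$. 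For such $u$ the second term on the right is $\lesssim\|f\|_{L^2}$. For $P$ itself, this quasimode estimate with the exponent $\delta_n(q)$ (and $\frac12-\epsilon'$ near the lower end of the subcritical range in the negatively curved case) is exactly the content of \cite{HassellTacy2015improvement, HuangSogge2025Curvature}. Those papers obtain it by writing $\rho(T(\lambda-P))$ with $T\approx\log\lambda$ and using the wave kernel on the universal cover up to times $c\log\lambda$.

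The steps are as follows. First, write $u=\rho(T(\lambda-\sqrt{H_V}))u+(I-\rho(T(\lambda-\sqrt{H_V})))u$. The second piece is handled by the $\epsilon(\lambda)\equiv1$ cluster bounds of \cite{BlairSireSogge2021Quasimode}, summed dyadically over unit windows. The decay of $1-\rho$ away from $\lambda$ then gives a bound $\lambda^{\sigma(q)}\epsilon^{1/2}\|f\|_2$. This is acceptable, since $\epsilon^{1/2}=(\log\lambda)^{-1/2}$. Second, for the main piece, express $\rho(T(\lambda-\sqrt{H_V}))$ through $\cos(t\sqrt{H_V})$ for $|t|\le T$. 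Duhamel's principle then gives
\begin{align*}
    \cos t\sqrt{H_V}-\cos tP=-\int_0^t \frac{\sin (t-s)P}{P}\,V\cos s\sqrt{H_V}\,ds.
\end{align*}
This splits the main piece into the free operator $\rho(T(\lambda-P))$ applied to $u$, plus an error. The free part is bounded by the improved estimate for $P$. Third, control the error term by writing it as $\sum_k$ of free operators localized to windows $[k,k+1]$ applied to $Vu$. We use the $L^{q'}\to L^q$ (or $L^{(n/2)}\to L^q$ via $L^{n/2}$) resolvent-type bounds for $P$ on nonpositively curved manifolds. The Kato condition supplies $\|Vu\|$ control through $\|u\|_{L^\infty}$-type or $L^{2n/(n-2)}$ bounds combined with the $L^{n/2}$ norm, as in \cite{BlairSireSogge2021Quasimode}. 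The case $V\in L^r$, $r>n/2$, is easier: the extra integrability produces a power gain $\lambda^{-\delta}$ in the error, so $\kappa=\delta_n$ survives intact.

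The main obstacle is the third step. The Duhamel error involves times up to $T\sim\log\lambda$, and naive bounds lose a factor $T$ or $e^{CT}$ from the potential. We would first try to split $V=V_{\le N}+V_{>N}$, where $\|V_{>N}\|_{\mathcal K}$ and $\|V_{>N}\|_{L^{n/2}}$ are small and $V_{\le N}$ is bounded. The small part is absorbed into the left-hand side using uniform Sobolev estimates with $\epsilon=(\log\lambda)^{-1}$, following \cite{BlairHuangSireSogge2022UniformSobolev}. The bounded part costs only $\lambda^{-1}T\|V_{\le N}\|_\infty$ relative to the main term, which is negligible. This absorption is presumably where $\epsilon'$ is lost in the range $\frac{2n}{n-1}<q<\frac{2(n+1)}{n-1}$, since there the free improved bound is available only with $\frac12-\epsilon'$ loss or needs interpolation with the critical exponent $q=\frac{2(n+1)}{n-1}$. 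We would treat that range by interpolating between the critical-exponent result and the trivial $L^2$ bound.
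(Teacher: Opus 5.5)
Your overall strategy (a Duhamel comparison with the free log-improved bounds) matches the paper's. However, the proposal omits the one estimate everything hinges on, and the substitutes you offer either lose outright or give a provably smaller exponent than $\kappa(q)$. Once the time integral in Duhamel's formula is evaluated exactly, the critical part of the error has $\tau_k$ in the $\epsilon$-window and $\lambda_j$ ranging over $[0,3\lambda]$. Its multiplier is essentially $-\rho(T(\lambda-\tau_k))/(\lambda_j^2-\tau_k^2)$; this is the paper's Case 2. Suppose you do what your third step says: cut $\lambda_j$ into unit windows $[\lambda+k,\lambda+k+1]$, and bound each piece by free cluster bounds, H\"older with $\|V\|_{L^{n/2}}$, and the known bound $\|u\|_{L^{2n/(n-2)}}\lesssim \lambda^{1/2}(\log\lambda)^{-1/2}\|f\|_2$. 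Then the $k$-th piece is $\lesssim \lambda^{\sigma(q)}\cdot (\lambda k)^{-1}\cdot \lambda^{1/2}\cdot \lambda^{1/2}(\log\lambda)^{-1/2}\|f\|_2$. Summing over $1\le k\le\lambda$ gives $\lambda^{\sigma(q)}(\log\lambda)^{1/2}\|f\|_2$, a full factor $\log\lambda$ worse than the target. You must instead keep the $\lambda_j$-sum together as a resolvent and prove $\|(-\Delta_g-(\lambda+i\epsilon)^2)^{-1}\psi_1(P/\lambda)\|_{L^{2n/(n+2)}\to L^q}\lesssim \lambda^{\sigma(q)-1/2}(\log\lambda)^{\epsilon'}$, with $\epsilon=(\log\lambda)^{-1}$ and $\psi_1(P/\lambda)$ a cutoff to frequencies $\lesssim\lambda$ (or a variant on the line $\frac{n}{p}-\frac{n}{q}=2$). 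Invoking uniform Sobolev estimates at $\epsilon=(\log\lambda)^{-1}$ as in \cite{BlairHuangSireSogge2022UniformSobolev} does not supply this in the new range. The available bounds concern the pair $(\frac{2n}{n+2},\frac{2n}{n-2})$ and supercritical $q$, which is exactly why that work stops at $q>\frac{2(n+1)}{n-1}$. Absorbing $V_{>N}$ at exponent $q$ needs the same missing bound. Proving it for $n=2$, $2<q<6$, and for $n\ge 3$, $\frac{2n}{n-1}<q<\frac{2(n+1)}{n-1}$, is the real content of the paper's proof. It splits the wave representation of the resolvent into four parts: $T_0$; local pieces $T_j$ with $2^j\le\lambda$, handled by the Shao--Yao parametrix $S_j+W_j$, rescaling, and Carleson--Sj\"olin bounds interpolated onto the uniform Sobolev line; long-time pieces with $\lambda<2^j\le\lambda/\epsilon$; and the tail $R_\lambda$. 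Each long-time piece is $O(\lambda^{\sigma(q)-1/2})$, so their sum costs $\log\log\lambda$, and this is the true source of $\epsilon'$. For $n=2$ the long-time part is instead controlled by the negative-curvature kernel bound $\lambda^{-1/2+\delta_0}$.

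Your fallback for the subcritical range, interpolating the critical-exponent bound with the trivial $L^2$ bound, cannot close the gap. Riesz--Thorin gives $\lambda^{\sigma(q)}(\log\lambda)^{-\theta/2}$ with $\theta=(n+1)(\frac12-\frac1q)<1$ fixed. For example, this is only $(\log\lambda)^{-(n+1)/(4n)}$ at $q=\frac{2n}{n-1}$, and $(\log\lambda)^{-\frac32(\frac12-\frac1q)}$ for $n=2$, $2<q<6$. So neither $\kappa=\frac12-\epsilon'$ for every $\epsilon'>0$ nor $\kappa=\frac12$ is reached. Relatedly, on negatively curved manifolds the free bound already has the full $(\log\lambda)^{-1/2}$ gain for $2<q\le\frac{2(n+1)}{n-1}$, so the $\epsilon'$ is not inherited from it. Finally, your first step is misjustified: $1-\rho$ does not decay away from $\lambda$. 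On the spectral support $[\lambda,\lambda+\epsilon]$ of $u$ it vanishes only at $\tau=\lambda$ and has size $\approx\min(1,(T\epsilon)^2)$, which is a constant when $T\approx\log\lambda$. Unit-window bounds therefore give only $\lambda^{\sigma(q)}\|f\|_2$ for that piece. The standard repair is to take $T=c\log\lambda$ with $c$ small, so that $\rho(T(\lambda-\tau))\ge\frac12$ on the window. Then write $u=\rho(T(\lambda-\sqrt{H_V}))\tilde u$ with $\|\tilde u\|_2\le 2\|f\|_2$. This places the whole burden on the Duhamel error discussed above.
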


We shall use Proposition \ref{prop:hv-Kato-and-Ln2} to prove \eqref{ineq:s2-all}. Proposition \ref{prop:hv-Kato-and-Ln2} is of independent interest, since it transfers logarithmic spectral cluster improvements from the Laplace--Beltrami operator to singular Schr\"odinger operators with potentials in $\mathcal K(M)\cap L^{n/2}(M)$. For completeness, we provide a proof of Proposition \ref{prop:hv-Kato-and-Ln2} in \S \ref{sec:Pf-Prop-hv-Kato-and-Ln2} by combining perturbative arguments based on Duhamel's principle in \cite{HuangWangZhang2026restriction} with the logarithmic spectral cluster estimates in \cite{HuangSogge2025Curvature, HassellTacy2015improvement}.

\begin{remark}[Fourier ratio] The significance of Theorem \ref{thm:UP-FR-on} is that it strengthens the basic uncertainty principle whenever the Fourier ratio is small. The quantity $\FR_{2\alpha(q)}(f)$ measures spectral concentration, and stronger concentration leads to a larger lower bound in \eqref{ineq:UP-FR-eig}.
    We note that, for $f=\mathds{1}_{[\lambda, \lambda+1]}(P)$,
    \begin{align}\label{ineq:FR-2-alpha}
        \FR_{2\alpha(q)}(f)\leq 1.
    \end{align}
    Indeed, by a standard property of $\ell^p$ spaces, we know $\ell^p \subset \ell^q$ when $0<p\leq q\leq \infty$. We also know that, by the definition of $\alpha(q)$ in \eqref{setup:sigma-alpha}, we have $\alpha(q)\geq 1$ for $2\leq q\leq \infty$. It follows that
    \begin{align*}
        \FR_{2\alpha(q)} (f)=\frac{\left(\sum_{j\in J} |\langle f, f_j \rangle_{L^2 (M)}|^{2\alpha (q)} \right)^{\frac{1}{2\alpha(q)}} }{\|f\|_{L^2 (M)}}\leq 1.
    \end{align*}
    This and \eqref{ineq:UP-FR-eig} imply
    \begin{align}\label{ineq:FR-better}
        (\# S)^{\frac{1}{2}} |E|^{\frac{q-2}{2q}}\geq \frac{1}{C\lambda^{\sigma(q)}}\cdot \frac{1}{\FR_{2\alpha(q)} (f)}\geq \frac{1}{C\lambda^{\sigma(q)}},\quad 2\leq q\leq \infty.
    \end{align}
    The lower bound can be larger if $\FR_{2\alpha(q)}(f)$ is of the form $B(\lambda)$, where $B(\lambda)$ is monotonically decreasing in $\lambda$, e.g., $B(\lambda)=\lambda^{-\alpha'}$ for some $\alpha'>0$, or $B(\lambda)=(\log \lambda)^{-\beta'}$ for some $\beta'>0$, etc.
\end{remark}

We shall prove Theorem \ref{thm:UP-FR-on} in \S \ref{sec:pf-of-thm:UP-FR-on}, Theorem \ref{thm:hv-Kato-or-Ln2} in \S \ref{sec:pf-of-thm:hv-Kato-or-Ln2}, Theorem \ref{thm:UP-nonposit-curv} in \S \ref{sec:pf-of-thm:UP-nonposit-curv}, and Proposition \ref{prop:hv-Kato-and-Ln2} in \S \ref{sec:Pf-Prop-hv-Kato-and-Ln2}. In \S \ref{sec:recovery}, we illustrate signal recovery on manifolds in the sense of \cite{DonohoStark1989} by using uncertainty principles in this paper. For notation, we write $A\lesssim B$ if $A\leq CB$ for a uniform constant $C$ depending on $M$, $n=\dim M$, or other fixed parameters, but independent of the frequency $\lambda\geq 1$. The notation $A\approx B$ means $A\lesssim B$ and $B\lesssim A$.

\section{Proof of Theorem \ref{thm:UP-FR-on}}\label{sec:pf-of-thm:UP-FR-on}

The argument follows the strategy of the proof of \cite[Theorem 3.5]{IosevichMayeli2025ACHA}, adapted to the setting of spectral clusters on compact manifolds. To make use of the proof of \cite[Theorem 3.5]{IosevichMayeli2025ACHA}, we shall use the following theorem \cite[Theorem 2]{FrankSabin2017Spectral}, which can be thought of as a manifold analogue of the restriction theorems discussed in \cite{IosevichMayeli2025ACHA}.

\begin{theorem}[\cite{FrankSabin2017Spectral}]\label{FS:thm2}
    Let $(M, g)$ be a smooth compact Riemannian manifold of dimension $n\geq 2$ without boundary. For any $\lambda\geq 1$, let $\Pi_\lambda=\mathds{1}_{[\lambda, \lambda+1)} (\sqrt{-\Delta_g})$ be the spectral projection of $-\Delta_g$ onto the spectral cluster $\Pi_\lambda L^2 (M)$. Then there is a $C>0$ such that for any orthonormal system $(f_j)_{j\in J}\subset \Pi_\lambda L^2 (M)$, for any $(\nu_j)_{j\in J}\subset \mathbb{C}$ and any $\lambda\geq 1$, we have
    \begin{align*}
        \left\|\sum_{j\in J} \nu_j |f_j|^2 \right\|_{L^{q/2}(M)}\leq C\lambda^{2\sigma(p)} \left(\sum_{j\in J} |\nu_j|^{\alpha (q)} \right)^{\frac{1}{\alpha (q)}}.
    \end{align*}
\end{theorem}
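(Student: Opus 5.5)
The plan is to prove the estimate in its dual Schatten form and to obtain it by interpolation between the trivial endpoints $q=2$ and $q=\infty$ and the critical exponent $q_c=\frac{2(n+1)}{n-1}$. At $q_c$ the work is a Stein-type analytic interpolation built on the oscillatory kernel of a smoothed spectral projector. (Here $\sigma(p)$ in the display is read as $\sigma(q)$.)

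\textbf{Reduction to a Schatten bound.} First I will pass from $\Pi_\lambda$ to a smoothed projector. Choose $\chi\in\mathcal{S}(\mathbb{R})$ with $\hat\chi$ supported in $(-\epsilon,\epsilon)$, where $\epsilon$ is below the injectivity radius, and with $|\chi|\geq c>0$ on $[0,1]$. Set $\chi_\lambda=\chi(P-\lambda)$. For an orthonormal system $(f_j)\subset\Pi_\lambda L^2(M)$, put $g_j=\chi_\lambda^{-1}f_j$, which is well defined on the cluster. Then $f_j=\chi_\lambda g_j$, and the operator $\gamma=\sum_j\nu_j|g_j\rangle\langle g_j|$ has $\|\gamma\|_{\mathfrak{S}^{\alpha}}\lesssim\|\nu\|_{\ell^\alpha}$, because $\chi_\lambda^{-1}\Pi_\lambda$ is bounded uniformly in $\lambda$. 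The density $\sum_j\nu_j|f_j|^2$ is the diagonal $\rho_{\chi_\lambda\gamma\chi_\lambda^*}$. By the Frank–Sabin duality principle, the bound $\|\rho_{\chi_\lambda\gamma\chi_\lambda^*}\|_{L^{q/2}}\lesssim\lambda^{2\sigma(q)}\|\gamma\|_{\mathfrak{S}^{\alpha(q)}}$ is equivalent to
\begin{align*}
\|W\chi_\lambda\chi_\lambda^*\overline{W}\|_{\mathfrak{S}^{\alpha(q)'}}\lesssim\lambda^{2\sigma(q)}\|W\|_{L^{2(q/2)'}(M)}^2 .
\end{align*}
So it suffices to prove this operator estimate.

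\textbf{Endpoints.} The endpoints are elementary.
\begin{itemize}
\item[(i)] At $q=2$ we have $\alpha(2)=1$ and $\sigma(2)=0$, and $\|\sum\nu_j|f_j|^2\|_{L^1}\leq\sum|\nu_j|$ trivially.
\item[(ii)] At $q=\infty$ we have $\alpha(\infty)=\infty$ and $2\sigma(\infty)=n-1$. Then $|\sum\nu_j|f_j(x)|^2|\leq\|\nu\|_{\ell^\infty}\sum_j|f_j(x)|^2\leq\|\nu\|_{\ell^\infty}\Pi_\lambda(x,x)$, and the local Weyl law gives $\Pi_\lambda(x,x)\lesssim\lambda^{n-1}$.
\end{itemize}
Once the bound at $q_c$ is proved, complex interpolation of the density map $\gamma\mapsto\rho_\gamma$ between $(\mathfrak{S}^1,L^1)$, $(\mathfrak{S}^{\alpha(q_c)},L^{q_c/2})$ and $(\mathfrak{S}^\infty,L^\infty)$ yields exactly the piecewise $(\sigma(q),\alpha(q))$ in \eqref{setup:sigma-alpha}. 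This is just a check that both formulas are affine in $1/q$ on each side of $q_c$, with $\alpha(q_c)=\frac{n+1}{n}$.

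\textbf{Critical exponent (main step).} Here I will use the Hadamard parametrix for $\chi_\lambda\chi_\lambda^*=\rho(P-\lambda)$, where $\rho=|\chi|^2$. Its kernel is, up to $O(\lambda^{-N})$,
\begin{align*}
\lambda^{\frac{n-1}{2}}\sum_\pm a_\pm(x,y,\lambda)\,\frac{e^{\pm i\lambda d_g(x,y)}}{d_g(x,y)^{\frac{n-1}{2}}},\qquad d_g(x,y)\gtrsim\lambda^{-1},
\end{align*}
and it is bounded by $\lambda^{n-1}$ on the diagonal region. I then embed this kernel in an analytic family $T_z$ by multiplying it by $(\lambda d_g(x,y))^{z}$, suitably normalized by a factor $e^{z^2}$ and Gamma factors, in the spirit of Stein's oscillatory integral theorem and its use by Frank–Sabin.
\begin{itemize}
\item[(a)] On $\operatorname{Re}z=\frac{n-1}{2}$ the kernel is bounded by $\lambda^{n-1}$. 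Hence $\|W_1T_zW_2\|_{\mathfrak{S}^2}\lesssim\lambda^{n-1}\|W_1\|_{L^2}\|W_2\|_{L^2}$.
\item[(b)] On $\operatorname{Re}z=-1$ the multiplier of $T_z$ is bounded on $L^2$, since the symbol in the $\lambda$-variable stays bounded. Hence $\|W_1T_zW_2\|_{\mathfrak{S}^\infty}\lesssim\|W_1\|_{L^\infty}\|W_2\|_{L^\infty}$.
\end{itemize}
Interpolating at $z=0$ gives $\mathfrak{S}^{n+1}$ with $W\in L^{n+1}$ and power $\lambda^{2\sigma(q_c)}=\lambda^{\frac{n-1}{n+1}}$. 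One checks that $\alpha(q_c)'=n+1$ and $2(q_c/2)'=n+1$, which is exactly the required statement. This endpoint is where I expect the real difficulty. It is the usual obstruction in this argument: proving the $L^2$ bound on the line $\operatorname{Re}z=-1$ requires a version of the Carleson–Sjölin/Stein oscillatory integral estimate with variable phase $d_g(x,y)$, uniformly in the family. I would first try to handle it locally in coordinates by dyadically decomposing in $d_g(x,y)\approx2^k\lambda^{-1}$, rescaling each piece, and using the nondegeneracy (Carleson–Sjölin condition) of the Riemannian distance function. If the full analytic family proves troublesome, the fallback is to interpolate with a Schatten-valued Keel–Tao style argument instead.
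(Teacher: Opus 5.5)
The paper does not prove this statement. It quotes it from \cite{FrankSabin2017Spectral}, and $\sigma(p)$ there is a typo for $\sigma(q)$, as you assumed. Most of your reduction is correct: inverting $\chi(P-\lambda)$ on the cluster, the duality principle, the endpoints $q=2$ and $q=\infty$, and complex interpolation of $\gamma\mapsto\rho_{\Pi_\lambda\gamma\Pi_\lambda}$ do reproduce exactly the piecewise $(\sigma(q),\alpha(q))$.

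\textbf{The gap.} The critical bound $\|W\chi_\lambda\chi_\lambda^*\overline{W}\|_{\mathfrak{S}^{n+1}}\lesssim\lambda^{\frac{n-1}{n+1}}\|W\|_{L^{n+1}}^2$ is the entire content of the theorem, and your analytic family does not establish it.
\begin{itemize}
\item \emph{The stated bounds give the wrong exponent.} With $\theta=\frac{2}{n+1}$, a kernel bound $\lambda^{n-1}$ on $\operatorname{Re}z=\frac{n-1}{2}$ and an $O(1)$ operator bound on $\operatorname{Re}z=-1$ interpolate to $\lambda^{\frac{2(n-1)}{n+1}}$, twice the required power. With the weight $(\lambda d_g)^z$ you actually need $\|W_1T_zW_2\|_{\mathfrak{S}^\infty}\lesssim\lambda^{-1}\|W_1\|_{L^\infty}\|W_2\|_{L^\infty}$ on $\operatorname{Re}z=-1$. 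Equivalently, you need $O(1)$ there with the weight $d_g^z$.
\item \emph{Nothing justifies that bound.} $T_z$ is defined by multiplying a kernel, so it is not a function of $P$ and has no ``symbol'' to bound.
\item \emph{The bound hides a cross-scale cancellation.} In the Euclidean model, the shell $d_g\approx 2^{-k}$ of $\rho(P-\lambda)$ has $L^2$ norm $\approx 2^{-k}$. After weighting, each of the $\approx\log\lambda$ shells therefore has norm $\approx\lambda^{-1}$. The unnormalized member at $z=-1$ has norm $\approx\lambda^{-1}\log\lambda$. Only the Gamma normalization reduces this to $\lambda^{-1}$, through cancellation across scales that is visible via the Fourier transform. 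On a general manifold you have no mechanism for that cancellation.
\item \emph{The fallback does not close it.} Dyadic decomposition in $d_g$ plus rescaling gives exactly $\lambda^{\frac{n-1}{n+1}}$ per shell, with no decay in $k$. That is precisely the $\log\lambda$ endpoint loss the analytic interpolation was meant to avoid. The Keel--Tao alternative is left unspecified.
\end{itemize}

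\textbf{A route that closes the endpoint.} This follows the spirit of Frank--Sabin, who deduce the result from Schatten bounds for Carleson--Sj\"olin oscillatory integral operators.
\begin{enumerate}
\item Take $\hat\chi$ supported in $[\epsilon/2,\epsilon]$ rather than around $0$; for small $\epsilon$ you still have $|\chi|\geq c$ on $[0,1]$. Then, modulo $O(\lambda^{-N})$ and a partition of unity, the kernel of $\chi_\lambda$ is $\lambda^{\frac{n-1}{2}}a_\lambda(x,y)e^{i\lambda d_g(x,y)}$, with $a_\lambda$ smooth and supported where $d_g\approx\epsilon$. There is then no diagonal region and there are no shells.
\item Freezing $y_n$ in local coordinates gives $\chi_\lambda\chi_\lambda^*=\lambda^{n-1}\int T_sT_s^*\,ds$. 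Here each $T_s:L^2(\mathbb{R}^{n-1})\to L^2(\mathbb{R}^n)$ has a Carleson--Sj\"olin phase.
\item For $WT_sT_s^*\overline{W}$, place the analytic parameter on the output variable $x_n$: multiply the kernel by a normalized $|x_n-x_n'|^{z}$.
\begin{itemize}
\item On $\operatorname{Re}z=-1$ the operator norm is $O(\lambda^{-(n-1)})$. This follows from H\"ormander's $L^2$ bound for the nondegenerate $(n-1)$-dimensional slices, together with $L^2(\mathbb{R})$-boundedness of convolution with the normalized $|t|^{-1+is}$.
\item On $\operatorname{Re}z=\frac{n-1}{2}$ the kernel is $O(\lambda^{-\frac{n-1}{2}})$, by the stationary-phase decay $(1+\lambda|x-x'|)^{-\frac{n-1}{2}}$.
\end{itemize}
\item Schatten-class Stein interpolation then gives $\lambda^{-\frac{n(n-1)}{n+1}}\|W\|_{L^{n+1}}^2$ in $\mathfrak{S}^{n+1}$. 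The triangle inequality in $\mathfrak{S}^{n+1}$ over $s$ yields $\lambda^{n-1}\lambda^{-\frac{n(n-1)}{n+1}}=\lambda^{\frac{n-1}{n+1}}$, as required.
\end{enumerate}
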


We now prove Theorem \ref{thm:UP-FR-on}. Since $f=\sum_{j\in J} \langle f, f_j \rangle_{L^2 (M)} f_j$, by H\"older's inequality,
\begin{align*}
    |f(y)|&\leq \sum_{j\in J} 1\cdot |\langle f, f_j \rangle_{L^2 (M)} f_j(y)|\\
    &\leq \left(\sum_{j\in J} 1^2 \right)^{\frac{1}{2}}\left(\sum_{j \in J} |\langle f, f_j \rangle_{L^2 (M)}|^2 |f_j (y)|^2 \right)^{\frac{1}{2}}=(\# J)^{\frac{1}{2}} \left(\sum_{j \in J} |\langle f, f_j \rangle_{L^2 (M)}|^2 |f_j (y)|^2 \right)^{\frac{1}{2}}.
\end{align*}
By this, H\"older's inequality, and Theorem \ref{FS:thm2}, since $f$ is supported in $E\subset M$, if $q>2$, then
\begin{align*}
    \|f\|_{L^2 (M)}^2=\int_E |f(y)|^2\:dy&\leq (\# J) \int_E 1\cdot \sum_{j \in J} |\langle f, f_j \rangle_{L^2 (M)}|^2 |f_j (y)|^2\:dy \\
    &\leq (\# J) \left(\int_E 1^{\frac{q}{q-2}}\:dy \right)^{\frac{q-2}{q}} \left(\int_E \left|\sum_{j \in J} |\langle f, f_j \rangle_{L^2 (M)}|^2 |f_j (y)|^2 \right|^{\frac{q}{2}}\:dy \right)^{\frac{2}{q}}\\
    &=(\# J) |E|^{\frac{q-2}{q}} \left\| \sum_{j \in J} |\langle f, f_j \rangle_{L^2 (M)}|^2 |f_j|^2 \right\|_{L^{\frac{q}{2}}(E)} \\
    &\leq (\# J) |E|^{\frac{q-2}{q}} \left\| \sum_{j \in J} |\langle f, f_j \rangle_{L^2 (M)}|^2 |f_j|^2 \right\|_{L^{\frac{q}{2}}(M)} \\
    &\lesssim (\# J)|E|^{\frac{q-2}{q}} \lambda^{2\sigma(q)} \left(\sum_{j\in J} |\langle f, f_j\rangle_{L^2(M)}|^{2\alpha (q)} \right)^{\frac{1}{\alpha(q)}},
\end{align*}
and thus,
\begin{align*}
    \|f\|_{L^2 (M)}\lesssim (\# J)^{\frac{1}{2}} |E|^{\frac{q-2}{2q}} \lambda^{\sigma(q)} \left(\sum_{j\in J} |\langle f, f_j\rangle_{L^2(M)}|^{2\alpha (q)} \right)^{\frac{1}{2\alpha(q)}}.
\end{align*}
Hence,
\begin{align*}
    (\# J)^{\frac{1}{2}} |E|^{\frac{q-2}{2q}}&\gtrsim \frac{1}{\lambda^{\sigma(q)}}\cdot \frac{\|f\|_{L^2 (M)}}{\left(\sum_{j\in J} |\langle f, f_j \rangle_{L^2 (M)}|^{2\alpha (q)} \right)^{\frac{1}{2\alpha(q)}} },
\end{align*}
proving \eqref{ineq:UP-FR-on}. The endpoint case $q=2$ follows by the same argument, noting that the factor involving $|E|$ disappears.

\section{Proof of Theorem \ref{thm:hv-Kato-or-Ln2}}\label{sec:pf-of-thm:hv-Kato-or-Ln2}

\subsection{\texorpdfstring{$V\in \mathcal{K}(M) \cap L^{\frac{n}{2}}(M)$}{V in K(M) and L(n/2)(M)} cases}
We prove \eqref{ineq:UP-hv-Kato-and-Ln2} in this subsection. The proof of \eqref{ineq:UP-hv-Kato-and-Ln2} follows the same general strategy as the proof of Theorem \ref{thm:UP-FR-on}, combined with spectral projection estimates for singular Schr\"odinger operators. Since $f=\mathds{1}_{[\lambda, \lambda+1]}(\sqrt{H_V})f$, we can write
\begin{align*}
    f(y)=\sum_{\tau_j\in [\lambda, \lambda+1]} \langle f, e_j^V\rangle_{L^2(M)}e_j^V(y).
\end{align*}
Since $S_1=\{j: \tau_j\in [\lambda, \lambda+1]\}$, we can write, by H\"older's inequality,
\begin{align}\label{ineq:|f|-3.1}
    |f(y)|\leq (\# S_1)^{\frac{1}{2}}\left(\sum_{\tau_j\in [\lambda, \lambda+1]} |\langle f, e_j^V\rangle_{L^2 (M)}|^2 |e_j^V(y)|^2 \right)^{\frac{1}{2}}.
\end{align}
Recall that by \cite[Corollary 1.4]{BlairSireSogge2021Quasimode}
\begin{align}\label{ineq:bss-cor.1.4}
    \left\|\sum_{\tau_j\in [\lambda, \lambda+1]} E_j^V f \right\|_{L^q (M)}\lesssim \lambda^{\sigma(q)} \|f\|_{L^2 (M)},\quad 2\leq q\leq \infty,\quad V\in \mathcal{K}(M)\cap L^{\frac{n}{2}}(M).
\end{align}
By \eqref{ineq:|f|-3.1}-\eqref{ineq:bss-cor.1.4}, H\"older's inequality, triangle inequality (i.e., Minkowski's inequality), and orthogonality, since $f$ is supported in $E\subset M$, if $q>2$ (the case $q=2$ is trivial as above), we have
\begin{align*}
    \|f\|_{L^2 (M)}^2&\leq (\# S_1)\int_E 1\cdot \sum_{\tau_j\in [\lambda, \lambda+1]} |\langle f, e_j^V \rangle_{L^2 (M)}|^2 |e_j^V (y)|^2\:dy \\
    &\leq (\# S_1) |E|^{\frac{q-2}{q}} \left\| \sum_{\tau_j \in [\lambda, \lambda+1]} |\langle f, e_j^V \rangle_{L^2 (M)}|^2 |e_j^V|^2 \right\|_{L^{\frac{q}{2}}(M)} \\
    &\leq (\# S_1)|E|^{\frac{q-2}{q}} \sum_{\tau_j\in [\lambda, \lambda+1]} \left\| |\langle f, e_j^V\rangle_{L^2 (M)}|^2 \|e_j^V\|^2 \right\|_{L^{\frac{q}{2}}(M)} \\
    &=(\# S_1)|E|^{\frac{q-2}{q}} \sum_{\tau_j\in [\lambda, \lambda+1]} |\langle f, e_j^V\rangle_{L^2 (M)}|^2 \|e_j^V\|_{L^q (M)}^2 \\
    &\lesssim (\# S_1)|E|^{\frac{q-2}{q}}\cdot \lambda^{2\sigma(q)} \sum_{\tau_j\in [\lambda, \lambda+1]} |\langle f, e_j^V\rangle_{L^2 (M)}|^2 \\
    &=(\# S_1)|E|^{\frac{q-2}{q}}\cdot \lambda^{2\sigma(q)}\|f\|_{L^2 (M)}^2.
\end{align*}
Dividing both sides by $\|f\|_{L^2 (M)}^2$, we obtain \eqref{ineq:UP-hv-Kato-and-Ln2}.

\subsection{\texorpdfstring{$V\in L^{\frac{n}{2}}(M)$ and $n\geq 3$}{V in L(n/2)(M) cases}}
We show \eqref{ineq:UP-hv-Ln2} in this subsection. Recall that by \cite[(1.10)]{BlairHuangSireSogge2022UniformSobolev}
\begin{align}\label{ineq:bhss-(1.10)}
    \left\|\sum_{\tau_k\in [\lambda, \lambda+1]} E_k^V f \right\|_{L^q (M)}\lesssim \lambda^{\sigma(q)} \|f\|_{L^2 (M)},\quad V\in L^{\frac{n}{2}}(M), \quad \text{where } \begin{cases}
        2<q\leq \frac{2n}{n-4}, & \text{if } n\geq 5, \\
        2<q<\infty, & \text{if } n\in \{3, 4\}.
    \end{cases}
\end{align}
Replacing \eqref{ineq:bss-cor.1.4} by \eqref{ineq:bhss-(1.10)} in the proof of \eqref{ineq:UP-hv-Kato-and-Ln2} immediately yields \eqref{ineq:UP-hv-Ln2}.

\subsection{\texorpdfstring{$V\in \mathcal{K}(M)$ and $n=2$, or $V\in L^{\frac{n}{2}}(M)$ and $n\geq 3$}{Mixed cases}}
We prove \eqref{ineq:UP-hv-mixed} in this subsection. The argument combines ideas from the proofs of \cite[Theorem 3.5]{IosevichMayeli2025ACHA} and \cite[Theorem 3.6]{IosevichMayeli2025ACHA}, adapted to the spectral decomposition associated with $H_V$. By hypothesis, we write
\begin{align*}
    f=\mathds{1}_{[\lambda, \lambda+1]}(\sqrt{H_V})f=\sum_{\tau_j\in [\lambda, \lambda+1]} E_j^Vf.
\end{align*}
We focus on the case $n\geq 3$, since the two-dimensional argument is entirely analogous. By H\"older's inequality,
\begin{align*}
    |f(y)|&\leq \sum_{\tau_j \in [\lambda, \lambda+1]}1\cdot |E_j^V f(y)| \\
    &\leq \left(\sum_{j\in S_1} 1^{\frac{2n}{n+2}} \right)^{\frac{n+2}{2n}} \left(\sum_{\tau_j \in [\lambda, \lambda+1]} |E_j^V f(y)|^{\frac{2n}{n-2}} \right)^{\frac{n-2}{2n}}=(\# S_1)^{\frac{n+2}{2n}} \left(\sum_{\tau_j \in [\lambda, \lambda+1]} |E_j^V f(y)|^{\frac{2n}{n-2}} \right)^{\frac{n-2}{2n}},
\end{align*}
and so,
\begin{align*}
    |f(y)|^{\frac{2n}{n+2}}\leq (\# S_1)\left(\sum_{\tau_j\in [\lambda, \lambda+1]} |E_j^V f(y)|^{\frac{2n}{n-2}} \right)^{\frac{n-2}{n+2}},\quad y\in M.
\end{align*}
Since $f$ is supported in $E\subset M$, integrating both sides over $M$, we have, by H\"older's inequality,
\begin{align*}
    \|f\|_{L^{\frac{2n}{n+2}}(M)}^{\frac{2n}{n+2}}=\|f\|_{L^{\frac{2n}{n+2}}(E)}^{\frac{2n}{n+2}}&\leq (\# S_1)\int_E 1\cdot \left(\sum_{\tau_j \in [\lambda, \lambda+1]} |E_j^V f(y)|^{\frac{2n}{n-2}} \right)^{\frac{n-2}{n+2}}\:dy \\
    &\leq (\# S_1)\left(\int_E 1^{\frac{n+2}{4}}\:dy \right)^{\frac{4}{n+2}}\left(\int_E \sum_{\tau_j\in [\lambda, \lambda+1]} |E_j^V f(y)|^{\frac{2n}{n-2}}\:dy \right)^{\frac{n-2}{n+2}},
\end{align*}
and so,
\begin{align}\label{ineq:f-2n/(n+2)(M)}
    \begin{split}
        \|f\|_{L^{\frac{2n}{n+2}}(M)}&\leq (\# S_1)^{\frac{n+2}{2n}}\left(|E|^{\frac{4}{n+2}} \right)^{\frac{n+2}{2n}} \left(\int_E \sum_{\tau_j\in [\lambda, \lambda+1]} |E_j^V f(y)|^{\frac{2n}{n-2}}\:dy \right)^{\frac{n-2}{2n}} \\
        &=(\# S_1)^{\frac{n+2}{2n}} |E|^{\frac{2}{n}} \left(\sum_{\tau_j \in [\lambda, \lambda+1]} \int_E |E_j^V f(y)|^{\frac{2n}{n-2}}\:dy \right)^{\frac{n-2}{2n}} \\
        &=(\# S_1)^{\frac{n+2}{2n}} |E|^{\frac{2}{n}} \left(\sum_{\tau_j \in [\lambda, \lambda+1]} \|E_j^V f\|_{L^{\frac{2n}{n-2}}(E)}^{\frac{2n}{n-2}} \right)^{\frac{n-2}{2n}} \\
        &\leq (\# S_1)^{\frac{n+2}{2n}} |E|^{\frac{2}{n}} \left(\sum_{\tau_j \in [\lambda, \lambda+1]} \|E_j^V f\|_{L^{\frac{2n}{n-2}}(M)}^{\frac{2n}{n-2}} \right)^{\frac{n-2}{2n}}.
    \end{split}
\end{align}
On the other hand, replacing $f$ by $E_j^Vf$ in \eqref{ineq:bhss-(1.10)} for $n\geq 3$ (if $n=2$, we use \eqref{ineq:bss-cor.1.4} with $q=\infty$), we know
\begin{align}
    \|E_j^V f\|_{L^{\frac{2n}{n-2}}(M)}\lesssim \lambda^{\sigma\left(\frac{2n}{n-2}\right)}\|E_j^V f\|_{L^2 (M)}.
\end{align}
Also, we know, by orthogonality, for $\tau_j\in [\lambda, \lambda+1]$,
\begin{align}\label{ineq:EjV-L2(M)}
    \|E_j^V f\|_{L^2 (M)}^2\leq \sum_{\tau_k\in [\lambda, \lambda+1]} \|E_k^V f\|_{L^2 (M)}^2=\|f\|_{L^2(M)}^2,\quad \text{i.e., } \|E_j^V f\|_{L^2 (M)}\leq \|f\|_{L^2 (M)}.
\end{align}
Combining these \eqref{ineq:f-2n/(n+2)(M)}-\eqref{ineq:EjV-L2(M)} together yields
\begin{align}\label{ineq:L-2n/(n+2)norm-f}
    \begin{split}
        \|f\|_{L^{\frac{2n}{n+2}}(M)}&\lesssim (\# S_1)^{\frac{n+2}{2n}} |E|^{\frac{2}{n}}\cdot \lambda^{\sigma\left(\frac{2n}{n-2} \right)}\left(\sum_{\tau_j\in [\lambda, \lambda+1]}1 \right)^{\frac{n-2}{2n}}\cdot \|f\|_{L^2 (M)} \\
        &=(\# S_1)|E|^{\frac{2}{n}}\cdot \lambda^{\sigma\left(\frac{2n}{n-2} \right)} \|f\|_{L^2 (M)}.
    \end{split}
\end{align}
By duality and \eqref{ineq:bhss-(1.10)}, we know that
\begin{align*}
    \|f\|_{L^2 (M)}\lesssim \lambda^{\sigma\left(\frac{2n}{n-2} \right)}\|f\|_{L^{\frac{2n}{n+2}}(M)},\quad \text{for } f=\mathds{1}_{[\lambda, \lambda+1]}(\sqrt{H_V})f.
\end{align*}
By this and \eqref{ineq:L-2n/(n+2)norm-f}, we have
\begin{align*}
    \|f\|_{L^{\frac{2n}{n+2}}(M)}\lesssim (\# S_1)|E|^{\frac{2}{n}}\cdot \lambda^{2\sigma\left(\frac{2n}{n-2} \right)} \|f\|_{L^{\frac{2n}{n+2}} (M)}=(\# S_1)|E|^{\frac{2}{n}}\cdot \lambda^1 \|f\|_{L^{\frac{2n}{n+2}} (M)}.
\end{align*}
Dividing both sides by $\|f\|_{L^{\frac{2n}{n+2}}(M)}$, we obtain \eqref{ineq:UP-hv-mixed}.

\section{Proof of Theorem \ref{thm:UP-nonposit-curv}}\label{sec:pf-of-thm:UP-nonposit-curv}

\begin{figure}[h]
\centering

\begin{tikzpicture}[scale=1]

% Main spectral axis
\draw[thick,->] (0,0) -- (12,0);
\node at (12.4,0) {$\lambda$};

% Standard window
\draw[fill=gray!25] (2,-0.25) rectangle (4,0.25);
\node at (3,0.7) {unit window};
\node at (3,-0.7) {$[\lambda,\lambda+1]$};

% Arrow
\draw[->,thick] (4.5,0) -- (6,0);

% Logarithmic window
\draw[fill=gray!55] (7,-0.25) rectangle (7.8,0.25);
\node at (7.4,0.7) {shrinking window};
\node at (7.4,-0.9)
{$[\lambda,\lambda+(\log\lambda)^{-1}]$};

% Improvement arrow
\draw[->,thick] (7.4,-1.5) -- (7.4,-2.6);

% Gain text
\node at (7.4,-3.2)
{$(\log\lambda)^{\delta_n(q)}$ gain};

% Curvature note
\node at (7.4,-4.1)
{nonpositive / negative curvature};

\end{tikzpicture}

\caption{Logarithmically shrinking spectral windows yield improved uncertainty principles on manifolds with nonpositive or negative sectional curvature.}
\label{fig:logwindow}
\end{figure}

\subsection{Vanishing potential cases}
Let $V\equiv 0$. Combining the results in \cite{HuangSogge2025Curvature} (see also \cite{BlairSogge2019logarithmic}, \cite{BlairHuangSogge2022Improved}, \cite{CanzaniGalkowski2023APDE} and see the references therein, as related work) and \cite{HassellTacy2015improvement}, we know
\begin{align}\label{ineq:hs-ht-main}
    \|\mathds{1}_{[\lambda, \lambda+(\log \lambda)^{-1}]}(P)f\|_{L^{\frac{2(n+1)}{n-1}}(M)} \lesssim \frac{\lambda^{\frac{n-1}{2(n+1)}}}{(\log (2+\lambda))^{\delta_n}} \|f\|_{L^2 (M)}.
\end{align}
Combining \eqref{ineq:hs-ht-main} with the argument used in the proof of \eqref{ineq:UP-hv-Kato-and-Ln2} yields \eqref{ineq:V=0-nonpositive}.

\subsection{\texorpdfstring{$V\in L^{\frac{n}{2}}(M)$ and $n\geq 3$ for super-critical exponents}{Super-critical exponent cases}}
Let $V\in L^{\frac{n}{2}}(M)$. The estimate \eqref{ineq:s2-sup-crit} follows by combining the proof of \eqref{ineq:UP-hv-Kato-and-Ln2} with the logarithmically improved spectral projection bounds
\begin{align*}
    \|\mathds{1}_{[\lambda, \lambda+(\log \lambda)^{-1}]}(\sqrt{H_V})f\|_{L^q (M)}
    \lesssim
    \frac{\lambda^{\sigma(q)}}{(\log (2+\lambda))^{1/2}}
    \|f\|_{L^2(M)},
\end{align*}
valid when
\begin{align*}
\begin{cases}
\frac{2(n+1)}{n-1}<q\leq \frac{2n}{n-4}, & \text{if } n\geq 5, \\
\frac{2(n+1)}{n-1}<q<\infty, & \text{if } n=3,4,
\end{cases}
\end{align*}
which follow from \cite[(1.20)]{BlairHuangSireSogge2022UniformSobolev}.

\subsection{\texorpdfstring{$V\in L^{\frac{n}{2}}(M)$ and $n\geq 3$ for critical exponents}{Critical exponent cases}}
Let $V\in L^{\frac{n}{2}}(M)$. Using \eqref{ineq:hs-ht-main} and the arguments in \cite[Section 3]{BlairHuangSireSogge2022UniformSobolev}, we have
\begin{align*}
   \|\mathds{1}_{[\lambda, \lambda+(\log \lambda)^{-1}]}(\sqrt{H_V})f\|_{L^q (M)} \lesssim \frac{\lambda^{\sigma(q)}}{(\log (2+\lambda))^{\delta_n}}\|f\|_{L^2 (M)},\quad \text{when } q=\frac{2(n+1)}{n-1}.
\end{align*}
Combining this estimate with the argument used in the proof of \eqref{ineq:UP-hv-Kato-and-Ln2} yields \eqref{ineq:s2-crit}.

\subsection{\texorpdfstring{$V\in \mathcal{K}(M)\cap L^{\frac{n}{2}}(M)$ or $V\in L^r(M)$}{Critically or less singular potential cases}}
Let $V\in \mathcal{K}(M) \cap L^{\frac{n}{2}}(M)$ or $V\in L^r$ with $r>\frac{n}{2}$. Combining Proposition \ref{prop:hv-Kato-and-Ln2} with the argument used in the proof of \eqref{ineq:UP-hv-Kato-and-Ln2} yields \eqref{ineq:s2-all}.

\section{Recovery from incomplete spectral data}\label{sec:recovery}

One of the classical applications of uncertainty principles is to the problem of recovery
from incomplete frequency data. In Euclidean and finite settings, this circle of ideas goes
back to Donoho and Stark \cite{DonohoStark1989}, who showed that sufficiently sparse
signals can be recovered uniquely even when some Fourier coefficients are missing. The
underlying mechanism is simple and robust: if two signals agree on the observed frequency
region, then their difference is simultaneously concentrated in physical space and supported
in the missing frequency region. An uncertainty principle then forces the difference to vanish
identically.

In the finite abelian group setting, this philosophy was developed further in
\cite{IosevichMayeli2025ACHA}, where restriction estimates were used to improve the
classical uncertainty principle and consequently enlarge the range of admissible missing
frequencies. The purpose of this section is to show that the uncertainty principles proved
above yield analogous recovery statements in the spectral setting on compact manifolds.

\begin{figure}[h]
\centering

\begin{tikzpicture}[scale=1]

% Spectral axis
\draw[thick,->] (0,0) -- (12,0);
\node at (12.4,0) {$\lambda_j$};

% Observed frequencies
\foreach \x in {0.7,1.4,2.1,2.8,3.5,4.2,8.0,8.7,9.4,10.1,10.8,11.5}
{
\filldraw[black] (\x,0) circle (2pt);
}

% Missing cluster
\draw[fill=gray!30] (5,-0.25) rectangle (7,0.25);

% Dots inside cluster
\foreach \x in {5.3,5.8,6.3,6.8}
{
\filldraw[black] (\x,0) circle (2pt);
}

% Labels
\node at (6,-0.8) {missing spectral data};
\node at (2.5,0.6) {observed};
\node at (9.5,0.6) {observed};

% Arrow downward
\draw[->,thick] (6,-1.1) -- (6,-2.3);

% Spatial support blob
\filldraw[fill=gray!20] (5.2,-3)
.. controls (5.0,-3.5) and (5.7,-3.8) ..
(6.2,-3.5)
.. controls (6.8,-3.2) and (6.5,-2.7) ..
(5.8,-2.6)
.. controls (5.3,-2.5) and (5.0,-2.7) ..
(5.2,-3);

\node at (8.5,-3) {$|\operatorname{supp}(h)|\leq 2\eta$};

% Final implication
\draw[->,thick] (6,-3.9) -- (6,-4.8);

\node at (6,-5.3)
{uncertainty principle $\Rightarrow h=0$};

\end{tikzpicture}

\caption{Recovery from incomplete spectral data. Agreement outside a missing spectral cluster forces the difference function to be spectrally localized. The uncertainty principle then prevents simultaneous spectral and spatial concentration.}
\label{fig:recovery}
\end{figure}

The role of the missing frequency set is now played by a spectral cluster. Let
\[
P=\sqrt{-\Delta_g},
\]
and denote by
\[
\Pi_I=\mathds{1}_I(P)
\]
the spectral projection associated with an interval
\[
I\subset [0,\infty).
\]
If
\[
f=\sum_j \langle f,e_j\rangle e_j,
\]
we interpret the coefficients
\[
\langle f,e_j\rangle,
\qquad
\lambda_j\notin I,
\]
as the observed spectral data, while the coefficients corresponding to
\[
\lambda_j\in I
\]
are regarded as missing.

The recovery mechanism is based on the following elementary observation.

\begin{lemma}[Donoho--Stark mechanism in the spectral setting]
\label{lem:DS-manifold}
Let $f,g\in L^2(M)$ and let $I\subset [0,\infty)$.
Suppose that
\[
\langle f,e_j\rangle
=
\langle g,e_j\rangle
\qquad
\text{whenever }
\lambda_j\notin I.
\]
Then
\[
h:=f-g
\]
satisfies
\[
h=\Pi_I h.
\]
In particular, $h$ is spectrally localized in the interval $I$.
\end{lemma}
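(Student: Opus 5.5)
The plan is to work entirely at the level of the orthonormal eigenbasis expansion and exploit the linearity of the inner product. Since $\{e_j\}$ is an orthonormal basis of $L^2(M)$ and $h=f-g\in L^2(M)$, we have the expansion
\[
h=\sum_j \langle h,e_j\rangle e_j ,
\]
with convergence in $L^2(M)$. Linearity gives $\langle h,e_j\rangle=\langle f,e_j\rangle-\langle g,e_j\rangle$, so the hypothesis says that $\langle h,e_j\rangle=0$ whenever $\lambda_j\notin I$. Hence the expansion collapses to
\[
h=\sum_{\lambda_j\in I}\langle h,e_j\rangle e_j .
\]

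Next I will identify this sum with $\Pi_I h$. By the functional calculus for the self-adjoint operator $P=\sqrt{-\Delta_g}$, which has discrete spectrum with eigenbasis $\{e_j\}$, the spectral projection acts diagonally:
\[
\mathds{1}_I(P)h=\sum_j \mathds{1}_I(\lambda_j)\langle h,e_j\rangle e_j=\sum_{\lambda_j\in I}\langle h,e_j\rangle e_j .
\]
This is consistent with the convention $\mathds{1}_{[\lambda,\lambda+1]}(P)f=\sum_{\lambda_j\in[\lambda,\lambda+1]}E_jf$ used earlier in the paper. Comparing the two displays gives $h=\Pi_I h$.

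The only point requiring care is convergence. If $I$ is unbounded, the sum may be infinite, but it still converges in $L^2$ by Bessel/Parseval. If $I$ is bounded, Weyl's law makes the sum finite. In either case no genuine obstacle arises. The final remark, that $h$ is spectrally localized in $I$, is then just a restatement of $h=\Pi_I h$.
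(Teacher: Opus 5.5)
Your proof is correct and matches the paper's argument: expand $h$ in the eigenbasis, use linearity to see that $\langle h,e_j\rangle=0$ for $\lambda_j\notin I$, and identify the remaining sum with $\Pi_I h$. Your extra remarks on the functional calculus and $L^2$ convergence are fine; for bounded $I$, finiteness of the sum already follows from the spectrum being discrete with finite multiplicities, so Weyl's law is not needed.
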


\begin{proof}
We have
\[
h=\sum_j \langle h,e_j\rangle e_j,
\]
where
\[
\langle h,e_j\rangle
=
\langle f,e_j\rangle
-
\langle g,e_j\rangle.
\]
By assumption,
\[
\langle h,e_j\rangle=0
\qquad
\text{whenever }
\lambda_j\notin I.
\]
Hence
\[
h
=
\sum_{\lambda_j\in I}
\langle h,e_j\rangle e_j
=
\Pi_I h,
\]
as claimed.
\end{proof}

The uncertainty principles proved in the previous sections imply that a function with
sufficiently small support cannot be spectrally concentrated inside a narrow spectral window.
Combining this observation with Lemma \ref{lem:DS-manifold} yields uniqueness of recovery.

We begin with the free Laplace--Beltrami case.

\begin{definition}
Let $\eta>0$ and let $I\subset [0,\infty)$.
We say that a function
\[
f\in L^2(M)
\]
is uniquely recoverable among functions supported on sets of measure at most $\eta$
from spectral data outside $I$ if the following holds:

whenever
\[
g\in L^2(M)
\]
satisfies
\[
|\operatorname{supp}(f)|\leq \eta,
\qquad
|\operatorname{supp}(g)|\leq \eta,
\]
and
\[
\langle f,e_j\rangle
=
\langle g,e_j\rangle
\qquad
\text{for every }
\lambda_j\notin I,
\]
then
\[
f=g.
\]
\end{definition}

We now apply \eqref{ineq:UP-FR-eig}.

\begin{theorem}[Recovery from a missing spectral cluster]
\label{thm:recovery-main}
Let $(M,g)$ be a smooth compact Riemannian manifold without boundary of dimension
$n\geq 2$.
Let
\[
I_\lambda=[\lambda,\lambda+1],
\qquad
\lambda\geq 1,
\]
and define
\[
\mathcal C_\lambda
=
\{j:\lambda_j\in I_\lambda\}.
\]

Suppose that
\[
2<q\leq \infty.
\]
Then there exists a constant $C>0$ such that if
\begin{equation}\label{eq:recovery-condition-main}
(\# \mathcal C_\lambda)^{\frac12}
(2\eta)^{\frac{q-2}{2q}}
<
\frac{1}{C\lambda^{\sigma(q)}},
\end{equation}
every function
\[
f\in L^2(M)
\]
with
\[
|\operatorname{supp}(f)|\leq \eta
\]
is uniquely recoverable among functions supported on sets of measure at most $\eta$
from spectral data outside $I_\lambda$.
\end{theorem}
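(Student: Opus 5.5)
The proof is the Donoho--Stark argument by contradiction, with \eqref{ineq:UP-FR-eig} (in its cruder form \eqref{ineq:FR-better}) as the uncertainty principle. We take $C$ to be the constant of Theorem \ref{thm:UP-FR-on}. Suppose $f,g\in L^2(M)$ satisfy $|\operatorname{supp}(f)|\leq\eta$ and $|\operatorname{supp}(g)|\leq\eta$, and their coefficients agree for all $\lambda_j\notin I_\lambda$. Set $h=f-g$. By Lemma \ref{lem:DS-manifold}, $h=\Pi_{I_\lambda}h$, so
\[
h=\sum_{j\in\mathcal C_\lambda}\langle h,e_j\rangle e_j .
\]
Moreover $h$ is supported in $E:=\operatorname{supp}(f)\cup\operatorname{supp}(g)$, and $|E|\leq 2\eta$.

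Assume $h\neq 0$. We apply Theorem \ref{thm:UP-FR-on} with $J=\mathcal C_\lambda$ and $f_j=e_j$. These form a finite orthonormal system, and since $\lambda_j\in[\lambda,\lambda+1]$ they lie in $\Pi_\lambda L^2(M)$; the reproducing identity for $h$ is exactly the displayed expansion. Combining \eqref{ineq:UP-FR-eig} with $\FR_{2\alpha(q)}(h)\leq1$ from \eqref{ineq:FR-2-alpha} (this uses $\alpha(q)\geq1$ and the $\ell^2\subset\ell^{2\alpha}$ embedding), we get
\[
(\#\mathcal C_\lambda)^{1/2}|E|^{\frac{q-2}{2q}}\geq \frac{1}{C\lambda^{\sigma(q)}} .
\]
Since $q>2$, the exponent $\frac{q-2}{2q}$ is positive, so $|E|\leq 2\eta$ gives $|E|^{\frac{q-2}{2q}}\leq (2\eta)^{\frac{q-2}{2q}}$. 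Hence the left side of \eqref{eq:recovery-condition-main} is at least $1/(C\lambda^{\sigma(q)})$, contradicting the hypothesis. Therefore $h=0$, i.e.\ $f=g$. At $q=\infty$ the exponent is $1/2$ and the same reasoning applies.

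The main point to check is the endpoint bookkeeping in invoking Theorem \ref{thm:UP-FR-on}. Its statement uses the half-open cluster $[\lambda,\lambda+1)$ in Theorem \ref{FS:thm2} but the closed window $I_\lambda=[\lambda,\lambda+1]$ in the recovery statement. To handle eigenvalues equal to $\lambda+1$, I would note that the Frank--Sabin bound holds equally for the closed window: cover it by two half-open clusters, or absorb the change into the constant, since $\mathds 1_{[\lambda,\lambda+1]}(P)$ has the same $L^2\to L^q$ bounds up to a constant. A further minor point is that Theorem \ref{thm:UP-FR-on} requires $\operatorname{supp}h\subset E$, which holds because $h$ vanishes almost everywhere off $E$. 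Nothing else is needed.
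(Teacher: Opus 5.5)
Your proposal is correct and follows the paper's proof essentially step by step. You form $h=f-g$, use Lemma~\ref{lem:DS-manifold} to get $h=\Pi_{I_\lambda}h$, apply \eqref{ineq:UP-FR-eig} together with $\FR_{2\alpha(q)}(h)\leq 1$, and contradict \eqref{eq:recovery-condition-main} via $|\operatorname{supp}(h)|\leq 2\eta$.

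Your remark about the closed window $[\lambda,\lambda+1]$ versus the half-open cluster in Theorem~\ref{FS:thm2} addresses a point the paper passes over silently. Splitting off the eigenvalues equal to $\lambda+1$, which lie in the next half-open cluster at the harmless cost $(\lambda+1)^{\sigma(q)}\lesssim\lambda^{\sigma(q)}$, handles it correctly.
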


\begin{proof}
Suppose that
\[
f,g\in L^2(M)
\]
satisfy
\[
|\operatorname{supp}(f)|\leq \eta,
\qquad
|\operatorname{supp}(g)|\leq \eta,
\]
and
\[
\langle f,e_j\rangle
=
\langle g,e_j\rangle
\qquad
\text{whenever }
\lambda_j\notin I_\lambda.
\]

Define
\[
h=f-g.
\]
By Lemma \ref{lem:DS-manifold},
\[
h=\Pi_{I_\lambda}h.
\]
Moreover,
\[
\operatorname{supp}(h)
\subset
\operatorname{supp}(f)\cup \operatorname{supp}(g),
\]
so
\[
|\operatorname{supp}(h)|
\leq 2\eta.
\]

Suppose that
\[
h\neq 0.
\]

Since
\[
h=\Pi_{I_\lambda}h,
\]
the function $h$ is spectrally supported in the cluster
\[
I_\lambda=[\lambda,\lambda+1].
\]
Therefore \eqref{ineq:UP-FR-eig} applies and yields
\begin{equation}\label{ineq:spec-result-recovery}
(\# \mathcal C_\lambda)^{\frac12}
|\operatorname{supp}(h)|^{\frac{q-2}{2q}}
\geq
\frac{1}{C\lambda^{\sigma(q)}}
\frac{1}{\FR_{2\alpha(q)}(h)}.
\end{equation}

Since
\[
2\alpha(q)\geq 2,
\]
by \eqref{ineq:FR-2-alpha}, we have
\[
\FR_{2\alpha(q)}(h)
\leq
1.
\]
Hence
\[
(\# \mathcal C_\lambda)^{\frac12}
|\operatorname{supp}(h)|^{\frac{q-2}{2q}}
\geq
\frac{1}{C\lambda^{\sigma(q)}}.
\]

Using
\[
|\operatorname{supp}(h)|\leq 2\eta,
\]
we obtain
\[
(\# \mathcal C_\lambda)^{\frac12}
(2\eta)^{\frac{q-2}{2q}}
\geq
\frac{1}{C\lambda^{\sigma(q)}},
\]
contradicting \eqref{eq:recovery-condition-main} if the constant $C>0$ in \eqref{eq:recovery-condition-main} is chosen to be the same constant $C>0$ in \eqref{ineq:spec-result-recovery}. Therefore
\[
h=0,
\]
which implies that
\[
f=g.
\]
The proof is complete.
\end{proof}

\begin{remark}
The factor $2\eta$ appearing in
\eqref{eq:recovery-condition-main}
is the direct analogue of the factor $2|E|$ in the classical Donoho--Stark uniqueness
argument \cite{DonohoStark1989}. It arises because the difference of two functions
supported on sets of measure at most $\eta$ is supported on the union of the two supports.
\end{remark}

We next consider the Schr\"odinger setting.
Let
\[
P_V=\sqrt{H_V},
\qquad
\Pi_I^V=\mathds{1}_I(P_V),
\]
and let
\[
\tau_j
\]
denote the eigenvalues of $P_V$.

\begin{theorem}[Recovery for singular Schr\"odinger operators]
\label{thm:recovery-schrodinger}
Let $(M,g)$ be a smooth compact Riemannian manifold without boundary of dimension
$n\geq 2$, and let
\[
V\in \mathcal K(M)\cup L^{n/2}(M).
\]

Let
\[
I_\lambda=[\lambda,\lambda+1],
\]
and define
\[
\mathcal C_\lambda^V
=
\{j:\tau_j\in I_\lambda\}.
\]

Suppose that the exponent $q$ belongs to one of the admissible ranges in
Theorem \ref{thm:hv-Kato-or-Ln2}.
Then there exists a constant $C_V>0$ such that if
\begin{equation}\label{eq:recovery-schrodinger}
(\# \mathcal C_\lambda^V)^{\frac12}
(2\eta)^{\frac{q-2}{2q}}
<
\frac{1}{C_V\lambda^{\sigma(q)}},
\end{equation}
every function
\[
f\in L^2(M)
\]
with
\[
|\operatorname{supp}(f)|\leq \eta
\]
is uniquely recoverable among functions supported on sets of measure at most $\eta$
from its $H_V$-spectral data outside $I_\lambda$.
\end{theorem}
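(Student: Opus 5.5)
The argument will mirror the proof of Theorem \ref{thm:recovery-main}, with \eqref{ineq:UP-FR-eig} replaced by the appropriate part of Theorem \ref{thm:hv-Kato-or-Ln2}. Let $f,g\in L^2(M)$ with $|\operatorname{supp}(f)|,|\operatorname{supp}(g)|\leq\eta$. Assume $\langle f,e_j^V\rangle=\langle g,e_j^V\rangle$ whenever $\tau_j\notin I_\lambda$.

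The first step is the Schr\"odinger analogue of Lemma \ref{lem:DS-manifold}. Since $\{e_j^V\}$ is an orthonormal basis of $L^2(M)$, the function $h=f-g$ expands as $h=\sum_j\langle h,e_j^V\rangle e_j^V$. All coefficients with $\tau_j\notin I_\lambda$ vanish, so
\[
h=\sum_{\tau_j\in I_\lambda}E_j^Vh=\Pi^V_{I_\lambda}h=\mathds{1}_{[\lambda,\lambda+1]}(\sqrt{H_V})h.
\]
This uses only the spectral theorem for the self-adjoint operator $H_V$, which the paper recorded for $V\in\mathcal K(M)\cup L^{n/2}(M)$ via \cite{BlairSireSogge2021Quasimode,BlairHuangSireSogge2022UniformSobolev}. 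In addition, $\operatorname{supp}(h)\subset\operatorname{supp}(f)\cup\operatorname{supp}(g)$, so $E:=\operatorname{supp}(h)$ satisfies $|E|\leq 2\eta$.

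Next, suppose for contradiction that $h\neq0$. Then $h$ satisfies the hypotheses of Theorem \ref{thm:hv-Kato-or-Ln2} with this $E$, and $S_1=\mathcal C_\lambda^V$. The case split follows the hypothesis on $V$:
\begin{itemize}
\item if $V\in\mathcal K(M)\cap L^{n/2}(M)$ and $2<q\leq\infty$, apply \eqref{ineq:UP-hv-Kato-and-Ln2};
\item if $V\in L^{n/2}(M)$ with $n\geq3$ and $q$ in the range of \eqref{ineq:UP-hv-Ln2}, apply \eqref{ineq:UP-hv-Ln2}.
\end{itemize}
In each case we get
\[
(\#\mathcal C_\lambda^V)^{1/2}|E|^{\frac{q-2}{2q}}\geq \frac{1}{C_V\lambda^{\sigma(q)}}.
\]
Since $q>2$, the exponent $\frac{q-2}{2q}$ is positive, so $|E|\le 2\eta$ gives $|E|^{\frac{q-2}{2q}}\leq(2\eta)^{\frac{q-2}{2q}}$. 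Choosing the $C_V$ in \eqref{eq:recovery-schrodinger} equal to the constant from the theorem just used, the result contradicts \eqref{eq:recovery-schrodinger}. Hence $h=0$ and $f=g$.

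The only real obstacle is bookkeeping of the admissible ranges. The phrase ``one of the admissible ranges'' has to be read as pairing each class of $V$ with the estimate proved for that class. In particular, a pure Kato potential $V\in\mathcal K(M)\setminus L^{n/2}(M)$ is covered in Theorem \ref{thm:hv-Kato-or-Ln2} only when $n=2$, through \eqref{ineq:UP-hv-mixed}. That estimate has the different form $(\#S_1)|E|^{2/n}\gtrsim\lambda^{-1}$. I would note that for this mixed case the recovery condition must be stated in the matching form $(\#\mathcal C_\lambda^V)(2\eta)^{2/n}<(C_V\lambda)^{-1}$, and the same contradiction argument then applies verbatim.

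A second minor point concerns the endpoint $q=\infty$ with exponent $\frac12$ in the Kato-and-$L^{n/2}$ case. This is handled by reading $|E|^{1/2}$ directly.
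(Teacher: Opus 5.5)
Your proposal is correct and matches the paper's argument: the paper says only to repeat the proof of Theorem~\ref{thm:recovery-main} with Theorem~\ref{thm:hv-Kato-or-Ln2} in place of \eqref{ineq:UP-FR-eig}, and your explicit $H_V$-version of Lemma~\ref{lem:DS-manifold} fills in what the paper leaves implicit. Your caveat about the mixed case \eqref{ineq:UP-hv-mixed} is unnecessary: after taking square roots it is exactly the stated form with $q=\frac{2n}{n-2}$ (resp.\ $q=\infty$ if $n=2$), where $\frac{q-2}{2q}=\frac{1}{n}$ and $\sigma(q)=\frac12$, and it is already contained in parts (1)--(2), since $\mathcal K(M)\subset L^1(M)=L^{n/2}(M)$ when $n=2$, so $\mathcal K(M)\setminus L^{n/2}(M)$ is empty there.
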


\begin{proof}
The proof is identical to the proof of
Theorem \ref{thm:recovery-main},
with Theorem \ref{thm:hv-Kato-or-Ln2}
replacing \eqref{ineq:UP-FR-eig}.
\end{proof}

The logarithmically improved uncertainty principles proved in the previous section lead
directly to stronger recovery statements on manifolds with nonpositive or negative curvature.

\begin{corollary}[Logarithmically improved recovery]
\label{cor:log-recovery}
Assume the hypotheses of
Theorem \ref{thm:UP-nonposit-curv}.
Let
\[
I_\lambda^{\log}
=
[\lambda,\lambda+(\log \lambda)^{-1}],
\]
and define
\[
\mathcal C_{\lambda,\log}
=
\{j:\lambda_j\in I_\lambda^{\log}\}.
\]

Suppose that
\begin{equation}\label{eq:log-recovery}
(\# \mathcal C_{\lambda,\log})^{\frac12}
(2\eta)^{\frac{q-2}{2q}}
<
\frac{(\log \lambda)^{\delta_n(q)}}
{C\lambda^{\sigma(q)}}.
\end{equation}

Then every function
\[
f\in L^2(M)
\]
with
\[
|\operatorname{supp}(f)|\leq \eta
\]
is uniquely recoverable among functions supported on sets of measure at most $\eta$
from spectral data outside
$I_\lambda^{\log}$.
\end{corollary}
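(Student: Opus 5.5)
The argument runs exactly as in Theorem \ref{thm:recovery-main}, with the logarithmically improved uncertainty principle \eqref{ineq:V=0-nonpositive} of Theorem \ref{thm:UP-nonposit-curv} (or, for $H_V$, \eqref{ineq:s2-sup-crit}, \eqref{ineq:s2-crit}, \eqref{ineq:s2-all} in their admissible ranges of $q$) taking the place of \eqref{ineq:UP-FR-eig}.

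Suppose $f,g\in L^2(M)$ both have supports of measure at most $\eta$, and that their spectral coefficients agree for every $\lambda_j\notin I_\lambda^{\log}$. Put $h=f-g$.

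\textbf{Spectral localization.} Lemma \ref{lem:DS-manifold}, applied with $I=I_\lambda^{\log}$, gives
\[
h=\mathds{1}_{[\lambda,\lambda+(\log\lambda)^{-1}]}(P)h.
\]

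\textbf{Support bound.} Since $\operatorname{supp}(h)\subset \operatorname{supp}(f)\cup\operatorname{supp}(g)$, we have $|\operatorname{supp}(h)|\le 2\eta$.

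\textbf{Contradiction.} Assume $h\neq 0$. Then Theorem \ref{thm:UP-nonposit-curv}(1) applies with $E=\operatorname{supp}(h)$ and $S_2=\mathcal C_{\lambda,\log}$, and gives
\[
(\#\mathcal C_{\lambda,\log})^{\frac12}(2\eta)^{\frac{q-2}{2q}}\ \ge\ (\#\mathcal C_{\lambda,\log})^{\frac12}|\operatorname{supp}(h)|^{\frac{q-2}{2q}}\ \ge\ \frac{(\log\lambda)^{\delta_n(q)}}{C\lambda^{\sigma(q)}}.
\]
The first inequality uses that the exponent $\frac{q-2}{2q}$ is nonnegative for $q>2$. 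If the constant $C$ in \eqref{eq:log-recovery} is taken to be the constant of \eqref{ineq:V=0-nonpositive}, this contradicts \eqref{eq:log-recovery}. Hence $h=0$, that is, $f=g$.

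\textbf{Points to check.} There is no real analytic obstacle, since all the content sits in Theorem \ref{thm:UP-nonposit-curv}. Three bookkeeping points need care.
\begin{enumerate}
\item Theorem \ref{thm:UP-nonposit-curv} is stated with $\log(2+\lambda)$ in parts (2)--(3) and with $\log\lambda$ in part (1). For $\lambda\ge 2$ these are comparable, so the discrepancy only changes $C$. At $\lambda$ near $1$ the window $[\lambda,\lambda+(\log\lambda)^{-1}]$ is degenerate, so we implicitly assume $\lambda\ge 2$.
\item In the Schrödinger version, the exponent $\delta_n(q)$ must be replaced by $\kappa(q)$ wherever \eqref{ineq:s2-all} is the estimate being used.
\item Theorem \ref{thm:UP-nonposit-curv} does not actually assume that $f$ lies in $L^2$ with a pointwise expansion; it only uses that the spectral expansion is a finite sum over the cluster. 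This holds for $h$ because it is spectrally localized to a finite set of eigenvalues.
\end{enumerate}
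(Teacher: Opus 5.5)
Your proposal is correct and follows the paper's own argument, which is the one-line version of what you wrote. Both localize $h=f-g$ spectrally in $I_\lambda^{\log}$ via Lemma \ref{lem:DS-manifold}, bound $|\operatorname{supp}(h)|\le 2\eta$, and then apply the $V\equiv 0$ estimate \eqref{ineq:V=0-nonpositive} of Theorem \ref{thm:UP-nonposit-curv}, with the same constant $C$, to contradict \eqref{eq:log-recovery} unless $h=0$. Your bookkeeping remarks are details the paper leaves implicit: taking $\lambda\ge 2$, monotonicity in $|E|$ because $\frac{q-2}{2q}\ge 0$, and the finiteness of the eigenfunction expansion of $h$.
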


\begin{proof}
If two admissible functions agree outside
$I_\lambda^{\log}$,
their difference is spectrally localized in
$I_\lambda^{\log}$.
Applying the logarithmically improved uncertainty principle from
Theorem \ref{thm:UP-nonposit-curv}
yields a contradiction unless the difference vanishes identically.
\end{proof}

\begin{remark}
The results in this section are uniqueness statements rather than algorithmic recovery
procedures. This distinction is deliberate. In finite abelian groups, uncertainty principles
can often be combined with explicit reconstruction algorithms, including the direct rounding
algorithm for binary signals developed in
\cite{IosevichMayeli2025ACHA}.
In the present manifold setting, the natural analogue of the missing frequency set is a
spectral cluster, and the uncertainty principles proved above show that functions supported
on sufficiently small sets are uniquely determined by incomplete spectral data whenever the
missing spectral window is sufficiently small.

It would be interesting to investigate quantitative stability estimates and explicit
reconstruction procedures in this geometric setting. These questions lie beyond the scope of
the present paper.
\end{remark}

\section{Proof of Proposition \ref{prop:hv-Kato-and-Ln2}} \label{sec:Pf-Prop-hv-Kato-and-Ln2}
In this section, we prove Proposition \ref{prop:hv-Kato-and-Ln2}. Throughout this section, we write
\begin{align*}
    & P=\sqrt{-\Delta_g},\quad P_V=\sqrt{H_V}, \\
    & \epsilon=\epsilon(\lambda)=(\log \lambda)^{-1}, \\
    & \mathds{1}_\lambda=\mathds{1}_{[\lambda-\epsilon, \lambda+\epsilon]},\quad \mathds{1}_{\lambda, l}(s)=\mathds{1}_{|s-\lambda|\in (2^l, 2^{l+1}]}(s),\quad \mathds{1}_{\leq 2\lambda}=\mathds{1}_{(-\infty, 2\lambda]}.
\end{align*}
Also, throughout this section, for simplicity, let us focus on the cases where
\begin{itemize}
    \item $M$ has negative sectional curvatures when $2<q\leq \frac{2(n+1)}{n-1}$, and
    \item $M$ has nonpositive sectional curvatures when $\frac{2(n+1)}{n-1}<q<\infty$
\end{itemize}
so that the log-improvements are all $\frac{1}{2}$, i.e., $(\log \lambda)^{\frac{1}{2}}$-improvements, whenever we apply spectral cluster bounds from \cite{BlairHuangSireSogge2022UniformSobolev, HuangSogge2025Curvature, HassellTacy2015improvement}, i.e., \eqref{log-window-est} below. The other cases follow similarly.

By \cite{BlairSireSogge2021Quasimode}, Cauchy-Schwarz inequality, and orthogonality, we know that
\begin{align}\label{large-window-est}
    \|\mathds{1}_{[\lambda, \lambda+\rho]} (P_V)\|_{L^2 (M)\to L^q (M)}\lesssim \lambda^{\sigma(q)}\rho^{\frac{1}{2}},\quad \text{for all } \rho\geq 1.
\end{align}
Similarly, if $M$ has either nonpositive and negative sectional curvatures with the exponent $q$ considered in this section, then by the work of \cite{HassellTacy2015improvement, HuangSogge2025Curvature, BlairSogge2019logarithmic, BlairHuangSogge2022Improved, BlairSireSogge2021Quasimode, BlairHuangSireSogge2022UniformSobolev}, we have, for $\epsilon=\epsilon(\lambda)=(\log \lambda)^{-1}$,
\begin{align}\label{log-window-est}
    \begin{split}
        & \|\mathds{1}_{[\lambda, \lambda+\epsilon]}(P)\|_{L^2 (M)\to L^q (M)}\lesssim \lambda^{\sigma(q)}\epsilon^{\frac{1}{2}} \quad \text{when } 2<q\leq \infty, \text{ and}\\
        & \|\mathds{1}_{[\lambda, \lambda+\epsilon]}(P_V)\|_{L^2 (M)\to L^q (M)}\lesssim \lambda^{\sigma(q)}\epsilon^{\frac{1}{2}} \quad \text{when } V\in L^{\frac{n}{2}}(M) \text{ and } \begin{cases}
        \frac{2(n+1)}{n-1}<q\leq \frac{2n}{n-4}, & \text{if } n\geq 5, \\
        \frac{2(n+1)}{n-1}<q<\infty, & \text{if } n=3, 4.
    \end{cases} 
    \end{split}
\end{align}
We shall use these estimates repeatedly throughout this section, often without further comments, especially in the case where $V\in \mathcal{K}(M)\cap L^{n/2}(M)\subset L^{n/2}(M)$ and $q=\frac{2n}{n-2}$.

Fix a nonnegative even function $\chi\in C_0^\infty (\mathbb{R})$ such that
\begin{align*}
    \mathrm{supp}(\chi)\subset (-\epsilon, \epsilon),\quad \chi\equiv 1 \text{ on } (-\epsilon/2, \epsilon/2).
\end{align*}
We also introduce a notation
\begin{align*}
    \chi_\lambda (s)=\chi (\lambda-s),\quad \text{when } \lambda\gg 1.
\end{align*}
By \cite{HuangSogge2025Curvature} and \cite{HassellTacy2015improvement}, we know that
\begin{align}\label{eqn:prop-(3)}
    \|\chi_\lambda (P)\|_{L^2 (M)\to L^q (M)}\lesssim \frac{\lambda^{\sigma (q)}}{(\log \lambda)^{\kappa(q)}},
\end{align}
where $\kappa(q)$ is as in \eqref{setup-kappa} (as we said, we set $k(q)=\frac{1}{2}$ for simplicity in what follows). For $s\geq 0$, we have $\chi (\lambda+s)=0$, and so,
\begin{align*}
    \chi(\lambda-s)=\chi(\lambda-s)+\chi(\lambda+s)=\frac{1}{\pi}\int_{-\infty}^\infty \hat{\chi}(t)e^{it\lambda} \cos (ts)\:dt.
\end{align*}
As in \cite{HuangSogge2021Weyl, HuangZhang2022PtwiseWeyl, HuangZhang2023SharpPtwiseWeyl, HuangWangZhang2026restriction}, by Duhamel's principle and spectral theorem, we can write
\begin{align*}
    (\cos tP_V)(x, y)-(\cos tP)(x, y)=\sum_{\lambda_j} \sum_{\tau_k} \int_M \left[-\int_0^t \frac{\sin (t-s)\lambda_j}{\lambda_j} \cos (s\tau_k)\:ds\right] e_j (x) e_j (z) e_k^V (z) e_k^V (y) V(z)\:dx.
\end{align*}
Here, using the spectral theorem, we choose $\{e_j\}$ and $\{e_k^V\}$ to be the bases with real-valued functions for convenience so that we do not need to consider the conjugates of $e_j$ and $e_k^V$ in what follows. By \cite[Lemma 2.3]{HuangSogge2021Weyl},
\begin{align}\label{eqn:prop-1.5-(4.1)}
    \begin{split}
        -\int_0^t \frac{\sin (t-s)\lambda_j}{\lambda_j} \cos (s\tau_k)\:ds=\begin{cases}
            \frac{\cos (t\lambda_j)-\cos (t\tau_k)}{\lambda_j^2-\tau^2}, & \text{if } \lambda_j\not=\tau_k, \\
            -\frac{t \sin (t\tau_k)}{2\tau_k}, & \text{if } \lambda_j=\tau_k.
        \end{cases}
    \end{split}
\end{align}
If we set
\begin{align*}
    m(s, \tau)=\frac{\cos (ts)-\cos (t\tau)}{s^2-\tau^2},\quad \text{when } s\not=\tau,
\end{align*}
then
\begin{align*}
    \lim_{s\to \tau} m(s, \tau)=-\frac{t\sin (ts)}{2s},
\end{align*}
and so, in \eqref{eqn:prop-1.5-(4.1)}, for simplicity, we focus only on the case where
\begin{align*}
    -\int_0^t \frac{\sin (t-s)\lambda_j}{\lambda_j} \cos (s\tau_k)\:ds=m(\lambda_j, \tau_k) \quad \text{and} \quad \lambda_j\not=\tau_k.
\end{align*}
We can then write the difference of the kernels as
\begin{align*}
    \chi_\lambda (P_V)(x, y)-\chi_\lambda (P)(x, y)&=\sum_{\lambda_j}\sum_{\tau_k} \frac{\chi_\lambda (\lambda_j)-\chi_\lambda (\tau_k)}{\lambda_j^2-\tau^2} e_j (x) e_j (z) e_k^V (z) e_k^V (y) V(z)\:dz \\
    &=\sum_{\lambda_j}\sum_{\tau_k} \int_M m(\lambda_j, \tau_k) e_j (x) e_j (z) e_k^V (z) e_k^V (y) V(z)\:dz \\
    &=:K(x, y).
\end{align*}
If $K$ is the operator whose integral kernel is $K(x, y)$, then we want to show that the operator norm $\|K\|_{L^2 (M)\to L^q (M)}$ satisfies the desired bounds in Proposition \ref{prop:hv-Kato-and-Ln2}. In what follows, for notational convenience, if we consider the kernel, say, $\tilde{K}(x, y)$, then $\tilde{K}$ denotes the operator whose kernel is $\tilde{K}(x, y)$, and vice versa. As in \cite{HuangWangZhang2026restriction}, we consider five different cases.
\begin{itemize}
    \item Case 1: $|\tau_k-\lambda|\leq \epsilon$ and $|\lambda_j-\lambda|\leq \epsilon$.
    \item Case 2: $|\tau_k-\lambda|\leq \epsilon$, $|\lambda_j-\lambda|\in (2^l, 2^{l+1}]$, and $\epsilon\leq 2^l\leq \lambda$.
    \item Case 3: $|\lambda_j-\lambda|\leq \epsilon$, $|\tau_k-\lambda|\in (2^l, 2^{l+1}]$, and $\epsilon\leq 2^l\leq \lambda$.
    \item Case 4: $|\lambda_j-\lambda|\leq \epsilon$, and $\tau_k\geq 2\lambda$.
    \item Case 5: $|\tau_k-\lambda|\leq \epsilon$, and $\lambda_j\geq 2\lambda$.
\end{itemize}
We shall consider Case 1, 3, 4, 5, and 2 in order. Case 1, 3, 4 are almost the same as in \cite[Case 1, 3, 4]{HuangWangZhang2026restriction}. Case 5 and 2 are slightly changed here, but in fact simpler in Case 5, since we are not going to consider the bootstrap argument of \cite[Case 5]{HuangWangZhang2026restriction}. In Case 2, we shall use scaling arguments to obtain some ``uniform Sobolev''-type estimates in the sense of \cite{KenigRuizSogge1987UniformSobolev, S.Huang-Sogge2014Resolvent, DSFKenigSalo2014Forum, BourgainShaoSoggeYao2015Resolvent, Hickman2020UniformResolventEstimates, BlairSireSogge2021Quasimode, BlairHuangSireSogge2022UniformSobolev} to consider the exponents $q$ less than the critical exponent $\frac{2(n+1)}{n-1}$, say, $q>\frac{2n}{n-1}$ for the higher $n$ (see also \cite{BlairPark2025LqEstimates, BlairPark2026Resolvent} and \cite[\S 4, \S 7]{HuangWangZhang2026restriction} for related work), and this will be a difference. If one is familiar with the computations in \cite{HuangWangZhang2026restriction}, one can directly jump into Case 5 and 2, although we shall consider all the cases in this section for the sake of completeness.

\subsection{\texorpdfstring{Case 1: $|\tau_k-\lambda|\leq \epsilon, |\lambda_j-\lambda|\leq \epsilon$}{Case 1}}
In this case, we have
\begin{align}\label{eqn:prop-(4)}
    |m(\lambda_j, s)|+\epsilon|\partial_s m(\lambda_j, s)|\lesssim (\lambda \epsilon(\lambda))^{-1},\quad \text{for } |s-\lambda|\leq \epsilon(\lambda).
\end{align}
We write
\begin{align*}
    \sum_{|\lambda_j-\lambda|\leq \epsilon} \sum_{|\tau_k-\lambda|\leq \epsilon} \int_M m(\lambda_j, \tau_k) e_j (x) e_j (z) e_k^V (z) e_k^V (y) V(z)\:dz=K_1 (x, y)+K_2 (x, y),
\end{align*}
where
\begin{align*}
    & K_1 (x, y)=\sum_{|\lambda_j-\lambda|\leq \epsilon} \sum_{|\tau_k-\lambda|\leq \epsilon} \int_M \int_{\lambda-\epsilon}^{\lambda+\epsilon} \partial_s m (\lambda_j, s) \mathds{1}_{[\lambda-\epsilon, \tau_k]} (s) e_j (x) e_j (z) e_k^V (z) e_k^V (y)V(z)\:dz\:ds, \\
    & K_2 (x, y)=\sum_{|\lambda_j-\lambda|\leq \epsilon} \int_M \sum_{|\tau_k-\lambda|\leq \epsilon} m(\lambda_j, \lambda-\epsilon) e_j (x) e_j (z) e_k^V (z) e_k^V (y)V(z)\:dz\:ds.
\end{align*}
We compute $K_2 (x, y)$ only, since the computation for $K_1$ is similar. We note that
\begin{align*}
    K_2 f(x)&=\int_M K_2 (x, y)f(y)\:dy \\
    &=\sum_{|\lambda_j-\lambda|\leq \epsilon} m(\lambda_j, \lambda-\epsilon) e_j (x) \int_M e_j (z) V(z)\sum_{|\tau_k-\lambda|\leq \epsilon} e_k^V (z) \left[\int_M f(y) e_k^V (y)\:dy\right]dz \\
    &=\sum_{|\lambda_j-\lambda|\leq \epsilon} m (\lambda_j, \lambda-\epsilon) e_j (x)\int_M e_j (z) \Big[V\cdot (\mathds{1}_\lambda (P_V)f)\Big](z)\:dz \\
    &=\sum_{|\lambda_j-\lambda|\leq \epsilon} m(\lambda_j, \lambda-\epsilon) e_j (x)\langle V\cdot (\mathds{1}_\lambda (P_V)f), e_j \rangle_{L^2 (M)} \\
    &=\mathds{1}_\lambda (P)m(P, \lambda-\epsilon) (V\cdot (\mathds{1}_\lambda (P_V))f)(x).
\end{align*}
Since $\mathds{1}_\lambda (P)=\mathds{1}_\lambda (P)\circ \mathds{1}_\lambda (P)$, by \eqref{large-window-est}, \eqref{log-window-est}, \eqref{eqn:prop-(4)}, duality, and H\"older's inequality, we have, for all $2<q\leq \infty$,
\begin{align*}
    \|K_2 f\|_{L^q (M)}&=\|\mathds{1}_\lambda (P) m(P, \lambda-\epsilon) (V\cdot (\mathds{1}_\lambda (P_V)f))\|_{L^q (M)} \\
    &\lesssim \frac{\lambda^{\sigma (q)}}{(\log \lambda)^{\frac{1}{2}}} \|\mathds{1}_\lambda (P) m(P, \lambda-\epsilon) (V\cdot (\mathds{1}_\lambda (P_V)f))\|_{L^2 (M)} \\
    &\lesssim \frac{\lambda^{\sigma (q)}}{(\log \lambda)^{\frac{1}{2}}} (\lambda \epsilon)^{-1} \| \mathds{1}_\lambda (P) (V\cdot (\mathds{1}_\lambda (P_V))f)\|_{L^2 (M)} \\
    &\lesssim \frac{\lambda^{\sigma(q)}}{(\log \lambda)^{\frac{1}{2}}} (\lambda\epsilon)^{-1}\cdot \lambda^{\sigma \left(\frac{2n}{n-2}\right)}\epsilon^{\frac{1}{2}} \|V\cdot \mathds{1}_\lambda (P_V)f\|_{L^{\frac{2n}{n+2}}(M)} \\
    &\leq \frac{\lambda^{\sigma(q)}}{(\log \lambda)^{\frac{1}{2}}} (\lambda \epsilon)^{-1} \cdot \lambda^{\sigma\left(\frac{2n}{n-2} \right)} \epsilon^{\frac{1}{2}} \|V\|_{L^{\frac{n}{2}}(M)}\|\mathds{1}_\lambda (P_V) f\|_{L^{\frac{2n}{n-2}} (M)} \\
    &\lesssim \frac{\lambda^{\sigma(q)}}{(\log \lambda)^{\frac{1}{2}}} (\lambda \epsilon)^{-1} \cdot \lambda^{2\sigma\left(\frac{2n}{n-2} \right)}\epsilon \|V\|_{L^{\frac{n}{2}}(M)}\|f\|_{L^2 (M)} \\
    &=\frac{\lambda^{\sigma(q)}}{(\log \lambda)^{\frac{1}{2}}} \|V\|_{L^{\frac{n}{2}}(M)}\|f\|_{L^2 (M)},
\end{align*}
which satisfies the desired bound. This also proves the case for $V\in L^r (M)$ for $r>\frac{n}{2}$, since if $V\in L^r (M)$, then $V\in L^{\frac{n}{2}}(M)$ by the compactness of $M$.

\subsection{\texorpdfstring{Case 3: $|\lambda_j-\lambda|\leq \epsilon$, $|\tau_k-\lambda|\in (2^l, 2^{l+1}], \epsilon\leq 2^l\leq \lambda$}{Case 3}}
In this case, we have $m(\lambda_j, \tau_k)=\frac{\chi_\lambda (\lambda_j)}{\lambda_j^2-\tau_k^2}$, and
\begin{align*}
    |m(s, \tau_k)|+\epsilon |\partial_s m(s, \tau_k)|\lesssim \lambda^{-1} 2^{-l},\quad \text{for } |s-\lambda|\leq \epsilon.
\end{align*}
We write
\begin{align*}
    \sum_{|\tau_k-\lambda|\in (2^l, 2^{l+1}]} \sum_{|\lambda_j-\lambda|\leq \epsilon} \int_M m(\lambda_j, \tau_k) e_j (x) e_j (z) e_k^V (z) e_k^V (y) V(z)\:dz=K_{1, l}(x, y)+K_{2, l}(x, y),
\end{align*}
where
\begin{align*}
    & K_{1, l}(x, y)=\sum_{|\tau_k-\lambda|\in (2^l, 2^{l+1}]} \sum_{|\lambda_j-\lambda|\leq \epsilon} \int_M \int_{\lambda-\epsilon}^{\lambda+\epsilon} \partial_s m(s, \tau_k) \mathds{1}_{[\lambda-\epsilon, \lambda_j]}(s) e_j (x) e_j (z) e_k^V (z) e_k^V (y) V(z)\:dz\:ds, \\
    & K_{2, l}(x, y)=\sum_{|\tau_k-\lambda|\in (2^l, 2^{l+1}]} \sum_{|\lambda_j-\lambda|\leq \epsilon} \int_M m(\lambda-\epsilon, \tau_k) e_j (x) e_j (z) e_k^V (z) e_k^V (y) V(z)\:dz.
\end{align*}
We only estimate $K_{2, l}$, since the computation for $K_{1, l}$ is similar. For any $f\in L^2 (M)$, as in Case 1,
\begin{align*}
    \|K_2 f\|_{L^q (M)}&=\|\mathds{1}_\lambda (P) (V\cdot \mathds{1}_{\lambda, l}(P_V)m(\lambda-\epsilon, P_V)f)\|_{L^q (M)} \\
    &\lesssim \frac{\lambda^{\sigma (q)}}{(\log \lambda)^{\frac{1}{2}}} \|\mathds{1}_\lambda (P)(V\cdot \mathds{1}_{\lambda, l} (P_V)m(\lambda-\epsilon, P_V)f) \|_{L^2 (M)} \\
    &\lesssim \frac{\lambda^{\sigma(q)}}{(\log \lambda)^{\frac{1}{2}}} \cdot \lambda^{\sigma\left(\frac{2n}{n-2} \right)}\epsilon^{\frac{1}{2}} \|V\cdot \mathds{1}_{\lambda, l} (P_V) m(\lambda-\epsilon, P_V)f\|_{L^{\frac{2n}{n+2}}(M)} \\
    &\leq \frac{\lambda^{\sigma(q)}}{(\log \lambda)^{\frac{1}{2}}} \cdot \lambda^{\sigma\left(\frac{2n}{n-2} \right)}\epsilon^{\frac{1}{2}} \|V\|_{L^{\frac{n}{2}}(M)}\cdot \|\mathds{1}_{\lambda, l} (P_V)m(\lambda-\epsilon, P_V)f\|_{L^{\frac{2n}{n-2}(M)}} \\
    &\lesssim \frac{\lambda^{\sigma(q)}}{(\log \lambda)^{\frac{1}{2}}} \cdot \lambda^{2\sigma\left(\frac{2n}{n-2} \right)}\epsilon^{\frac{1}{2}}2^{\frac{l}{2}}\lambda^{-1}\cdot 2^{-l} \|V\|_{L^{\frac{n}{2}}(M)}\|f\|_{L^2 (M)} \\
    &=\frac{\lambda^{\sigma (q)}}{\log \lambda} \cdot 2^{-\frac{l}{2}} \|V\|_{L^{\frac{n}{2}(M)}} \|f\|_{L^2 (M)}.
\end{align*}
Summing over all $\epsilon\leq 2^l\leq \lambda$ gives the desired bound.

\subsection{\texorpdfstring{Case 4: $|\lambda_j-\lambda|\leq \epsilon, \tau_k>2\lambda$}{Case 4}}
In this case, we can write
\begin{align*}
    m(\lambda_j, \tau_k)=\frac{\chi_\lambda (\lambda_j)}{\lambda_j^2-\tau_k^2} &=\int_0^\infty \chi_\lambda (\lambda_j) e^{-t(\tau_k^2-\lambda_j^2)}\:dt \\
    &=:m_1 (\lambda_j, \tau_k)+m_2 (\lambda_j, \tau_k),
\end{align*}
where
\begin{align*}
    m_1 (\lambda_j, \tau_k)=\int_0^{\lambda^{-2}} \chi_\lambda (\lambda_j) e^{-t(\tau_k^2-\lambda_j^2)}\:dt,\quad m_2 (\lambda_j, \tau_k)=\frac{\chi_\lambda (\lambda_j)e^{-\lambda^{-2}(\tau_k^2-\lambda_j^2)}}{\tau_k^2-\lambda_j^2}.
\end{align*}
We first consider $m_2$. We write
\begin{align*}
    (2\lambda, \infty)=\bigcup_{l=1}^\infty I_l,\quad \text{where } I_l=(2^l \lambda, 2^{l+1}\lambda].
\end{align*}
One can check that, for $\tau_k\in I_l, |s-\lambda|\leq \epsilon$, and $N\in \mathbb{N}$,
\begin{align*}
    |m_2 (s, \tau_k)|+\epsilon|\partial_s (s, \tau_k)|\lesssim \leq C_N \lambda^{-2} 2^{-Nl}.
\end{align*}
As in Case 1, we write
\begin{align*}
    \sum_{|\lambda_j-\lambda|\leq \epsilon} \sum_{\tau_k\in I_l} \int_M m_2 (\lambda_j, \tau_k)e_j (x) e_j (z) e_k^V (z) e_k^V (y) V(z)\:dz=K_{1, l}(x, y)+K_{2, l}(x, y),
\end{align*}
where
\begin{align*}
    & K_{1, l}(x, y)=\sum_{|\lambda_j-\lambda|\leq \epsilon} \sum_{\tau_k\in I_l} \int_M \int_{\lambda-\epsilon}^{\lambda+\epsilon} \partial_s m_2 (s, \tau_k) \mathds{1}_{[\lambda-\epsilon, \lambda_j]} (s) e_j (x) e_j (z) e_k^V (z) e_k^V (y) V(z)\:dz\:ds, \\
    & K_{2, l}(x, y)=\sum_{|\lambda_j-\lambda|\leq \epsilon} \sum_{\tau_k\in I_l} \int_M m_2 (\lambda-\epsilon, \tau_k) e_j (x) e_j (z) e_k^V (z) e_k^V (y) V(z)\:dz.
\end{align*}
We only compute $K_{2, l}$, since the computation for $K_{1, l}$ is similar. As in Case 1, for any $f\in L^2 (M)$,
\begin{align*}
    \|K_{2, l}f\|_{L^q (M)}&=\|\mathds{1}_\lambda (P) (V\cdot \mathds{1}_{I_l} (P_V) m_2 (\lambda-\epsilon, P_V)f) \|_{L^q (M)} \\
    &\lesssim \frac{\lambda^{\sigma(q)}}{(\log \lambda)^{\frac{1}{2}}} \|\mathds{1}_\lambda (P)(V\cdot \mathds{1}_{I_l}(P_V)m_2 (\lambda-\epsilon, P_V)f)\|_{L^2 (M)} \\
    &\lesssim \frac{\lambda^{\sigma(q)}}{(\log \lambda)^{\frac{1}{2}}} \cdot \lambda^{\sigma\left(\frac{2n}{n-2}\right)} \epsilon^{\frac{1}{2}} \|V\cdot \mathds{1}_{I_l}(P_V)m_2 (\lambda-\epsilon, P_V)f\|_{L^{\frac{2n}{n+2}} (M)} \\
    &\lesssim \frac{\lambda^{\sigma(q)}}{(\log \lambda)^{\frac{1}{2}}} \cdot \lambda^{\sigma\left(\frac{2n}{n-2} \right)} \epsilon^{\frac{1}{2}} (\lambda 2^l)^{\sigma\left(\frac{2n}{n-2} \right)} (\lambda 2^l)^{\frac{1}{2}}\cdot \lambda^{-2}2^{-Nl} \|V\|_{L^{\frac{n}{2}}(M)} \|f\|_{L^2 (M)} \\
    &=\frac{\lambda^{\sigma(q)-1}}{\log \lambda}(2^l)^{\sigma\left(\frac{2n}{n-2} \right)-N+\frac{1}{2}} \|V\|_{L^{\frac{n}{2}}(M)} \|f\|_{L^2 (M)}.
\end{align*}
Taking $N\gg 1$ large enough, summing over all $l$ gives us a better bound.

We next consider $m_1$. We split the sum $\sum_{\tau_k>2\lambda}$ into the difference between the complete sum
\begin{align}\label{eqn:prop-(5)}
    \sum_{|\lambda_j-\lambda|\leq \epsilon} \sum_{\tau_k} \int_M m_1 (\lambda_j, \tau_k) e_j (x) e_j (z) e_k^V (z) e_k^V (y) V(z)\:dz,
\end{align}
and the partial sum
\begin{align}\label{eqn:prop-(6)}
    \sum_{|\lambda_j-\lambda|\leq \epsilon} \sum_{\tau_k\leq 2\lambda} \int_M m_1 (\lambda_j, \tau_k) e_j (x) e_j (z) e_k^V (z) e_k^V (y) V(z)\:dz.
\end{align}
We first consider the partial sum \eqref{eqn:prop-(6)}. If $\tau_k\leq 2\lambda$ and $|s-\lambda|\leq \epsilon$, then
\begin{align*}
    |m_1 (s, \tau_k)|+\epsilon|\partial_s m_1 (s, \tau_k)|\lesssim \lambda^{-2}.
\end{align*}
Then the partial sum can be written as $K_1 (x, y)+K_2 (x, y)$, where
\begin{align*}
    & K_1 (x, y)=\sum_{|\lambda_j-\lambda|\leq \epsilon} \sum_{\tau_k\leq 2\lambda} \int_M \int_{\lambda-\epsilon}^{\lambda+\epsilon} \partial_s m_1 (s, \tau_k) \mathds{1}_{[\lambda-1, \lambda_j]} (s) e_j (x) e_j (z) e_k^V (z) e_k^V (y) V(z)\:dz\:ds, \\
    & K_2 (x, y)=\sum_{|\lambda_j-\lambda|\leq \epsilon} \sum_{\tau_k\leq 2\lambda} \int_M m_1 (\lambda-\epsilon, \tau_k) e_j (x) e_j (z) e_k^V (z) e_k^V (y) V(z)\:dz\:ds.
\end{align*}
For $K_2$, as before, for any $f\in L^2 (M)$, we have
\begin{align*}
    \|K_2 f\|_{L^q(M)}&=\|\mathds{1}_\lambda (P) (V\cdot \mathds{1}_{\leq 2\lambda}(P_V) m_1 (\lambda-\epsilon, P_V) f)\|_{L^q (M)} \\
    &\lesssim \frac{\lambda^{\sigma (q)}}{(\log \lambda)^{\frac{1}{2}}} \|\mathds{1}_\lambda (P) (V\cdot \mathds{1}_{\leq 2\lambda}(P_V)m_1 (\lambda-\epsilon, P_V)f)\|_{L^2 (M)} \\
    &\lesssim \frac{\lambda^{\sigma(q)}}{(\log \lambda)^{\frac{1}{2}}}\cdot \lambda^{\sigma\left(\frac{2n}{n-2} \right)}\epsilon^{\frac{1}{2}} \|V\cdot \mathds{1}_{\leq 2\lambda} (P_V) m_1 (\lambda-\epsilon, P_V)f\|_{L^{\frac{2n}{n+2}}(M)} \\
    &\lesssim \frac{\lambda^{\sigma(q)}}{(\log \lambda)^{\frac{1}{2}}} \cdot \lambda^{\sigma\left(\frac{2n}{n-2} \right)} \epsilon^{\frac{1}{2}} \|V\|_{L^{\frac{n}{2}}(M)} \cdot \lambda^{\sigma\left(\frac{2n}{n-2} \right)} \cdot \lambda^{\frac{1}{2}} \|m_1 (\lambda-\epsilon, P_V)f\|_{L^2 (M)} \\
    &\lesssim \frac{\lambda^{\sigma(q)}}{(\log \lambda)^{\frac{1}{2}}} \cdot \lambda^{-\frac{1}{2}} \cdot \epsilon^{\frac{1}{2}}\|V\|_{L^{\frac{n}{2}}(M)} \|f\|_{L^2 (M)},
\end{align*}
which is better than what we want. The computation for $K_1$ is similar.

For the complete sum \eqref{eqn:prop-(5)}, we recall the heat kernel bounds (see, e.g., \cite[(3.6)]{HuangWangZhang2026restriction})
\begin{align}\label{eqn:prop-(7)}
    \| e^{-tH_V}\|_{L^p(M)\to L^q (M)}\lesssim t^{-\frac{n}{2}\left(\frac{1}{p}-\frac{1}{q} \right)},\quad \text{if } 0<t\leq 1 \text{ and } 1\leq p\leq q\leq \infty.
\end{align}
We also note that the operator
\begin{align*}
    \mathds{1}_\lambda (P) e^{-t\Delta_g}f(x)=\sum_{|\lambda_j-\lambda|\leq \epsilon} e^{it\lambda_j^2} e_j(x) \langle f, e_j \rangle_{L^2 (M)}
\end{align*}
is a bounded operator on $L^2 (M)$ whenever $0\leq t\leq \lambda^{-2}$. Combining this, \eqref{eqn:prop-(7)}, and the argument in Case 1, the contribution of the complete sum is majorized by a uniform constant times
\begin{align*}
    &\int_0^{\lambda^{-2}} \|\mathds{1}_\lambda (P)e^{-t\Delta_g} (V\cdot e^{-tH_V}f)\|_{L^q (M)}\:dt \\
    &\lesssim \frac{\lambda^{\sigma(q)}}{(\log \lambda)^{\frac{1}{2}}}\int_0^{\lambda^{-2}} \|\mathds{1}_\lambda (P) (V\cdot e^{-tH_V}f)\|_{L^2 (M)}\:dt \\
    &\lesssim \frac{\lambda^{\sigma(q)}}{(\log \lambda)^{\frac{1}{2}}} \cdot \lambda^{\sigma\left(\frac{2n}{n-2} \right)} \epsilon^{\frac{1}{2}} \int_0^{\lambda^{-2}} \|V\cdot e^{-tH_V}f\|_{L^{\frac{2n}{n+2}}(M)}\:dt \\
    &\leq \frac{\lambda^{\sigma(q)}}{(\log \lambda)^{\frac{1}{2}}} \cdot \lambda^{\sigma\left(\frac{2n}{n-2} \right)} \cdot \epsilon^{\frac{1}{2}} \cdot \|V\|_{L^{\frac{n}{2}}(M)} \int_0^{\lambda^{-2}} \|e^{-tH_V}f\|_{L^{\frac{2n}{n-2}(M)}}\:dt \\
    &\lesssim \frac{\lambda^{\sigma(q)}}{(\log \lambda)^{\frac{1}{2}}} \cdot\lambda^{\sigma\left(\frac{2n}{n-2} \right)}\cdot \epsilon^{\frac{1}{2}} \cdot \|V\|_{L^{\frac{n}{2}}(M)}\left(\int_0^{\lambda^{-2}} t^{-\frac{n}{2}\left(\frac{1}{2}-\frac{n-2}{2n} \right)}\:dt \right) \|f\|_{L^2 (M)} \\
    &=\frac{\lambda^{\sigma(q)-\frac{1}{2}}}{\log \lambda} \|V\|_{L^{\frac{n}{2}}(M)}\|f\|_{L^2 (M)}.
\end{align*}
This is better than what we want.

\subsection{\texorpdfstring{Case 5: $|\tau_k-\lambda|\leq \epsilon, \lambda_j>2\lambda$}{Case 5}}
In Case 5, we have
\begin{align*}
    m(\lambda_j, \tau_k)=\frac{-\chi_\lambda (\tau_k)}{\lambda_j^2-\tau_k^2}.
\end{align*}
As in Case 4, we write
\begin{align*}
    \frac{\chi_\lambda (\tau_k)}{\lambda_j^2-\tau_k^2}=m_1 (\lambda_j, \tau_k)+m_2(\lambda_j, \tau_k),
\end{align*}
where
\begin{align*}
    m_1 (\lambda_j, \tau_k)=\int_0^{\lambda^{-2}} \chi_\lambda (\tau_k) e^{-t(\lambda_j^2-\tau_k^2)}\:dt,\quad m_2 (\lambda_j, \tau_k)=\frac{\chi_\lambda (\tau_k) e^{-\lambda^{-2}(\lambda_j^2-\tau_k^2)}}{\lambda_j^2-\tau_k^2}.
\end{align*}
We first consider $m_2$ as in Case 4. If we write $(2\lambda, \infty)=\bigcup_{l=1}^\infty I_l$, where $I_l=(2^l \lambda, 2^{l+1}\lambda]$, then
\begin{align*}
    |m_2 (\lambda_j, s)|+\epsilon|\partial_s m_2 (\lambda_j, s)|\leq C_N \lambda^{-2} 2^{-Nl},\quad \text{for all } N\in \mathbb{N}.
\end{align*}
As before, we write
\begin{align*}
    \sum_{|\tau_k-\lambda|\leq \epsilon} \sum_{\lambda_j\in I_l} \int_M m_2 (\lambda_j, \tau_k) e_j (x) e_j (z) e_k^V (z) e_k^V (y) V(z)\:dz=K_{1, l}(x, y)+K_{2, l}(x, y),
\end{align*}
where
\begin{align*}
    & K_{1, l}(x, y)=\sum_{|\tau_k-\lambda|\leq \epsilon} \sum_{\lambda_j\in I_l} \int_M \int_{\lambda-\epsilon}^{\lambda+\epsilon} \partial_s m_2 (\lambda_j, s) \mathds{1}_{[\lambda-\epsilon, \tau_k]} (s) e_j (x) e_j (z) e_k^V (z) e_k^V (y) V(z)\:dz, \\
    & K_{2, l}(x, y)=\sum_{|\tau_k-\lambda|\leq \epsilon} \sum_{\lambda_j\in I_l} \int_M m_2 (\lambda_j, \lambda-\epsilon) e_j (x) e_j (z) e_k^V (z) e_k^V (y) V(z)\:dz.
\end{align*}
By the argument of Case 1 and \eqref{large-window-est},
\begin{align*}
    \|K_{2, l}f\|_{L^q (M)}&=\|\mathds{1}_{I_l}(P) m_2 (P, \lambda-\epsilon) (V\cdot \mathds{1}_\lambda (P_V)f)\|_{L^q (M)} \\
    &\lesssim (\lambda 2^l)^{\sigma(q)} (\lambda 2^l)^{\frac{1}{2}} \| \mathds{1}_{I_l} (P) m_2 (P, \lambda-\epsilon) (V\cdot \mathds{1}_\lambda (P_V)f) \|_{L^2 (M)} \\
    &\lesssim (\lambda 2^l)^{\sigma(q)} (\lambda 2^l)^{\frac{1}{2}} \cdot \lambda^{-2}\cdot 2^{-Nl} \|\mathds{1}_{I_l}(P) (V\cdot \mathds{1}_\lambda (P_V)f)\|_{L^2 (M)} \\
    &\lesssim (\lambda 2^l)^{\sigma(q)} (\lambda 2^l)^{\frac{1}{2}} \cdot \lambda^{-2}\cdot 2^{-Nl} (\lambda 2^l)^{\sigma\left(\frac{2n}{n-2} \right)+\frac{1}{2}} \|V\cdot \mathds{1}_\lambda (P_V)f\|_{L^{\frac{2n}{n+2}} (M)} \\
    &\lesssim (\lambda 2^l)^{\sigma(q)} (\lambda 2^l)^{\frac{1}{2}} \cdot \lambda^{-2}\cdot 2^{-Nl} (\lambda 2^l)^{\sigma\left(\frac{2n}{n-2} \right)+\frac{1}{2}}\|V\|_{L^{\frac{n}{2}}(M)}\cdot \lambda^{\sigma\left(\frac{2n}{n-2} \right)}\epsilon^{\frac{1}{2}} \|f\|_{L^2 (M)} \\
    &=\frac{\lambda^{\sigma(q)}}{(\log \lambda)^{\frac{1}{2}}} \cdot (2^l)^{\sigma(q)+\frac{3}{2}-N}\|V\|_{L^{\frac{n}{2}}(M)}\|f\|_{L^2 (M)}.
\end{align*}
Summing over all $l$ with $N\gg 1$ sufficiently large gives the desired bound. The computation for $K_{1, l}$ is similar.

To handle $m_1$, as in Case 4, we divide the sum $\sum_{\lambda_j>2\lambda}$ into the difference between the complete sum
\begin{align}\label{eqn:prop-(8)}
    \sum_{|\tau_k-\lambda|\leq \epsilon} \sum_{\lambda_j} \int_M m_1 (\lambda_j, \tau_k) e_j (x) e_j (z) e_k^V (z) e_k^V (y) V(z)\:dz
\end{align}
and the partial sum
\begin{align}\label{eqn:prop-(9)}
    \sum_{|\tau_k-\lambda|\leq \epsilon} \sum_{\lambda_j\leq 2\lambda} \int_M m_1 (\lambda_j, \tau_k) e_j (x) e_j (z) e_k^V (z) e_k^V (y) V(z)\:dz.
\end{align}
We now consider the partial sum \eqref{eqn:prop-(9)} as before. If $\lambda_j\leq 2\lambda$ and $|s-\lambda|\leq \epsilon$, we have
\begin{align*}
    |m_1 (\lambda_j, s)|+\epsilon|\partial_s m_1 (\lambda_j, s)|\lesssim \lambda^{-2}.
\end{align*}
We write \eqref{eqn:prop-(9)} as $K_1 (x, y)+K_2 (x, y)$, where
\begin{align*}
    & K_1 (x, y)=\sum_{|\tau_k-\lambda|\leq \epsilon} \sum_{\lambda_j\leq 2\lambda} \int_M \int_{\lambda-\epsilon}^{\lambda+\epsilon} \partial_s m_1 (\lambda_j, s) \mathds{1}_{[\lambda-\epsilon, \tau_k]}(s) e_j (x) e_j (z) e_k^V (z) e_k^V (y) V(z)\:dz\:ds, \\
    & K_2 (x, y)=\sum_{|\tau_k-\lambda|\leq \epsilon} \sum_{\lambda_j\leq 2\lambda} \int_M m_1 (\lambda_j, \lambda-\epsilon) e_j (x) e_j (z) e_k^V (z) e_k^V (y) V(z)\:dz.
\end{align*}
By the argument in Case 1 and \eqref{large-window-est}, for any $f\in L^2 (M)$,
\begin{align*}
    \|K_2 f\|_{L^q (M)}&=\|\mathds{1}_{\leq 2\lambda} (P) m_1 (P, \lambda-\epsilon) (V\cdot \mathds{1}_\lambda (P_V)f)\|_{L^q (M)} \\
    &\lesssim \lambda^{\sigma(q)} \cdot \lambda^{\frac{1}{2}} \|\mathds{1}_{\leq 2\lambda} (P) m_1 (P, \lambda-\epsilon) (V\cdot \mathds{1}_\lambda (P_V)f)\|_{L^2 (M)} \\
    &\lesssim \lambda^{\sigma(q)+\frac{1}{2}}\cdot \lambda^{-2} \|\mathds{1}_{\leq 2\lambda} (P) (V\cdot \mathds{1}_\lambda (P_V)f)\|_{L^2 (M)} \\
    &\lesssim \lambda^{\sigma(q)-\frac{3}{2}}\cdot \lambda^{\sigma\left(\frac{2n}{n-2} \right)} \cdot \lambda^{\frac{1}{2}} \|V\cdot \mathds{1}_\lambda (P_V)f\|_{L^{\frac{2n}{n+2}} (M)} \\
    &\lesssim \lambda^{\sigma(q)-\frac{1}{2}} \|V\|_{L^{\frac{n}{2}}(M)} \|\mathds{1}_\lambda (P_V)f\|_{L^{\frac{2n}{n-2}}(M)} \\
    &\lesssim \lambda^{\sigma(q)-\frac{1}{2}}\|V\|_{L^{\frac{n}{2}}(M)} \cdot \lambda^{\sigma\left(\frac{2n}{n-2} \right)}\epsilon^{\frac{1}{2}}\|f\|_{L^2 (M)} \\
    &=\frac{\lambda^{\sigma(q)}}{(\log \lambda)^{\frac{1}{2}}} \|V\|_{L^{\frac{n}{2}}(M)}\|f\|_{L^2 (M)},
\end{align*}
as desired. The computation for $K_1$ is similar.

For the complete sum \eqref{eqn:prop-(8)}, we use the heat kernel Gaussian bounds to calculate the kernel of $m_1 (P, s)$ with $|s-\lambda|\leq \epsilon$ (see, e.g., \cite[(2.1)]{HuangWangZhang2026restriction} and \cite[computations in p.15]{HuangWangZhang2026restriction})
\begin{align*}
    \begin{split}
        \left|\sum_{\lambda_j} m_1 (\lambda_j, s) e_j (x) e_j (y) \right|&\lesssim \int_0^{\lambda^{-2}} \left|\sum_{\lambda_j} e^{-t\lambda_j^2} e_j (x) e_j (y) \right|\:dt \\
        &\lesssim \int_0^{\lambda^{-2}} t^{-\frac{n}{2}} e^{-c d_g (x, y)^2/t}\:dt \\
        &\lesssim \begin{cases}
            \log (2+(\lambda d_g (x, y))^{-1}) (1+\lambda d_g (x, y))^{-N}, & n=2, \\
            d_g (x, y)^{2-n} (1+\lambda d_g (x, y))^{-N}, & n\geq 3,
        \end{cases} \\
        &\lesssim W_n (d_g (x, y)) (1+\lambda d_g (x, y))^{-N},\quad \text{for all } N\in \mathbb{N},
    \end{split}
\end{align*}
where $W_n$ is as in \eqref{setup-Wn}, and so, for $|s-\lambda|\leq \epsilon$,
\begin{align}\label{eqn:prop-(10)}
    |m_1 (P, s) (x, y)|\lesssim W_n (d_g (x, y))(1+\lambda d_g (x, y))^{-N},\quad N\in \mathbb{N}.
\end{align}
Similarly, one can compute, for $|s-\lambda|\leq \epsilon$,
\begin{align*}
    |\partial_s m_1 (P, s)(x, y)|\lesssim W_n (d_g (x, y)) (1+\lambda d_g (x, y))^{-N},\quad \text{for all } N\in \mathbb{N}.
\end{align*}
We write the complete sum \eqref{eqn:prop-(8)} as $K_1 (x, y)+K_2 (x, y)$, where
\begin{align*}
    & K_1 (x, y)=\sum_{|\tau_k-\lambda|\leq \epsilon} \sum_{\lambda_j} \int_M \int_{\lambda-\epsilon}^{\lambda+\epsilon} \partial_s m_1 (\lambda_j, \tau_k) \mathds{1}_{[\lambda-\epsilon, \tau_k]}(s) e_j (x) e_j (z) e_k^V (z) e_k^V (y) V(z)\:dz\:ds, \\
    & K_2 (x, y)=\sum_{|\tau_k-\lambda|\leq \epsilon} \sum_{\lambda_j} \int_M m_1 (\lambda_j, \tau_k) e_j (x) e_j (z) e_k^V (z) e_k^V (y) V(z)\:dz.
\end{align*}
Again, we focus on $K_2$, since the computation for $K_1$ is similar. As in \cite{HuangWangZhang2026restriction}, we dyadically decompose the kernel $m_1 (P, \lambda-\epsilon)(x, y)$ with respect to $d_g (x, y)$ and apply Young's inequality to each piece
\begin{align*}
    m_{1, l}(P, \lambda-\epsilon):=m_1 (P, \lambda-\epsilon) (x, y)\mathds{1}_{d_g (x, y)\approx 2^{-l}}(x, y)
\end{align*}
so that we can obtain, for all $N\in \mathbb{N}$,
\begin{align*}
    \|m_{1, l}(P, \lambda-\epsilon)\|_{L^{p_1}(M)\to L^q (M)}\lesssim 2^{\left(\frac{n}{p_1}-\frac{n}{q}-2\right)l} (1+\lambda 2^{-l})^{-N},\quad \text{where } 1\leq p_1\leq q\leq \infty.
\end{align*}
Summing over the dyadic decomposition, we obtain
\begin{align}\label{ineq:prop-(13)}
    \|m_1 (P, \lambda-\epsilon)\|_{L^{p_1}(M)\to L^q (M)}\lesssim \lambda^{\frac{n}{p_1}-\frac{n}{q}-2}, \quad\text{where } \frac{n}{p_1}-\frac{n}{q}-2<0 \text{ and } 1\leq p_1\leq q\leq \infty.
\end{align}

\subsubsection{\texorpdfstring{Case 5-1: $n=2$ and $V\in \mathcal{K}(M)$}{Case 5-1}}
We recall that Case 5-1 of Proposition \ref{prop:hv-Kato-and-Ln2} for $6<q\leq \infty$ is already proved in \cite[Theorem 5.1]{BlairHuangSireSogge2022UniformSobolev}, so here, we can focus on $2<q\leq 6$. Taking $(n, p_1)=(2,1)$ in \eqref{ineq:prop-(13)}, we have
\begin{align*}
    \|K_2 f\|_{L^q (M)}&=\|m_1 (P, \lambda-\epsilon)(V\cdot \mathds{1}_\lambda (P_V)f)\|_{L^q (M)} \\
    &\lesssim \lambda^{2-\frac{2}{q}-2}\|V\cdot \mathds{1}_\lambda (P_V)f)\|_{L^1 (M)}\\
    &\lesssim \lambda^{-\frac{2}{q}}\|V\|_{L^1(M)} \|\mathds{1}_\lambda (P_V)f\|_{L^{\frac{2n}{n-2}}(M)} \\
    &\lesssim \frac{\lambda^{\frac{1}{2}-\frac{2}{q}}}{(\log \lambda)^{\frac{1}{2}}} \|V\|_{L^1 (M)} \|f\|_{L^2 (M)}\leq \frac{\lambda^{\sigma(q)}}{(\log \lambda)^{\frac{1}{2}}}\|V\|_{L^1 (M)}\|f\|_{L^2 (M)},
\end{align*}
as desired.

\subsubsection{\texorpdfstring{Case 5-2: $n\in \{3, 4\}$, $V\in \mathcal{K}(M)\cap L^{\frac{n}{2}}(M)$, and $2<q<\infty$}{Case 5-2}}
The computation for Case 5-2 is similar to that of Case 5-1, except $q=\infty$. The $q=\infty$ for $n=2$ was treated well in \cite[Theorem 5.1]{BlairHuangSireSogge2022UniformSobolev} by using the perturbation argument (to be specific, see \cite[(5.10)]{BlairHuangSireSogge2022UniformSobolev}), but the computation may not be well translated into the case where $n\geq 3$, so we focus on the exponents $2<q<\infty$ for $n\geq 3$ in Case 5-2.

For $n\in \{3, 4\}$, by \cite[Theorem 1.3]{BlairHuangSireSogge2022UniformSobolev}, we know that Proposition \ref{prop:hv-Kato-and-Ln2} holds for $\frac{2(n+1)}{n-1}<q<\infty$, so we can focus on $2<q\leq \frac{2n}{n-2}$. If we take $p_1=\frac{2n}{n+2}$ and $2<q<\frac{2n}{n-2}$, then by \eqref{ineq:prop-(13)},
\begin{align}\label{case5-2-m1-comp}
    \begin{split}
        \|m_1 (P, \lambda-\epsilon) (V\cdot \mathds{1}_\lambda (P_V)f)\|_{L^q (M)}&\lesssim \lambda^{\frac{n+2}{2}-\frac{n}{q}-2} \|V\cdot \mathds{1}_\lambda (P_V)f\|_{L^{\frac{2n}{n+2}}(M)} \\
        &\leq \lambda^{\frac{n-2}{2}-\frac{n}{q}} \|V\|_{L^{\frac{n}{2}}(M)} \|\mathds{1}_\lambda (P_V)f\|_{L^{\frac{2n}{n-2}}(M)} \\
        &\lesssim \frac{\lambda^{\frac{n-1}{2}-\frac{n}{q}}}{(\log \lambda)^{\frac{1}{2}}} \|V\|_{L^{\frac{n}{2}}(M)} \|f\|_{L^2 (M)} \\
        &\leq \frac{\lambda^{\sigma(q)}}{(\log \lambda)^{\frac{1}{2}}}\|V\|_{L^{\frac{n}{2}}(M)} \|f\|_{L^2 (M)},
    \end{split}
\end{align}
as desired.

\subsubsection{\texorpdfstring{Case 5-3: $n\geq 5$, $V\in \mathcal{K}(M) \cap L^{\frac{n}{2}}(M)$, and $2<q\leq \frac{2n}{n-4}$}{Case 5-3}}\label{sec-case-5-3}
Again, for $n\geq 5$, by \cite[Theorem 1.3]{BlairHuangSireSogge2022UniformSobolev}, we know that the estimate in Proposition \ref{prop:hv-Kato-and-Ln2} holds for $\frac{2(n+1)}{n-1}<q\leq \frac{2n}{n-4}$, and so, we can focus on $2<q<\frac{2n}{n-2}$. The computation is the same as in \eqref{case5-2-m1-comp}.

\subsubsection{\texorpdfstring{Case 5-4: $n\geq 3$, $V\in L^r(M)$ with $r>\frac{n}{2}$, and $2<q\leq \infty$}{Case 5-4}}
For $n\geq 3$, by Case 5-2 and Case 5-3, we know
\begin{align}\label{ineq:prop-(14)}
    \|m_1 (P, \lambda-\epsilon)(V\cdot \mathds{1}_\lambda (P_V)f)\|_{L^q (M)}\lesssim \frac{\lambda^{\sigma(q)}}{(\log\lambda)^{\frac{1}{2}}} \|f\|_{L^2 (M)},\quad 2<q<\frac{2n}{n-2}.
\end{align}
Setting $q=\frac{2(n+1)}{n-1}$ in \eqref{ineq:prop-(14)}, we have
\begin{align}\label{ineq:prop-(15)}
    \|K_2 f\|_{L^{\frac{2(n+1)}{n-1}}(M)}=\|m_1 (P, \lambda-\epsilon)(V\cdot \mathds{1}_\lambda (P_V)f)\|_{L^{\frac{2(n+1)}{n-1}}(M)}\lesssim \frac{ \lambda^{\frac{n-1}{2(n+1)}}}{(\log \lambda)^{\frac{1}{2}}}\|f\|_{L^2 (M)}.
\end{align}
On the other hand, if we take $r=p_1>\frac{n}{2}$ and $q=\infty$, then by \eqref{ineq:prop-(13)} and \eqref{large-window-est},
\begin{align}\label{ineq:m1-L-infty-est}
    \begin{split}
        \|K_2f\|_{L^\infty (M)}=\|m_1(P, \lambda-\epsilon)(V\cdot \mathds{1}_\lambda (P_V)f)\|_{L^\infty (M)}&\lesssim \lambda^{\frac{n}{p_1}-2}\|V\cdot \mathds{1}_\lambda (P_V)f\|_{L^{p_1}(M)} \\
    &\leq \lambda^{\frac{n}{p_1}-2} \|V\|_{L^{p_1}(M)} \|\mathds{1}_\lambda (P_V)f\|_{L^\infty (M)} \\
    &\leq \lambda^{\frac{n}{p_1}-2} \|V\|_{L^{p_1}(M)} \|\mathds{1}_{[\lambda-1, \lambda+1]} (P_V)f\|_{L^\infty (M)} \\
    &\lesssim \lambda^{\frac{n-1}{2}+\frac{n}{p_1}-2} \|V\|_{L^{p_1}(M)}\|f\|_{L^2(M)} \\
    &\leq \frac{\lambda^{\frac{n-1}{2}}}{(\log \lambda)^{\frac{1}{2}}} \|V\|_{L^{p_1}(M)}\|f\|_{L^2(M)} \\
    &=\frac{\lambda^{\sigma(\infty)}}{(\log \lambda)^{\frac{1}{2}}} \|V\|_{L^{p_1}(M)}\|f\|_{L^2(M)}.
    \end{split}
\end{align}
Interpolating \eqref{ineq:prop-(15)} and \eqref{ineq:m1-L-infty-est}, we have
\begin{align*}
    \|m_1 (P, \lambda-\epsilon)(V\cdot \mathds{1}_\lambda (P_V)f)\|_{L^q (M)}\lesssim \frac{\lambda^{\sigma(q)}}{(\log \lambda)^{\frac{1}{2}}} \|f\|_{L^2 (M)},\quad \frac{2(n+1)}{n-1}\leq q\leq \infty.
\end{align*}
Combining this and \eqref{ineq:prop-(14)}, we have the desired bound for all $2<q\leq \infty$. This completes Case 5. We are left to consider Case 2.

\subsection{\texorpdfstring{Case 2: $|\tau_k-\lambda|\leq \epsilon, |\lambda_j-\lambda|\in (2^l, 2^{l+1}], \epsilon\leq 2^l\leq \lambda$}{Case 2}}
We choose a cutoff function $\psi\in C_0^\infty (\mathbb{R})$ such that
\begin{align}\label{setup-psi}
    \psi(t)=1 \text{ if } |t|\leq 2,\quad \text{and} \quad \psi(t)=0 \text{ if } |t|\geq 3.
\end{align}
In Case 2, we write
\begin{align*}
    m(\lambda_j, \tau_k)=-\frac{\chi_\lambda (\tau_k)}{\lambda_j^2-\tau_k^2} \psi(\lambda_j/\lambda).
\end{align*}

\subsubsection{\texorpdfstring{Case 2-1: $n\geq 3$, $V\in L^r (M)$ with $r>\frac{n}{2}$, and $2<q\leq \infty$}{Case 2-1}}
This case can be handled as in Case 1. Indeed, we write
\begin{align*}
    \sum_{|\lambda_j-\lambda|\in (2^l, 2^{l+1}]} \sum_{|\tau)k-\lambda|\lesssim\epsilon} \int_M m(\lambda_j, \tau_k) e_j (x) e_j (z) e_k^V (z) e_k^V (y)V(z)\:dz=K_{1, l}(x, y)+K_{2, l} (x, y),
\end{align*}
where
\begin{align*}
    & K_{1, l}(x, y)=\sum_{|\lambda_j-\lambda|\in (2^l, 2^{l+1}]} \sum_{|\tau)k-\lambda|\lesssim\epsilon} \int_M \int_{\lambda-\epsilon}^{\lambda+\epsilon} \partial_s m(\lambda_j, \tau_k) \mathds{1}_{[\lambda-\epsilon, \tau_k]} (s) e_j (x) e_j (z) e_k^V (z) e_k^V (y)V(z)\:ds\:dz, \\
    & K_{2, l}(x, y)=\sum_{|\lambda_j-\lambda|\in (2^l, 2^{l+1}]} \sum_{|\tau)k-\lambda|\lesssim\epsilon} \int_M m(\lambda_j, \lambda-\epsilon) e_j (x) e_j (z) e_k^V (z) e_k^V (y)V(z)\:dz.
\end{align*}
For $|\lambda_j-\lambda|\in (2^l, 2^{l+1}]$, we have
\begin{align*}
    |m(\lambda_j, s)|+\epsilon|\partial_s m(\lambda_j, s)|\lesssim \lambda^{-1} 2^{-l}.
\end{align*}
As before, we focus on $K_{2, l}$. For any $f\in L^2(M)$, by \eqref{large-window-est} and \eqref{log-window-est}, we have
\begin{align*}
    \|K_{2, l}f\|_{L^q (M)}&=\|\mathds{1}_{\lambda, l}(P)m(P, \lambda-\epsilon)(V\cdot \mathds{1}_\lambda (P_V)f)\|_{L^q (M)} \\
    &\lesssim \lambda^{\sigma(q)} 2^{\frac{l}{2}} \|\mathds{1}_{\lambda, l}(P) m (P, \lambda-\epsilon) (V\cdot \mathds{1}_\lambda (P_V)f)\|_{L^2 (M)} \\
    &\lesssim \lambda^{\sigma(q)} 2^{\frac{l}{2}} (\lambda^{-1}2^{-l})\|\mathds{1}_{\lambda, l}(P) (V\cdot \mathds{1}_\lambda (P_V)f)\|_{L^2 (M)} \\
    &\lesssim \lambda^{\sigma(q)} 2^{\frac{l}{2}} (\lambda^{-1}2^{-l}) \lambda^{\sigma\left(\frac{2n}{n-2}\right)}2^{l/2}\|V\cdot \mathds{1}_\lambda (P_V)f\|_{L^{\frac{2n}{n+2}} (M)} \\
    &=\lambda^{\sigma(q)-\frac{1}{2}}\|V\cdot \mathds{1}_\lambda (P_V)f\|_{L^{\frac{2n}{n+2}} (M)} \\
    &\leq \lambda^{\sigma(q)-\frac{1}{2}+\sigma\left(\frac{2nr}{(n+2)r-2n}\right)} \|V\|_{L^r (M)} \|f\|_{L^2 (M)},\quad r>\frac{n}{2}.
\end{align*}
Let $q^*=\frac{2nr}{(n+2)r-2n}$. One can simply check that $q^*<\frac{2n}{n-2}$ if and only if $r>\frac{n}{2}$, and so, we have $-\frac{1}{2}+\sigma\left(\frac{2nr}{(n+2)r-2n}\right)<-\epsilon'$ for any small $\epsilon'>0$. We thus have that
\begin{align*}
    \|K_{2, l}f\|_{L^q (M)}&\lesssim \lambda^{\sigma(q)-\epsilon'} \|V\|_{L^r (M)} \|f\|_{L^2 (M)},\quad r>\frac{n}{2},\quad \epsilon'>0.
\end{align*}
Thanks to $\lambda^{-\epsilon'}$, even if we sum over $\epsilon\leq 2^l\leq \lambda$, we have a better bound when $r>\frac{n}{2}$. This proves Case 2-1.

We note that this argument cannot be appliable when $r=\frac{n}{2}$. Indeed, if $r=\frac{n}{2}$, then $\epsilon'=0$, and this is not helpful to remove the log-loss for $V\in \mathcal{K}(M)\cap L^{\frac{n}{2}}(M)$, so we need a different argument when $V\in \mathcal{K}(M)\cap L^{\frac{n}{2}} (M)$. To this end, we combine the arguments in \cite{HuangWangZhang2026restriction} and uniform Sobolev-type arguments used in the existing literature.

\subsubsection{\texorpdfstring{Case 2-2: Preliminary reduction to $n\geq 2$, and $V\in \mathcal{K}(M) \cap L^{\frac{n}{2}} (M)$}{Case 2-2}}
We first recall that the cases
\begin{itemize}
    \item $n=2$, $V\in \mathcal{K}(M)$, and $6\leq q\leq \infty$,
    \item $n\in \{3, 4\}$, $V\in L^{\frac{n}{2}}(M)$, and $\frac{2(n+1)}{n-1}\leq q<\infty$,
    \item $n\geq 5$, $V\in L^{\frac{n}{2}}(M)$, and $\frac{2(n+1)}{n-1}\leq q\leq \frac{2n}{n-4}$
\end{itemize}
have already been considered in \cite{BlairHuangSireSogge2022UniformSobolev} (and for critical exponent $q=\frac{2(n+1)}{n-1}$, we combine the arguments of \cite{BlairHuangSireSogge2022UniformSobolev} and \cite{HuangSogge2025Curvature, BlairHuangSogge2022Improved, BlairSogge2019logarithmic}), and this is a reason why we can focus on the following cases.
\begin{itemize}
    \item Case 2-2-1: $n=2$, $V\in \mathcal{K}(M)$, and $2<q<6$, which will be proved in \S \ref{subsec-Case-2-2-1}, and
    \item Case 2-2-2: $n\geq 3$, $V\in \mathcal{K}(M)\cap L^{\frac{n}{2}}(M)$, and $\frac{2n}{n-1}<q<\frac{2n}{n-3}$, which will be proved in \S \ref{sec-case-2-2-2}.
\end{itemize}
In this subsection, we collect ingredients to handle Case 2-2-1 and Case 2-2-2. We write
\begin{align*}
    m_1 (\lambda_j, \tau_k)=\frac{-\chi_\lambda (\tau_k)}{\lambda_j^2-\tau_k^2+i\lambda} \psi(\lambda_j/\lambda),\quad m_2 (\lambda_j, \tau_k)=m(\lambda_j, \tau_k)-m_1(\lambda_j, \tau_k).
\end{align*}
We first consider $m_2$. If $|s-\lambda|\leq \epsilon$ and $|\lambda_j-\lambda|\in (2^l, 2^{l+1}]$, we have
\begin{align*}
    |m_2 (\lambda_j, s)|+\epsilon|\partial_s m_2 (\lambda_j, s)|\lesssim \lambda^{-1} 2^{-2l}.
\end{align*}
As before, we can write
\begin{align*}
    \sum_{|\lambda_j-\lambda|\in (2^l, 2^{l+1}]} \sum_{|\tau_k-\lambda|\leq \epsilon} \int_M m_2 (\lambda_j, \tau_k) e_j (x) e_j (z) e_k^V (z) e_k^V(y) V(z)\:dz=K_{1, l}(x, y)+K_{2, l}(x, y),
\end{align*}
where
\begin{align*}
    & K_{1, l}(x, y)=\sum_{|\lambda_j-\lambda|\in (2^l, 2^{l+1}]} \sum_{|\tau_k-\lambda|\leq \epsilon} \int_M \int_{\lambda-\epsilon}^{\lambda+\epsilon} \partial_s m_2 (\lambda_j, s) \mathds{1}_{[\lambda-\epsilon, \tau_k]}(s) e_j (x) e_j (z) e_k^V (z) e_k^V(y) V(z)\:dz\:ds, \\
    & K_{2, l}(x, y)=\sum_{|\lambda_j-\lambda|\in (2^l, 2^{l+1}]} \sum_{|\tau_k-\lambda|\leq \epsilon} \int_M m_2 (\lambda_j, \lambda-\epsilon) e_j (x) e_j (z) e_k^V (z) e_k^V(y) V(z)\:dz.
\end{align*}
We only estimate $K_{2, l}$, since the computation for $K_{1, l}$ is similar. For any $f\in L^2 (M)$, by the argument in Case 1,
\begin{align*}
    \|K_{2, l}f\|_{L^q (M)}&=\|\mathds{1}_{\lambda, l} (P) m_2(P, \lambda-\epsilon) (V\cdot \mathds{1}_\lambda (P_V)f)\|_{L^q (M)} \\
    &\lesssim \lambda^{\sigma(q)}(2^l)^{\frac{1}{2}} \|\mathds{1}_{\lambda, l}(P) m_2 (P, \lambda-\epsilon) (V\cdot \mathds{1}_\lambda (P_V)f)\|_{L^2 (M)} \\
    &\lesssim \lambda^{\sigma(q)} (2^l)^{\frac{1}{2}}\cdot \lambda^{-1}2^{-2l} \|\mathds{1}_{\lambda, l}(P) (V\cdot \mathds{1}_\lambda (P_V)f)\|_{L^2 (M)} \\
    &\lesssim \lambda^{\sigma(q)}\cdot \lambda^{-1}2^{-\frac{3l}{2}}\cdot \lambda^{\sigma\left(\frac{2n}{n-2} \right)}2^{\frac{l}{2}} \|V\cdot \mathds{1}_\lambda (P_V)f\|_{L^{\frac{2n}{n+2}}(M)} \\
    &\leq \lambda^{\sigma(q)-\frac{1}{2}}\cdot 2^{-l} \|V\|_{L^{\frac{n}{2}}(M)} \|\mathds{1}_\lambda (P_V)f\|_{L^{\frac{2n}{n-2}}(M)} \\
    &\lesssim \frac{\lambda^{\sigma(q)}}{(\log \lambda)^{\frac{1}{2}}} \cdot 2^{-l}\|V\|_{L^{\frac{n}{2}}(M)} \|f\|_{L^2 (M)}.
\end{align*}
Summing over all $l$ gives the desired bound.

We next consider $m_1$. We consider all $\lambda_j\leq 3\lambda$ here, which follows from the condition of Case 2 (and from the support property of $\psi$). We write
\begin{align*}
    \sum_{\lambda_j\leq 3\lambda} \sum_{|\tau_k-\lambda|\leq \epsilon} \int_M m_1 (\lambda_j, \tau_k) e_j (x) e_j(z) e_k^V (z) e_k^V (y) V(z)\:dz=K_1 (x, y)+K_2 (x, y),
\end{align*}
where
\begin{align*}
    & K_1 (x, y)=\sum_{\lambda_j\leq 3\lambda} \sum_{|\tau_k-\lambda|\leq \epsilon} \int_M \int_{\lambda-\epsilon}^{\lambda+\epsilon} \partial_s m_1 (\lambda_j, s) \mathds{1}_{[\lambda-\epsilon, \tau_k]}(s) e_j (x) e_j(z) e_k^V (z) e_k^V (y) V(z)\:dz\:ds, \\
    & K_2 (x, y)=\sum_{\lambda_j\leq 3\lambda} \sum_{|\tau_k-\lambda|\leq \epsilon} \int_M m_1 (\lambda_j, \lambda-\epsilon) e_j (x) e_j(z) e_k^V (z) e_k^V (y) V(z)\:dz.
\end{align*}
As before, we focus on estimating $K_2$. Let $\psi_1\in C_0^\infty (\mathbb{R})$ be such that
\begin{align}\label{setup-psi-1}
    \psi_1 (t)=1 \text{ for } |t|\leq 3,\quad \text{and} \quad \psi_1 (t)=0 \text{ for } |t|\geq 4.
\end{align}
Suppose we have
\begin{align}\label{ineq:prop-(16)}
    \|(-\Delta_g-(\lambda+i\epsilon)^2)^{-1} \psi_1(P/\lambda)\|_{L^{\frac{2n}{n+2}}(M)\to L^q (M)}\lesssim \lambda^{\sigma(q)-\frac{1}{2}}(\log \lambda)^{\epsilon'},\quad \text{for any } \epsilon'>0.
\end{align}
One may check that the symbol $\xi\mapsto\frac{\xi^2-(\lambda+i\epsilon)^2}{\xi^2-s^2+i\lambda}\psi(\xi/\lambda)$ belongs to the symbol class $S^0$, and so, by \cite[Corollary 4.3.2]{Sogge2017FourierBook}, one would have $\|(-\Delta_g-(\lambda+i\epsilon)^2)m_1(P, \lambda-\epsilon)\|_{L^q (M)\to L^q (M)}\leq C_q$. We also note that by \eqref{setup-psi} and \eqref{setup-psi-1}, we have $\psi(t)(1-\psi_1(t))=0$ for any $t\in \mathbb{R}$, and so, the contribution of the operator $\psi(P/\lambda)(1-\psi_1 (P/\lambda))$ may be negligible. By construction, $m_1 (P, \lambda-\epsilon)$ contains $\psi(P/\lambda)$ in it, and so, this implies that the contribution of $(1-\psi_1(P/\lambda))m_1 (P, \lambda-\epsilon)$ may also be negligible. With this in mind, if \eqref{ineq:prop-(16)} is true for $n\geq 3$, then
\begin{align}\label{ineqs:result-(16)}
    \begin{split}
        \|m_1 (P, \lambda-\epsilon)(V\cdot \mathds{1}_\lambda (P_V)f)\|_{L^q (M)}&\lesssim \|m_1 (P, \lambda-\epsilon) \psi_1 (P/\lambda)(V\cdot \mathds{1}_\lambda (P_V)f)\|_{L^q (M)} \\
        &\lesssim \|(-\Delta_g-(\lambda+i\epsilon)^2)^{-1}\psi_1 (P/\lambda)(V\cdot \mathds{1}_\lambda (P_V)f)\|_{L^q (M)}\\
        &\lesssim \lambda^{\sigma(q)-\frac{1}{2}}(\log \lambda)^{\epsilon'} \|V\cdot \mathds{1}_\lambda (P_V)f\|_{L^{\frac{2n}{n+2}}(M)} \\
        &\leq \lambda^{\sigma(q)-\frac{1}{2}}(\log \lambda)^{\epsilon'} \|V\|_{L^{\frac{n}{2}}(M)}\|\mathds{1}_\lambda (P_V)f\|_{L^{\frac{2n}{n-2}}(M)} \\
        &\lesssim \lambda^{\sigma(q)-\frac{1}{2}}(\log \lambda)^{\epsilon'}\epsilon^{\frac{1}{2}} \|V\|_{L^{\frac{n}{2}}}\lambda^{\sigma\left(\frac{2n}{n-2} \right)}\|f\|_{L^2 (M)} \\
        &=\frac{\lambda^{\sigma(q)}}{(\log \lambda)^{\frac{1}{2}-\epsilon'}} \|V\|_{L^{\frac{n}{2}}}\|f\|_{L^2 (M)},
    \end{split}
\end{align}
as desired. Some cases can be treated by \eqref{ineq:prop-(16)}, but in other cases, we need different estimates, which will be explained below for $n=2$ and for some ``small'' exponents, say, $\frac{2n}{n-1}<q<\frac{2n}{n-3}$ in Case 2-2-2, \S \ref{sec-case-2-2-2}

We first find and show cases where \eqref{ineq:prop-(16)} is helpful. As in \cite{BourgainShaoSoggeYao2015Resolvent} and \cite{BlairHuangSireSogge2022UniformSobolev}, we can write
\begin{align*}
    (-\Delta_g-(\lambda+i\epsilon)^2)^{-1}=\frac{i}{\lambda+i\epsilon} \int_0^\infty e^{i\lambda t} e^{-\epsilon t} \cos (tP)\:dt.
\end{align*}
Let $\rho\in C_0^\infty (\mathbb{R})$ be such that
\begin{align*}
    \mathds{1}_{\left[-\frac{\epsilon_0}{2}, \frac{\epsilon_0}{2}\right]}\leq \rho\leq \mathds{1}_{[-\epsilon_0, \epsilon_0]},\quad \epsilon_0=\min\left\{1, \frac{1}{2}\mathrm{Inj}(M)\right\},
\end{align*}
where $\mathrm{Inj}(M)$ is the injectivity radius of $M$. We choose $\beta\in C_0^\infty (\mathbb{R})$ so that we can write
\begin{align*}
    |\beta(t)|\leq 1,\quad \mathrm{supp}(\beta)\subset [1/2, 2],\quad \sum_{j\in \mathbb{Z}} \beta(2^{-j}t)=1 \text{ for } t>0.
\end{align*}
We let
\begin{align*}
    \beta_0 (t)=1-\sum_{j=1}^\infty \beta(2^{-j}t) \text{ for } t>0,\quad \mathrm{supp}(\beta_0)\subset [-4, 4].
\end{align*}
We set
\begin{align*}
    & T_0 (\tau)=\frac{i}{\lambda+i\epsilon}\int_0^\infty \beta_0 (\lambda t) \rho(\epsilon t) e^{i\lambda t} e^{-\epsilon t} \cos (t\tau)\:dt, \\
    & T_j (\tau)=\frac{i}{\lambda+i\epsilon}\int_0^\infty \beta(\lambda 2^{-j}t) \rho(\epsilon t) e^{i\lambda t} e^{-\epsilon t} \cos (t\tau)\:dt,\quad 1\leq j\leq \lfloor \log_2 (\lambda/\epsilon) \rfloor, \\
    & R_\lambda (\tau)=\frac{i}{\lambda+i\epsilon} \int_0^\infty (1-\rho(\epsilon t)) e^{i\lambda t} e^{-\epsilon t} \cos (t\tau)\:dt.
\end{align*}
Using this, we decompose
\begin{align}\label{resolvent-decomp}
    (-\Delta_g-(\lambda+i\epsilon)^2)^{-1}\psi_1(P/\lambda)=\left(T_0(P)+\sum_{1<2^j\leq \lambda} T_j (P)+\sum_{\lambda<2^j\leq \lambda/\epsilon} T_j (P)+R_\lambda (P) \right)\psi_1(P/\lambda).
\end{align}
We consider the terms on the right hand side separately. If $\tau\geq 0$, then, for $N\in \mathbb{N}$,
\begin{align*}
    & |R_\lambda (\tau) \psi_1(\tau/\lambda)|\leq C_N \lambda^{-1} \epsilon^{-1} (1+\epsilon^{-1}|\lambda-\tau|)^{-N}, \\
    & |T_j (\tau)\psi_1(\tau/\lambda)|\leq C_N\lambda^{-2} 2^j (1+\lambda^{-1}2^j|\lambda-\tau|)^{-N},\quad \text{for } 1\leq 2^j\leq \lambda/\epsilon.
\end{align*}
By this, \eqref{log-window-est}, \eqref{setup-psi-1}, and duality,
\begin{align*}
    &\|R_\lambda (P)\psi_1(P/\lambda)f\|_{L^q (M)} \\
    &\leq \sum_{k=0}^\infty \|\mathds{1}_{[k\epsilon, (k+1)\epsilon]}(P)\circ R_\lambda(P)\psi_1(P/\lambda)\circ \mathds{1}_{[k\epsilon, (k+1)\epsilon]} (P)f\|_{L^q (M)} \\
    &\lesssim \sum_{0\leq k\leq 4\lambda\epsilon^{-1}} ((k+1)\epsilon)^{\sigma(q)} \epsilon^{\frac{1}{2}} \|R_\lambda (P)\psi_1(P/\lambda)\circ \mathds{1}_{[k\epsilon, (k+1)\epsilon]} (P)f\|_{L^2 (M)} \\
    &\leq \sum_{0\leq k\leq 4\lambda\epsilon^{-1}} ((k+1)\epsilon)^{\sigma(q)} \epsilon^{\frac{1}{2}} \left(\sup_{\tau\in [k\epsilon, (k+1)\epsilon]} R_\lambda (\tau)\psi_1(\tau/\lambda) \right)\|\mathds{1}_{[k\epsilon, (k+1)\epsilon]} (P)f\|_{L^2 (M)} \\
    &\lesssim \sum_{0\leq k\leq 4\lambda\epsilon^{-1}} ((k+1)\epsilon)^{\sigma(q)} \epsilon^{\frac{1}{2}} \cdot \lambda^{-1}\epsilon^{-1}\left(\sup_{\tau\in [k\epsilon, (k+1)\epsilon]} (1+\epsilon^{-1} |\lambda-\tau|)^{-N} \right) \cdot ((k+1)\epsilon)^{\sigma\left(\frac{2n}{n-2}\right)}\epsilon^{\frac{1}{2}}\|f\|_{L^{\frac{2n}{n+2}}(M)} \\
    &\lesssim \lambda^{\sigma(q)-\frac{1}{2}}\sum_{0\leq k\leq 4\lambda\epsilon^{-1}} \sup_{\tau\in [k\epsilon, (k+1)\epsilon]} (1+\epsilon^{-1} |\lambda-\tau|)^{-N} \|f\|_{L^{\frac{2n}{n+2}}(M)} \\
    &\lesssim \lambda^{\sigma(q)-\frac{1}{2}} \|f\|_{L^{\frac{2n}{n+2}}(M)},
\end{align*}
which satisfies \eqref{ineq:prop-(16)} without a $(\log \lambda)^{\epsilon'}$-loss, resulting in no loss of $(\log \lambda)^{\epsilon'}$ in \eqref{ineqs:result-(16)}. Similarly, one can compute
\begin{align}\label{ineq:prop-(18)}
    \|T_j (P)\psi_1(P/\lambda)f\|_{L^q (M)}\lesssim \lambda^{\sigma(q)-\frac{1}{2}} \|f\|_{L^{\frac{2n}{n+2}}(M)},\quad 0\leq j\leq \lambda/\epsilon,
\end{align}
and so,
\begin{align}\label{ineq:Tj-log(log)-loss}
    \begin{split}
        \sum_{\lambda<2^j\leq \lambda/\epsilon} \|T_j (P)\psi_1(P/\lambda) f\|_{L^q (M)}\lesssim \lambda^{\sigma(q)-\frac{1}{2}} \log (\log \lambda) \|f\|_{L^{\frac{2n}{n+2}}(M)}.
    \end{split}
\end{align}
This satisfies \eqref{ineq:prop-(16)} for $n\geq 3$, and this estimate \eqref{ineq:Tj-log(log)-loss} is the reason why we have a $(\log \lambda)^{\epsilon'}$-loss in some cases for $n\geq 3$ and $V\in \mathcal{K}(M)\cap L^{\frac{n}{2}} (M)$ in \eqref{setup-kappa}.

If $n=2$, we shall use an estimate other than \eqref{ineq:prop-(16)} to obtain a better estimate than \eqref{ineq:Tj-log(log)-loss}, since we want to remove the $(\log \lambda)^{\epsilon'}$-loss for $2<q<6$ when $n=2$. When $n=2$, if $M$ has negative sectional curvatures, by (the proof of) \cite[(3.25)]{BlairHuangSireSogge2022UniformSobolev} (see also \cite[(4.7)]{HuangWangZhang2026restriction}) we have
\begin{align*}
    \left|\sum_{\lambda<2^j<\lambda/\epsilon} T_j (P) \psi_1(P/\lambda)(x, y) \right|\leq C_{\delta_0} \lambda^{-\frac{1}{2}+\delta_0},\quad \text{for any } \delta_0>0.
\end{align*}
We note that $\frac{2n}{n+2}=1$ when $n=2$. By this and Young's inequality, we have that, for $q>2$,
\begin{align*}
    \sum_{\lambda<2^j\leq \lambda/\epsilon} \|T_j (P)\psi_1(P/\lambda)f\|_{L^q (M)}\lesssim \lambda^{-\frac{1}{2}+\delta_0}\|f\|_{L^1 (M)}\leq \lambda^{\sigma(q)-\frac{1}{2}} \|f\|_{L^{\frac{2n}{n+2}}(M)},
\end{align*}
which still satisfies the bound in \eqref{ineq:prop-(16)} without the loss $(\log \lambda)^{\epsilon'}$. In the last inequality, we used the fact that $\sigma(q)>0$ for $q>2$ and for any such fixed $q$, we can choose $\delta_0>0$ small enough so that $\delta_0<\sigma(q)$.

Note that the case $j=0$ is already considered in \eqref{ineq:prop-(18)}, so we are left to consider the case $1<2^j\leq \lambda$ for $T_j (P) \psi_1(P/\lambda)$ in \eqref{resolvent-decomp} to satisfy the bound in \eqref{ineqs:result-(16)}. By the proof of \cite[(2.23)]{ShaoYao2014UniformSobolev} (see also \cite[(4.8)]{HuangWangZhang2026restriction}), we can write
\begin{align*}
    T_j (P)\psi_1(P/\lambda)(x, y)=S_j (x, y)+W_j (x, y),
\end{align*}
where
\begin{align*}
    & S_j (x, y)=\lambda^{\frac{n-3}{2}}(\lambda^{-1}2^j)^{-\frac{n-1}{2}} e^{i\lambda d_g (x, y)} a_j (x, y), \\
    & W_j (x, y)=O(\lambda^{-1}(\lambda^{-1}2^j)^{1-n}\mathds{1}_{d_g (x, y)<4(\lambda^{-1}2^j)}(x, y)),
\end{align*}
and $a_j\in C_0^\infty$ satisfies
\begin{align}\label{aj-property}
    \begin{split}
        & \mathrm{supp}(a_j)\subset \left\{d_g (x, y)\in \left( \lambda^{-1} 2^{j-2}, \lambda^{-1}2^{j+2} \right)\right\}, \\
        & |\partial_{s, y}^\alpha a_j (x, y)|\leq C_\alpha (\lambda^{-1}2^j)^{-|\alpha|},\quad \text{for all multi-indices } \alpha.
    \end{split}
\end{align}
For $W_j$, we apply Young's inequality. We note that, for $\frac{1}{r}=1-\left(\frac{n+2}{2n}-\frac{1}{q} \right)$,
\begin{align*}
    \left(\int |W_j (x, y)|^r\:dx \right)^{\frac{1}{r}}, \left(\int |W_j (x, y)|^r\:dy \right)^{\frac{1}{r}} &\lesssim \lambda^{-1} (\lambda^{-1} 2^j)^{1-n} \left(\int_{|z|\lesssim \lambda^{-1}2^j, z\in \mathbb{R}^n} 1\cdot dz \right)^{\frac{1}{r}} \\
    &\lesssim \lambda^{\frac{n-2}{2}-\frac{n}{q}}(2^j)^{\frac{n}{q}-\frac{n}{2}}.
\end{align*}
Since $q>2$, summing over all $j$, we obtain $\sum_j\|W_j\|_{L^q(M)\to L^2 (M)}\lesssim \lambda^{\frac{n-2}{2}-\frac{n}{q}}\leq \lambda^{\sigma(q)-\frac{1}{2}}$, which satisfies \eqref{ineq:prop-(16)} without a $(\log \lambda)^{\epsilon'}$-loss.

For $S_j$, we shall find estimates other than \eqref{ineq:prop-(16)} by using scaling arguments as usual. When we consider $S_j f(x)$, since the kernel $S_j (x, y)$ vanishes when $d_g (x, y)\not\in [\lambda^{-1}2^{j-2}, \lambda^{-1}2^{j+2}]$, using a partition of unity if necessary, we may assume that
\begin{align}\label{cond:supp-f}
    \mathrm{supp}(f)\subset B_{\lambda^{-1}2^j}(0),
\end{align}
where $B_{\lambda^{-1}2^j}(0)$ denotes the ball of radius $\lambda^{-1}2^j$ centered at the origin. If we set
\begin{align*}
    & x=\lambda^{-1}2^j X,\quad y=\lambda^{-1}2^j Y,\quad f_j (Y)=f(\lambda^{-1}2^j Y),\\
    & d_j (X, Y)=(\lambda 2^{-j})d_g (\lambda^{-1}2^j X, \lambda^{-1}2^j Y),\quad \tilde{a}_j (X, Y)=a_j (\lambda^{-1}2^j X, \lambda^{-1}2^j Y),
\end{align*}
then we can write
\begin{align*}
    S_j f(x)&=\lambda^{\frac{n-3}{2}}(\lambda^{-1}2^j)^{-\frac{n-1}{2}}\int e^{i\lambda d_g (x, y)} a_j (x, y)f(y)\: dy \\
    &=\lambda^{\frac{n-3}{2}}(\lambda^{-1}2^j)^{-\frac{n-1}{2}}\int e^{i\lambda d_g (\lambda^{-1}2^jX, \lambda^{-1}2^j Y)} a_j (\lambda^{-1}2^j X, \lambda^{-1}2^j Y)f(\lambda^{-1}2^j Y) (\lambda^{-1}2^j)^n\:dY \\
    &=:\lambda^{\frac{n-3}{2}}(\lambda^{-1}2^j)^{\frac{n+1}{2}}\tilde{S}_j f_j (X),
\end{align*}
where
\begin{align*}
    \tilde{S}_j f_j (X)=\int e^{i 2^j d_j (X, Y)} \tilde{a}_j (X, Y)f_j (Y)\:dY.
\end{align*}
By \eqref{cond:supp-f}, we may assume
\begin{align*}
    \mathrm{supp}(f_j)\subset B_1 (0).
\end{align*}
We also note that by the size estimates in \eqref{aj-property} and the scalings, we have
\begin{align*}
    |\partial_{X, Y}^\alpha \tilde{a}_j (X, Y)|\leq C_\alpha,\quad \text{for all } \alpha.
\end{align*}
Moreover, we note that $d_j (X, Y)$ is the Riemannian distance function between $X$ and $Y$ with a stretch metric $g_{ij}(\lambda^{-1}2^j X)$, that is, $d_j (X, Y)$ satisfies the $n\times n$ Carleson-Sj\"olin condition (cf. \cite[Lemma 5.1.3]{Sogge2017FourierBook}). With this in mind, by \cite{Sogge1988Fourier} (see also the proof of \cite[Lemma 5.1.3]{Sogge2017FourierBook}), that
\begin{align}\label{ineq:Sogge-scaling}
    \begin{split}
        \|\tilde{S}_j f_j\|_{L_X^q (M)}\lesssim (2^j)^{-\frac{n-1}{2}+\sigma(q)}\|f_j\|_{L_Y^2 (M)},\quad 2<q\leq \infty.
    \end{split}
\end{align}
On the other hand, since $|\tilde{S}_j (X, Y)|\lesssim 1$, by Young's inequality, we also have a trivial $L_Y^p (M)\to L_X^q (M)$ estimate
\begin{align}\label{ineq:trivial-scaling}
    \|\tilde{S}_j f_j\|_{L_X^q (M)}\lesssim \|f_j\|_{L_Y^p (M)},\quad 1\leq p\leq q\leq \infty.
\end{align}
We are now ready to prove Case 2-2-1, and Case 2-2-2.

\subsubsection{\texorpdfstring{Case 2-2-1: $n=2$, $V\in \mathcal{K}(M)$, and $2<q<6$}{Case 2-2-1}}\label{subsec-Case-2-2-1}
As above, we note that $\frac{2n}{n+2}=1$ when $n=2$. By \eqref{ineq:trivial-scaling}, when $n=2$,
\begin{align*}
    \|\tilde{S}_j f_j\|_{L_X^q (M)}\lesssim \|f_j\|_{L_Y^1 (M)}.
\end{align*}
Unpacking the definition of $S_j$, we have
\begin{align*}
    \|S_j f\|_{L_x^q (M)}&=\lambda^{-\frac{1}{2}}(\lambda^{-1}2^j)^{\frac{3}{2}+\frac{2}{q}}\|\tilde{S}_jf_j\|_{L_Y^q (M)} \\
    &\lesssim \lambda^{-\frac{1}{2}}(\lambda^{-1}2^j)^{\frac{3}{2}+\frac{2}{q}}\|f_j\|_{L^1 (M)} \\
    &=\lambda^{-\frac{1}{2}}(\lambda^{-1}2^j)^{-\frac{1}{2}+\frac{2}{q}}\|f\|_{L^1 (M)} \\
    &=\lambda^{-\frac{2}{q}}(2^j)^{-\frac{1}{2}+\frac{2}{q}}\|f\|_{L^1 (M)}.
\end{align*}
Thus, we would have \eqref{ineq:prop-(16)}, if we could show that
\begin{align*}
    \sum_{1<2^j\leq \lambda} \lambda^{-\frac{2}{q}}(2^j)^{-\frac{1}{2}+\frac{2}{q}}\leq \lambda^{\sigma(q)-\frac{1}{2}},\quad 2<q<6.
\end{align*}
By a direct computation, if $4<q<6$, then
\begin{align*}
    \sum_{1<2^j\leq \lambda} \lambda^{-\frac{2}{q}}(2^j)^{-\frac{1}{2}+\frac{2}{q}}\lesssim \lambda^{-\frac{2}{q}}=\lambda^{\left(\frac{1}{2}-\frac{2}{q}\right)-\frac{1}{2}}\leq \lambda^{\frac{1}{2}\left(\frac{1}{2}-\frac{1}{q}\right)-\frac{1}{2}}=\lambda^{\sigma(q)-\frac{1}{2}}.
\end{align*}
If $q=4$, then, since $\sigma(4)=\frac{1}{8}$, we have
\begin{align*}
    \sum_{1<2^j\leq \lambda} \lambda^{-\frac{2}{q}}(2^j)^{-\frac{1}{2}+\frac{2}{q}}=\sum_{1<2^j\leq \lambda} \lambda^{-\frac{1}{2}}\lesssim \lambda^{-\frac{1}{2}}\log\lambda\lesssim \lambda^{\frac{1}{8}-\frac{1}{2}}=\lambda^{\sigma(4)-\frac{1}{2}}.
\end{align*}
If $2<q<4$, then
\begin{align*}
    \sum_{1<2^j\leq \lambda} \lambda^{-\frac{2}{q}}(2^j)^{-\frac{1}{2}+\frac{2}{q}}\lesssim \lambda^{-\frac{1}{2}}<\lambda^{\frac{1}{2}\left(\frac{1}{2}-\frac{1}{q}\right)-\frac{1}{2}}=\lambda^{\sigma(q)-\frac{1}{2}}.
\end{align*}
Hence, \eqref{ineq:prop-(16)} without a $(\log \lambda)^{\epsilon'}$-loss holds for Case 2-2-1, resulting in \eqref{ineqs:result-(16)} without a $(\log \lambda)^{\epsilon'}$-loss, as desired.

\subsubsection{\texorpdfstring{Case 2-2-2: $n\geq 3$, $V\in \mathcal{K}(M)\cap L^{\frac{n}{2}}(M)$, and $\frac{2n}{n-1}<q<\frac{2n}{n-3}$}{Case 2-2-2}}\label{sec-case-2-2-2}
In Case 2-2-2, we need some variants of \eqref{ineq:prop-(16)} to conclude Case 2-2-2. In fact, the variants may not be too much different from \eqref{ineq:prop-(16)} in the sense that the variants shall be on the ``uniform Sobolev line'', i.e., $\frac{n}{p}-\frac{n}{q}=2$. For simplicity, we consider $n\geq 4$, but similar arguments work well for $n=3$ as well.

We start by noting that the adjoint of the operator $\tilde{S}_j$ can be written as
\begin{align*}
    \tilde{S}_j^* g (Y)=\int_M e^{-i2^j d_j (X,Y)}\overline{\tilde{a}_j (X, Y)}g(X)\:dX.
\end{align*}
We know by \cite[Lemma 5.1.3]{Sogge2017FourierBook} that the phase function $(X, Y)\mapsto d_j (X, Y)$ satisfies the $n\times n$ Carleson-Sj\"olin condition, so does the phase function $(Y, X)\mapsto -d_j (X, Y)$, which is the phase function of $\tilde{S}_j^*$. Thus, by the proof of \eqref{ineq:Sogge-scaling},
\begin{align*}
    \|\tilde{S}_j^* f_j \|_{L_X^q (M)}\lesssim (2^j)^{-\frac{n-1}{2}+\sigma(q)}\|f_j\|_{L_X^2 (M)},\quad 2<q\leq \infty.
\end{align*}
Setting $q=\frac{2(n+1)}{n-1}$ here, we have
\begin{align*}
    \|\tilde{S}_j^* f_j \|_{L^{\frac{2(n+1)}{n-1}}(M)}\lesssim (2^j)^{-\frac{n(n-1)}{2(n+1)}}\|f_j\|_{L^2 (M)}.
\end{align*}
By this and duality,
\begin{align}\label{ineq:Sj-tilde-2(n+1)/(n+3)}
    \|\tilde{S}_j f_j \|_{L^2 (M)}\lesssim (2^j)^{-\frac{n(n-1)}{2(n+1)}}\|f_j\|_{L^{\frac{2(n+1)}{n+3}}(M)}.
\end{align}
By \eqref{ineq:trivial-scaling}, we have a trivial $L^1 (M)\to L^\infty (M)$ bound
\begin{align}\label{ineq:Sj-tilde-trivial}
    \|\tilde{S}_j f_j \|_{L^\infty (M)}\lesssim \|f_j\|_{L^1 (M)}.
\end{align}
Interpolating \eqref{ineq:Sj-tilde-2(n+1)/(n+3)} and \eqref{ineq:Sj-tilde-trivial} yields
\begin{align}\label{p0-q0-scaled-est}
    \|\tilde{S}_j f_j\|_{L^{\frac{2n^2}{n^2-n-2}}(M)}\lesssim (2^j)^{-\frac{(n-1)(n-2)}{2n}}\|f_j\|_{L^{\frac{2n^2}{n^2+3n-2}}(M)}.
\end{align}
Here, we note that the pair $(p, q)=\left(\frac{2n^2}{n^2+3n-2}, \frac{2n^2}{n^2-n-2}\right)$ satisfies $\frac{n}{p}-\frac{n}{q}=2$. Also by \eqref{ineq:trivial-scaling}, we have
\begin{align*}
    \|\tilde{S}_j f_j\|_{L^{\frac{n}{n-2}}(M)}\lesssim \|f_j\|_{L^1 (M)}.
\end{align*}
Interpolating this with \eqref{p0-q0-scaled-est}, we have
\begin{align}\label{2n/(n-1)-on-Sobolev}
    \|\tilde{S}_j f_j\|_{L^{\frac{2n}{n-1}}(M)}\lesssim (2^j)^{-\frac{n-3}{2}}\|f_j\|_{\frac{2n}{n+3}(M)}.
\end{align}
We also note that the pair $(p, q)=\left(\frac{2n}{n+3}, \frac{2n}{n-1} \right)$ satisfies $\frac{n}{p}-\frac{n}{q}=2$. Again, interpolating \eqref{2n/(n-1)-on-Sobolev} with \eqref{p0-q0-scaled-est}, we have, for some $\alpha(p, q)>0$,
\begin{align}\label{Sj-tilde-on-Sobolev}
    \|\tilde{S}_j f_j\|_{L^q (M)}\lesssim (2^j)^{-\frac{n-3}{2}-\alpha(p, q)}\|f_j\|_{L^p (M)},\quad \text{where } \frac{n}{p}-\frac{n}{q}=2 \text{ and } \frac{2n}{n-1}<q\leq \frac{2n^2}{n^2-n-2},
\end{align}
since $-\frac{(n-1)(n-2)}{2n}<-\frac{n-3}{2}$. By a direct computation, one can check that
\begin{align}\label{Sj-Sj-tilde}
    \|S_j\|_{L_y^p(M)\to L_x^q(M)}\lesssim (2^j)^{\frac{n-3}{2}}\|\tilde{S}_j\|_{L_Y^p (M)\to L_X^q (M)},\quad \text{where } \frac{n}{p}-\frac{n}{q}=2 \text{ and } 2\leq q\leq \infty.
\end{align}
By \eqref{Sj-tilde-on-Sobolev} and \eqref{Sj-Sj-tilde}, we have, for some $\alpha(p, q)>0$,
\begin{align*}
    \|S_j\|_{L^p (M)\to L^q (M)}\lesssim (2^j)^{-\alpha(p, q)},\quad \text{where } \frac{n}{p}-\frac{n}{q}=2 \text{ and } \frac{2n}{n-1}<q\leq \frac{2n^2}{n^2-n-2}.
\end{align*}
Summing over all $1\leq j\leq \lfloor \log_2 \lambda \rfloor$, we have a uniform estimate $\sum_j \|S_j\|_{L^{p_0}(M)\to L^{q_0}(M)}\lesssim 1$. By this, if we set $S_\lambda=\sum_{1<2^j\leq \lambda} S_j$, then
\begin{align}\label{S-lambda-Sobolev-low-q}
    \|S_\lambda f\|_{L^q (M)}\lesssim \|f\|_{L^p (M)},\quad \text{where } \frac{n}{p}-\frac{n}{q}=2 \text{ and } \frac{2n}{n-1}<q\leq \frac{2n^2}{n^2-n-2}.
\end{align}

To deal with the other exponents $\frac{2n^2}{n^2-n-2}\leq q<\frac{2n}{n-3}$ (this may be a place where we need the assumption $n\geq 4$ to make sure that $\frac{2n}{n-3}$ is well-defined, but our arguments shall work for $n=3$, since ultimately we need \eqref{S-lambda-low-q-est} below for $\frac{2n}{n-1}<q<\frac{2(n+1)}{n-1}$, so let us focus on $n\geq 4$ here, for simplicity), as before, one can obtain
\begin{align*}
    \|\tilde{S}_j f_j\|_{L^{\frac{2(n+1)}{n-1}}(M)}&\lesssim (2^j)^{-\frac{n(n-1)}{2(n+1)}}\|f_j\|_{L^2 (M)}, \\
    \|\tilde{S}_j f_j\|_{L^\infty (M)}&\lesssim \|f_j\|_{L^1 (M)}.
\end{align*}
By interpolation,
\begin{align*}
    \|\tilde{S}_j f_j\|_{L^{\frac{2n^2}{(n-1)(n-2)}}(M)}\lesssim (2^j)^{-\frac{(n-1)(n-2)}{2n}}\|f_j\|_{L^{\frac{2n^2}{n^2+n+2}} (M)}.
\end{align*}
Interpolating this with
\begin{align*}
    \|\tilde{S}_j f_j\|_{L^\infty (M)}\lesssim \|f_j\|_{L^{\frac{n}{2}}(M)},
\end{align*}
we have
\begin{align*}
    \|\tilde{S}_j f_j\|_{L^{\frac{2n}{n-3}}(M)}\lesssim (2^j)^{-\frac{n-3}{2}}\|f_j\|_{L^{\frac{2n}{n+1}}(M)}.
\end{align*}
Again, interpolating this with \eqref{p0-q0-scaled-est}, we have, for some $\tilde{\alpha}(p, q)>0$,
\begin{align*}
    \|\tilde{S}_j f_j\|_{L^q (M)}\lesssim (2^j)^{-\frac{n-3}{2}-\tilde{\alpha}(p, q)}\|f_j\|_{L^p (M)},\quad \frac{n}{p}-\frac{n}{q}=2,\quad \frac{2n^2}{n^2-n-2}\leq q<\frac{2n}{n-3}.
\end{align*}
Summing over all $1\leq j\leq \lfloor \log_2 \lambda \rfloor$, we have
\begin{align*}
    \|S_\lambda f\|_{L^q (M)}\lesssim \|f\|_{L^p (M)},\quad \text{where } \frac{n}{p}-\frac{n}{q}=2 \text{ and } \frac{2n^2}{n^2-n-2}\leq q<\frac{2n}{n-3}.
\end{align*}
By this and \eqref{S-lambda-Sobolev-low-q}, we have
\begin{align}\label{S-lambda-unif-Sobolev}
    \|S_\lambda f\|_{L^q (M)}\lesssim \|f\|_{L^p (M)},\quad \text{where } \frac{n}{p}-\frac{n}{q}=2 \text{ and } \frac{2n}{n-1}<q<\frac{2n}{n-3}.
\end{align}
We also note that if $\frac{n}{p}-\frac{n}{q}=2$, then $\frac{np}{n-2p}=q$. With this in mind, by \eqref{S-lambda-unif-Sobolev} and H\"older's inequality, if $\frac{n}{p}-\frac{n}{q}=2$ and $\frac{2n}{n-1}<q<\frac{2n}{n-3}$, then we have
\begin{align}\label{S-lambda-low-q-est}
    \begin{split}
        \|S_\lambda (V\cdot \mathds{1}_\lambda (P_V)f)\|_{L^q (M)}&\lesssim \|V\cdot \mathds{1}_\lambda (P_V)f\|_{L^p (M)} \\
        &\leq \|V\|_{L^{\frac{n}{2}}(M)}\|\mathds{1}_\lambda(P_V)f\|_{L^{\frac{np}{n-2p}}(M)} \\
        &\leq C_V \frac{\lambda^{\sigma(q)}}{(\log \lambda)^{\frac{1}{2}}}\|f\|_{L^2 (M)}, \quad \frac{2n}{n-1}<q\leq \frac{2n}{n-3},
    \end{split}
\end{align}
which shows that the bound of \eqref{ineqs:result-(16)}, without a $(\log \lambda)^{\epsilon'}$-loss, still holds for $\frac{2n}{n-1}<q<\frac{2n}{n-3}$, even though technically we did not check if \eqref{ineq:prop-(16)} holds without a $(\log \lambda)^{\epsilon'}$-loss. This completes the proof of Case 2-2-3, and hence, Proposition \ref{prop:hv-Kato-and-Ln2}.

\section{Future directions}\label{sec:future}

The results of this paper raise a number of natural questions and suggest several avenues for further development. We outline below what we believe are the most pressing directions, ranging from sharpness and optimality to extensions involving non-self-adjoint perturbations and critical potential classes.

\subsection{Sharpness and optimality}

A fundamental question left open by Theorems \ref{thm:UP-FR-on}, \ref{thm:hv-Kato-or-Ln2}, and \ref{thm:UP-nonposit-curv} concerns the sharpness of the exponents and logarithmic gains obtained. Specifically, one may ask whether the exponents $\sigma(q)$ appearing in \eqref{ineq:UP-FR-on} and the logarithmic exponents $\delta_n(q)$ and $\kappa(q)$ in \eqref{ineq:V=0-nonpositive} and \eqref{ineq:s2-all} are optimal. On manifolds with nonpositive or negative sectional curvature, do there exist sequences of spectrally localized functions $f_\lambda$ supported on sets $E_\lambda$ such that the inequalities \eqref{ineq:V=0-nonpositive} become equalities up to constants? Constructing such extremal functions—likely using distorted plane waves or approximate eigenfunctions concentrating near closed geodesics—would demonstrate the optimality of our curvature-dependent improvements and is a natural next step.

\subsection{Stability and quantitative recovery}

The uniqueness results for recovery from incomplete spectral data established in Section \ref{sec:recovery} are purely qualitative. In practical applications, one rarely has exact agreement on the observed spectral coefficients; rather, one has approximate data. This motivates the study of \emph{stability}: if two functions agree approximately on the observed spectral data, can we bound the distance between them in $L^2(M)$? A natural conjecture is that the uncertainty principles proved above imply a Donoho--Stark type stability estimate of the form
\[
\|f-g\|_{L^2(M)} \leq C\Bigl(\|\Pi_I(f-g)\|_{L^2(M)} + \eta^{-\alpha}\|\Pi_{I^c}(f-g)\|_{L^2(M)}\Bigr)
\]
under appropriate support size conditions, where $\Pi_I$ denotes projection onto the missing spectral window and $\Pi_{I^c}$ onto the observed frequencies. Developing such quantitative estimates would substantially strengthen the applicability of our recovery framework.

\subsection{Manifolds with boundary}

Throughout this paper, we have assumed that $M$ is a compact Riemannian manifold without boundary. An immediate and nontrivial extension is to the setting of compact manifolds with boundary, equipped with, say, Dirichlet or Neumann boundary conditions. Spectral projection estimates for the Laplace-Beltrami operator on manifolds with boundary are more delicate due to the presence of glancing and diffractive phenomena (see, e.g., the work of Smith--Sogge \cite{SmithSogge2007Boundary} and Grieser \cite{Grieser1992PhD-Thesis}). However, recent progress suggests that uniform Sobolev estimates similar to those we have employed may still hold, possibly with additional contributions from the boundary. Extending Theorems \ref{thm:UP-FR-on} and \ref{thm:hv-Kato-or-Ln2} to this setting would require a careful analysis of how boundary conditions interact with the restriction-theoretic approach central to our arguments.

\subsection{Quantum chaos and scarring}

On manifolds with chaotic geodesic flow, the Quantum Unique Ergodicity conjecture (studied by Lindenstrauss \cite{Lindenstrauss2006Ergodicity} and Soundararajan \cite{Soundararajan2010Ergodicity} in the arithmetic setting and by Dyatlov--Jin--Nonnenmacher \cite{DyatlovJinNonnenmacher2022Control} in certain non-arithmetic cases) asserts that eigenfunctions become equidistributed in the high-energy limit. However, exceptional eigenfunctions known as \emph{quantum scars} can exhibit anomalous concentration near unstable periodic geodesics. Our framework provides a tool for quantifying such scarring: if a spectrally localized function $f = \mathds{1}_{[\lambda,\lambda+1]}(\sqrt{-\Delta_g})f$ concentrates near a closed geodesic $\gamma$ (so that its support is contained in a small tubular neighborhood of $\gamma$), then Theorem \ref{thm:UP-FR-on} forces a lower bound on the measure of that neighborhood in terms of $\#S$ and the Fourier ratio $\FR_{2\alpha(q)}(f)$. Investigating the extent to which these bounds are compatible with existing constructions of scarred eigenfunctions would be a fruitful direction at the interface of spectral geometry and quantum chaos.

\subsection{Critical and super-critical potentials}

The class $V\in L^{n/2}(M)$ considered in Theorem \ref{thm:hv-Kato-or-Ln2} is scaling-critical for the Schrödinger operator. Our estimates for $V\in L^{n/2}(M)$ are restricted to the range $2<q\leq 2n/(n-4)$ when $n\geq 5$ and $2<q<\infty$ when $n=3,4$. The endpoint $q=2n/(n-4)$ for $n>4$ is critical, and one may ask whether the estimate extends to $q>2n/(n-4)$ or whether counterexamples exist. This question is intimately connected to the optimality of the uniform Sobolev estimates of Blair--Huang--Sire--Sogge that we rely upon. Moreover, the case $V\in \mathcal{K}(M)\cap L^{n/2}(M)$ in Theorem \ref{thm:UP-nonposit-curv} yields logarithmic improvements for certain exponent ranges, but the presence of the arbitrarily small $\epsilon'$ in \eqref{setup-kappa} for some regimes suggests that the optimal logarithmic exponents may be larger than those we have obtained. Resolving these endpoint issues would require a refined analysis of the spectral projection bounds for singular potentials.

\subsection{Non-self-adjoint perturbations}

A more speculative but potentially far-reaching direction concerns Schrödinger operators with complex-valued potentials, i.e., $H_V = -\Delta_g + V$ where $V$ is complex and not necessarily self-adjoint. Such operators arise in the study of open quantum systems, resonances, and non-Hermitian physics. The spectrum is no longer real, and spectral projections onto spectral windows are more delicate due to the possibility of pseudospectral phenomena. Nevertheless, resolvent estimates for such operators in existing literature might serve as a substitute for the spectral projection bounds used in this paper. One could envision developing a theory of \emph{pseudospectral uncertainty principles}, where the notion of spectral localization is replaced by pseudospectral concentration, with the goal of obtaining analogous support-size lower bounds. Such results would represent a significant departure from the self-adjoint setting and would open new connections to non-Hermitian harmonic analysis.

\subsection{Higher-order and fractional operators}

Finally, we note that the restriction-theoretic approach underlying Theorem \ref{thm:UP-FR-on} is not intrinsically tied to the Laplace-Beltrami operator. One could consider higher-order elliptic operators, such as the poly-Laplacian $(-\Delta_g)^m$ for $m>1$, or fractional powers $(-\Delta_g)^s$ for $0<s<1$. The spectral projection estimates for such operators are less developed, particularly on compact manifolds, but existing work on $L^p$ bounds for spectral clusters of fractional Laplacians may suggest that analogues of Theorems \ref{thm:UP-FR-on} and \ref{thm:UP-nonposit-curv} should hold, possibly with modified exponents reflecting the order of the operator. Developing a unified framework for uncertainty principles for a broad class of pseudodifferential operators on manifolds remains an ambitious but worthwhile goal.

\subsection*{Concluding perspective}

The framework developed in this paper suggests that spectral localization, quantified by the Fourier ratio $\FR_q(f)$ or its analogues, serves as a universal bridge connecting spectral theory, harmonic analysis, and geometric PDE. The results presented here—from the basic restriction-theoretic uncertainty principle on compact manifolds to the refined logarithmic estimates under curvature assumptions and the stability under singular perturbations—indicate that this perspective is both robust and flexible. We hope that the questions raised above will stimulate further research at the intersection of these fields, and that the techniques introduced herein will find applications beyond the specific problems considered in this work.

\bibliographystyle{amsplain}
\bibliography{references}

@book {Sogge2017FourierBook,
    AUTHOR = {Sogge, Christopher D.},
     TITLE = {Fourier integrals in classical analysis},
    SERIES = {Cambridge Tracts in Mathematics},
    VOLUME = {210},
   EDITION = {Second},
 PUBLISHER = {Cambridge University Press, Cambridge},
      YEAR = {2017},
     PAGES = {xiv+334},
      ISBN = {978-1-107-12007-5},
   MRCLASS = {42-02 (35S05 35S30 42B15 42B20 47G30 58J40 58J47)},
  MRNUMBER = {3645429},
       DOI = {10.1017/9781316341186},
       URL = {https://doi.org/10.1017/9781316341186},
}

@article {HassellTacy2015improvement,
    AUTHOR = {Hassell, Andrew and Tacy, Melissa},
     TITLE = {Improvement of eigenfunction estimates on manifolds of
              nonpositive curvature},
   JOURNAL = {Forum Math.},
  FJOURNAL = {Forum Mathematicum},
    VOLUME = {27},
      YEAR = {2015},
    NUMBER = {3},
     PAGES = {1435--1451},
      ISSN = {0933-7741},
   MRCLASS = {35R01 (35P15 53C21 58J50)},
  MRNUMBER = {3341481},
       DOI = {10.1515/forum-2012-0176},
       URL = {https://doi.org/10.1515/forum-2012-0176},
}

@article {BlairSogge2019logarithmic,
    AUTHOR = {Blair, Matthew D. and Sogge, Christopher D.},
     TITLE = {Logarithmic improvements in {$L^p$} bounds for eigenfunctions
              at the critical exponent in the presence of nonpositive
              curvature},
   JOURNAL = {Invent. Math.},
  FJOURNAL = {Inventiones Mathematicae},
    VOLUME = {217},
      YEAR = {2019},
    NUMBER = {2},
     PAGES = {703--748},
      ISSN = {0020-9910},
   MRCLASS = {58J50 (35P15 35R01 58J05 81Q20)},
  MRNUMBER = {3987179},
       DOI = {10.1007/s00222-019-00873-6},
       URL = {https://doi.org/10.1007/s00222-019-00873-6},
}

@article {SmithSogge2007Boundary,
    AUTHOR = {Smith, Hart F. and Sogge, Christopher D.},
     TITLE = {On the {$L^p$} norm of spectral clusters for compact manifolds
              with boundary},
   JOURNAL = {Acta Math.},
  FJOURNAL = {Acta Mathematica},
    VOLUME = {198},
      YEAR = {2007},
    NUMBER = {1},
     PAGES = {107--153},
      ISSN = {0001-5962},
   MRCLASS = {58J50 (35J15 35P15 47F05 47G10)},
  MRNUMBER = {2316270},
MRREVIEWER = {Elena A. Mazepa},
       DOI = {10.1007/s11511-007-0014-z},
       URL = {https://doi-org.libproxy.unm.edu/10.1007/s11511-007-0014-z},
}

@article {BlairSireSogge2021Quasimode,
    AUTHOR = {Blair, Matthew D. and Sire, Yannick and Sogge, Christopher D.},
     TITLE = {Quasimode, eigenfunction and spectral projection bounds for
              {S}chr\"{o}dinger operators on manifolds with critically singular
              potentials},
   JOURNAL = {J. Geom. Anal.},
  FJOURNAL = {Journal of Geometric Analysis},
    VOLUME = {31},
      YEAR = {2021},
    NUMBER = {7},
     PAGES = {6624--6661},
      ISSN = {1050-6926},
   MRCLASS = {35P20 (58J50)},
  MRNUMBER = {4289239},
       DOI = {10.1007/s12220-019-00287-z},
       URL = {https://doi.org/10.1007/s12220-019-00287-z},
}

@article {BlairHuangSireSogge2022UniformSobolev,
    AUTHOR = {Blair, Matthew D. and Huang, Xiaoqi and Sire, Yannick and
              Sogge, Christopher D.},
     TITLE = {Uniform {S}obolev estimates on compact manifolds involving
              singular potentials},
   JOURNAL = {Rev. Mat. Iberoam.},
  FJOURNAL = {Revista Matem\'{a}tica Iberoamericana},
    VOLUME = {38},
      YEAR = {2022},
    NUMBER = {4},
     PAGES = {1239--1286},
      ISSN = {0213-2230,2235-0616},
   MRCLASS = {58J50 (35J10)},
  MRNUMBER = {4445914},
MRREVIEWER = {G.\ V.\ Rozenblum},
       DOI = {10.4171/rmi/1300},
       URL = {https://doi.org/10.4171/rmi/1300},
}

@article {Simon1982Semigroup,
    AUTHOR = {Simon, Barry},
     TITLE = {Schr\"{o}dinger semigroups},
   JOURNAL = {Bull. Amer. Math. Soc. (N.S.)},
  FJOURNAL = {American Mathematical Society. Bulletin. New Series},
    VOLUME = {7},
      YEAR = {1982},
    NUMBER = {3},
     PAGES = {447--526},
      ISSN = {0273-0979},
   MRCLASS = {81-02 (35-02 35P05 47D05 47F05 81C10)},
  MRNUMBER = {670130},
MRREVIEWER = {Ren\'{e} Carmona},
       DOI = {10.1090/S0273-0979-1982-15041-8},
       URL = {https://doi-org.libproxy.unm.edu/10.1090/S0273-0979-1982-15041-8},
}

@article {BourgainShaoSoggeYao2015Resolvent,
    AUTHOR = {Bourgain, Jean and Shao, Peng and Sogge, Christopher D. and
              Yao, Xiaohua},
     TITLE = {On {$L^p$}-resolvent estimates and the density of eigenvalues
              for compact {R}iemannian manifolds},
   JOURNAL = {Comm. Math. Phys.},
  FJOURNAL = {Communications in Mathematical Physics},
    VOLUME = {333},
      YEAR = {2015},
    NUMBER = {3},
     PAGES = {1483--1527},
      ISSN = {0010-3616},
   MRCLASS = {58J50 (47A10)},
  MRNUMBER = {3302640},
MRREVIEWER = {G. V. Rozenblum},
       DOI = {10.1007/s00220-014-2077-y},
       URL = {https://doi.org/10.1007/s00220-014-2077-y},
}

@article {KenigRuizSogge1987UniformSobolev,
    AUTHOR = {Kenig, C. E. and Ruiz, A. and Sogge, C. D.},
     TITLE = {Uniform {S}obolev inequalities and unique continuation for
              second order constant coefficient differential operators},
   JOURNAL = {Duke Math. J.},
  FJOURNAL = {Duke Mathematical Journal},
    VOLUME = {55},
      YEAR = {1987},
    NUMBER = {2},
     PAGES = {329--347},
      ISSN = {0012-7094},
   MRCLASS = {35E99 (35B45 35B60)},
  MRNUMBER = {894584},
MRREVIEWER = {Claude Zuily},
       DOI = {10.1215/S0012-7094-87-05518-9},
       URL = {https://doi-org.libproxy.unm.edu/10.1215/S0012-7094-87-05518-9},
}

@article {DSFKenigSalo2014Forum,
    AUTHOR = {Dos Santos Ferreira, David and Kenig, Carlos E. and Salo,
              Mikko},
     TITLE = {On {$L^p$} resolvent estimates for {L}aplace-{B}eltrami
              operators on compact manifolds},
   JOURNAL = {Forum Math.},
  FJOURNAL = {Forum Mathematicum},
    VOLUME = {26},
      YEAR = {2014},
    NUMBER = {3},
     PAGES = {815--849},
      ISSN = {0933-7741},
   MRCLASS = {35R01 (35J05 35P05 35R30 42B37)},
  MRNUMBER = {3200351},
MRREVIEWER = {Qi Han},
       DOI = {10.1515/forum-2011-0157},
       URL = {https://doi.org/10.1515/forum-2011-0157},
}

@misc{BlairHuangSogge2022Improved,
Author = {Matthew D. Blair and Xiaoqi Huang and Christopher D. Sogge},
Title = {Improved spectral projection estimates},
Year = {2022},
Note = {to appear J. Eur. Math. Soc. (JEMS), published online first, DOI 10.4171/JEMS/1571},
Howpublished = {\url{https://ems.press/journals/jems/articles/14298419}},
Eprint = {DOI 10.4171/JEMS/1571},
}

@article {S.Huang-Sogge2014Resolvent,
    AUTHOR = {Huang, Shanlin and Sogge, Christopher D.},
     TITLE = {Concerning {$L^p$} resolvent estimates for simply connected
              manifolds of constant curvature},
   JOURNAL = {J. Funct. Anal.},
  FJOURNAL = {Journal of Functional Analysis},
    VOLUME = {267},
      YEAR = {2014},
    NUMBER = {12},
     PAGES = {4635--4666},
      ISSN = {0022-1236},
   MRCLASS = {58C40 (35R01)},
  MRNUMBER = {3275105},
MRREVIEWER = {Mohammed El A\"{\i}di},
       DOI = {10.1016/j.jfa.2014.08.016},
       URL = {https://doi.org/10.1016/j.jfa.2014.08.016},
}

@article {Guneysu2012OnGeneralizedSchrodingerSemigroups,
    AUTHOR = {G{\"{u}}neysu, Batu},
     TITLE = {On generalized {S}chr\"odinger semigroups},
   JOURNAL = {J. Funct. Anal.},
  FJOURNAL = {Journal of Functional Analysis},
    VOLUME = {262},
      YEAR = {2012},
    NUMBER = {11},
     PAGES = {4639--4674},
      ISSN = {0022-1236,1096-0783},
   MRCLASS = {58J99 (47D08 53B21 53B50 53C20 81Q35)},
  MRNUMBER = {2913682},
MRREVIEWER = {Isamu\ D\^oku},
       DOI = {10.1016/j.jfa.2011.11.030},
       URL = {https://doi.org/10.1016/j.jfa.2011.11.030},
}

@article {Sturm1993SchrodingerSemigroups,
    AUTHOR = {Sturm, Karl-Theodor},
     TITLE = {Schr\"odinger semigroups on manifolds},
   JOURNAL = {J. Funct. Anal.},
  FJOURNAL = {Journal of Functional Analysis},
    VOLUME = {118},
      YEAR = {1993},
    NUMBER = {2},
     PAGES = {309--350},
      ISSN = {0022-1236,1096-0783},
   MRCLASS = {58G11 (31C12 35J10 47D06 47F05 58G30)},
  MRNUMBER = {1250266},
MRREVIEWER = {Michael\ R\"ockner},
       DOI = {10.1006/jfan.1993.1147},
       URL = {https://doi.org/10.1006/jfan.1993.1147},
}

@article {FrankSabin2017RestrictionTheorem,
    AUTHOR = {Frank, Rupert L. and Sabin, Julien},
     TITLE = {Restriction theorems for orthonormal functions, {S}trichartz
              inequalities, and uniform {S}obolev estimates},
   JOURNAL = {Amer. J. Math.},
  FJOURNAL = {American Journal of Mathematics},
    VOLUME = {139},
      YEAR = {2017},
    NUMBER = {6},
     PAGES = {1649--1691},
      ISSN = {0002-9327,1080-6377},
   MRCLASS = {42B25 (46E30)},
  MRNUMBER = {3730931},
MRREVIEWER = {Oscar\ Blasco},
       DOI = {10.1353/ajm.2017.0041},
       URL = {https://doi.org/10.1353/ajm.2017.0041},
}

@article {BlairPark2025LqEstimates,
    AUTHOR = {Blair, Matthew D. and Park, Chamsol},
     TITLE = {{$L^q$} estimates on the restriction of {S}chr\"odinger
              eigenfunctions with singular potentials},
   JOURNAL = {Comm. Partial Differential Equations},
  FJOURNAL = {Communications in Partial Differential Equations},
    VOLUME = {50},
      YEAR = {2025},
    NUMBER = {10-12},
     PAGES = {1234--1290},
      ISSN = {0360-5302,1532-4133},
   MRCLASS = {58J40 (35J10 35S30 42B37)},
  MRNUMBER = {5000983},
       DOI = {10.1080/03605302.2025.2551500},
       URL = {https://doi.org/10.1080/03605302.2025.2551500},
}

@article {Hickman2020UniformResolventEstimates,
    AUTHOR = {Hickman, Jonathan},
     TITLE = {Uniform {$L^p$} resolvent estimates on the torus},
   JOURNAL = {Math. Res. Rep.},
  FJOURNAL = {Mathematics Research Reports},
    VOLUME = {1},
      YEAR = {2020},
     PAGES = {31--45},
      ISSN = {2772-9559},
   MRCLASS = {35J05 (11P21)},
  MRNUMBER = {4387449},
}

@article {HuangSogge2021Weyl,
    AUTHOR = {Huang, Xiaoqi and Sogge, Christopher D.},
     TITLE = {Weyl formulae for {S}chr\"odinger operators with critically
              singular potentials},
   JOURNAL = {Comm. Partial Differential Equations},
  FJOURNAL = {Communications in Partial Differential Equations},
    VOLUME = {46},
      YEAR = {2021},
    NUMBER = {11},
     PAGES = {2088--2133},
      ISSN = {0360-5302,1532-4133},
   MRCLASS = {58J50 (35P15)},
  MRNUMBER = {4313448},
MRREVIEWER = {He-Jun\ Sun},
       DOI = {10.1080/03605302.2021.1925915},
       URL = {https://doi.org/10.1080/03605302.2021.1925915},
}

@article {HuangZhang2022PtwiseWeyl,
    AUTHOR = {Huang, Xiaoqi and Zhang, Cheng},
     TITLE = {Pointwise {W}eyl laws for {S}chr\"odinger operators with
              singular potentials},
   JOURNAL = {Adv. Math.},
  FJOURNAL = {Advances in Mathematics},
    VOLUME = {410},
      YEAR = {2022},
     PAGES = {Paper No. 108688, 34},
      ISSN = {0001-8708,1090-2082},
   MRCLASS = {58J50 (35P15 81Q10)},
  MRNUMBER = {4491246},
MRREVIEWER = {He-Jun\ Sun},
       DOI = {10.1016/j.aim.2022.108688},
       URL = {https://doi.org/10.1016/j.aim.2022.108688},
}

@article {HuangZhang2023SharpPtwiseWeyl,
    AUTHOR = {Huang, Xiaoqi and Zhang, Cheng},
     TITLE = {Sharp pointwise {W}eyl laws for {S}chr\"odinger operators with
              singular potentials on flat tori},
   JOURNAL = {Comm. Math. Phys.},
  FJOURNAL = {Communications in Mathematical Physics},
    VOLUME = {401},
      YEAR = {2023},
    NUMBER = {2},
     PAGES = {1063--1125},
      ISSN = {0010-3616,1432-0916},
   MRCLASS = {58J50 (35J10)},
  MRNUMBER = {4610271},
MRREVIEWER = {E.\ Pozzoli},
       DOI = {10.1007/s00220-023-04665-1},
       URL = {https://doi.org/10.1007/s00220-023-04665-1},
}

@article {LiYau1986Parabolic,
    AUTHOR = {Li, Peter and Yau, Shing-Tung},
     TITLE = {On the parabolic kernel of the {S}chr\"odinger operator},
   JOURNAL = {Acta Math.},
  FJOURNAL = {Acta Mathematica},
    VOLUME = {156},
      YEAR = {1986},
    NUMBER = {3-4},
     PAGES = {153--201},
      ISSN = {0001-5962,1871-2509},
   MRCLASS = {58G11 (35J10)},
  MRNUMBER = {834612},
MRREVIEWER = {Harold\ Donnelly},
       DOI = {10.1007/BF02399203},
       URL = {https://doi.org/10.1007/BF02399203},
}

@misc{IosevichMayeliWyman2026Spectral,
  Author={Alex Iosevich and Azita Mayeli and Emmett Wyman},
  Title={Spectral synthesis on {R}iemannian manifolds},
  Year={2026},
  Note={Preprint},
  Howpublished = {\url{https://arxiv.org/abs/2603.21451}},
  Eprint = {arxiv:2603.21451},
}

@article {HuangWangZhang2026restriction,
    AUTHOR = {Huang, Xiaoqi and Wang, Xing and Zhang, Cheng},
     TITLE = {Restriction of {S}chr\"odinger {E}igenfunctions to
              {S}ubmanifolds},
   JOURNAL = {Comm. Math. Phys.},
  FJOURNAL = {Communications in Mathematical Physics},
    VOLUME = {407},
      YEAR = {2026},
    NUMBER = {4},
     PAGES = {Paper No. 67},
      ISSN = {0010-3616,1432-0916},
   MRCLASS = {99-06},
  MRNUMBER = {5041854},
       DOI = {10.1007/s00220-026-05576-7},
       URL = {https://doi.org/10.1007/s00220-026-05576-7},
}

@misc{IosevichMayeliWyman2024Uncertainty,
  Author={Alex Iosevich and Azita Mayeli and Emmett Wyman},
  Title={Fourier Uncertainty Principles on {R}iemannian Manifolds},
  Year={2024},
  Howpublished = {\url{https://arxiv.org/abs/2411.09057}},
  Eprint = {arXiv:2411.09057},
}

@article {FrankSabin2017Spectral,
    AUTHOR = {Frank, Rupert L. and Sabin, Julien},
     TITLE = {Spectral cluster bounds for orthonormal systems and
              oscillatory integral operators in {S}chatten spaces},
   JOURNAL = {Adv. Math.},
  FJOURNAL = {Advances in Mathematics},
    VOLUME = {317},
      YEAR = {2017},
     PAGES = {157--192},
      ISSN = {0001-8708,1090-2082},
   MRCLASS = {42B20 (35P05 42B25)},
  MRNUMBER = {3682666},
MRREVIEWER = {Christoph\ Kriegler},
       DOI = {10.1016/j.aim.2017.06.023},
       URL = {https://doi.org/10.1016/j.aim.2017.06.023},
}

@article{Sogge1988Fourier,
  author = {Sogge, Christopher D.},
  title = {Concerning the {$L^p$} norm of spectral clusters for second-order elliptic operators on compact manifolds},
  journal = {J. Funct. Anal.},
  volume = {77},
  number = {1},
  pages = {123--138},
  year = {1988}
}

@article {BlairPark2026Resolvent,
    AUTHOR = {Blair, Matthew D. and Park, Chamsol},
     TITLE = {{$L^p-L^q$} resolvent restriction estimates for submanifolds},
   JOURNAL = {Proc. Amer. Math. Soc.},
  FJOURNAL = {Proceedings of the American Mathematical Society},
    VOLUME = {154},
      YEAR = {2026},
    NUMBER = {6},
     PAGES = {2563--2576},
      ISSN = {0002-9939,1088-6826},
   MRCLASS = {35S30 (42B37 58J40)},
  MRNUMBER = {5065058},
       DOI = {10.1090/proc/17612},
       URL = {https://doi.org/10.1090/proc/17612},
}

@article {HuangSogge2025Curvature,
    AUTHOR = {Huang, Xiaoqi and Sogge, Christopher D.},
     TITLE = {Curvature and sharp growth rates of log-quasimodes on compact
              manifolds},
   JOURNAL = {Invent. Math.},
  FJOURNAL = {Inventiones Mathematicae},
    VOLUME = {239},
      YEAR = {2025},
    NUMBER = {3},
     PAGES = {947--1008},
      ISSN = {0020-9910,1432-1297},
   MRCLASS = {58J50 (35J05 35P15 35R01)},
  MRNUMBER = {4861130},
MRREVIEWER = {Alexander\ G.\ Losev},
       DOI = {10.1007/s00222-025-01315-2},
       URL = {https://doi.org/10.1007/s00222-025-01315-2},
}

@article {CanzaniGalkowski2023APDE,
    AUTHOR = {Canzani, Yaiza and Galkowski, Jeffrey},
     TITLE = {Growth of high {$L^p$} norms for eigenfunctions: an
              application of geodesic beams},
   JOURNAL = {Anal. PDE},
  FJOURNAL = {Analysis \& PDE},
    VOLUME = {16},
      YEAR = {2023},
    NUMBER = {10},
     PAGES = {2267--2325},
      ISSN = {2157-5045,1948-206X},
   MRCLASS = {58J50 (35J05 35P05 35R02)},
  MRNUMBER = {4678142},
MRREVIEWER = {He-Jun\ Sun},
       DOI = {10.2140/apde.2023.16.2267},
       URL = {https://doi.org/10.2140/apde.2023.16.2267},
}

@article {RenZhang2024Improved,
    AUTHOR = {Ren, Tianyi and Zhang, An},
     TITLE = {Improved spectral cluster bounds for orthonormal systems},
   JOURNAL = {Forum Math.},
  FJOURNAL = {Forum Mathematicum},
    VOLUME = {36},
      YEAR = {2024},
    NUMBER = {5},
     PAGES = {1383--1392},
      ISSN = {0933-7741,1435-5337},
   MRCLASS = {58J50 (35J10 35P15 35R01)},
  MRNUMBER = {4790804},
MRREVIEWER = {Michael\ A.\ Perelmuter},
       DOI = {10.1515/forum-2023-0254},
       URL = {https://doi.org/10.1515/forum-2023-0254},
}

@article {ShaoYao2014UniformSobolev,
    AUTHOR = {Shao, Peng and Yao, Xiaohua},
     TITLE = {Uniform {S}obolev resolvent estimates for the
              {L}aplace-{B}eltrami operator on compact manifolds},
   JOURNAL = {Int. Math. Res. Not. IMRN},
  FJOURNAL = {International Mathematics Research Notices. IMRN},
      YEAR = {2014},
    NUMBER = {12},
     PAGES = {3439--3463},
      ISSN = {1073-7928,1687-0247},
   MRCLASS = {58J05 (35P15 35R01)},
  MRNUMBER = {3217667},
MRREVIEWER = {Yongqiang\ Fu},
       DOI = {10.1093/imrn/rnt051},
       URL = {https://doi.org/10.1093/imrn/rnt051},
}

@article{DonohoStark1989,
  author = {Donoho, David L. and Stark, Philip B.},
  title = {Uncertainty Principles and Signal Recovery},
  journal = {SIAM Journal on Applied Mathematics},
  volume = {49},
  number = {3},
  pages = {906--931},
  year = {1989}
}

@article{IosevichMayeli2025ACHA,
  author = {Iosevich, Alex and Mayeli, Azita},
  title = {Uncertainty Principles, Restriction, {B}ourgain's {$\Lambda_q$} Theorem, and Signal Recovery},
  journal = {Applied and Computational Harmonic Analysis},
  volume = {76},
  pages = {101734},
  year = {2025}
}

@book {Grieser1992PhD-Thesis,
    AUTHOR = {Grieser, Daniel},
     TITLE = {${L}^p$ bounds for eigenfunctions and spectral projections of
              the {L}aplacian near concave boundaries},
      NOTE = {Thesis (Ph.D.)--University of California, Los Angeles},
 PUBLISHER = {ProQuest LLC, Ann Arbor, MI},
      YEAR = {1992},
     PAGES = {87},
   MRCLASS = {99-05},
  MRNUMBER = {2688254},
       URL =
              {http://gateway.proquest.com/openurl?url_ver=Z39.88-2004&rft_val_fmt=info:ofi/fmt:kev:mtx:dissertation&res_dat=xri:pqdiss&rft_dat=xri:pqdiss:9301953},
}

@misc{IosevichPark2026UP-singular-potentials,
Author = {Alex Iosevich and Chamsol Park},
Title = {Uncertainty principles and singular potentials},
Year = {2026},
Howpublished = {\url{https://arxiv.org/abs/2604.15442}},
Eprint = {arXiv:2604.15442},
}

@article {Lindenstrauss2006Ergodicity,
    AUTHOR = {Lindenstrauss, Elon},
     TITLE = {Invariant measures and arithmetic quantum unique ergodicity},
   JOURNAL = {Ann. of Math. (2)},
  FJOURNAL = {Annals of Mathematics. Second Series},
    VOLUME = {163},
      YEAR = {2006},
    NUMBER = {1},
     PAGES = {165--219},
      ISSN = {0003-486X,1939-8980},
   MRCLASS = {11F72 (37A45 37D40)},
  MRNUMBER = {2195133},
MRREVIEWER = {Ze\'ev\ Rudnick},
       DOI = {10.4007/annals.2006.163.165},
       URL = {https://doi.org/10.4007/annals.2006.163.165},
}

@article {Soundararajan2010Ergodicity,
    AUTHOR = {Soundararajan, Kannan},
     TITLE = {Quantum unique ergodicity for {${\rm SL}_2(\mathbb{Z}
              )\backslash\mathbb{H}$}},
   JOURNAL = {Ann. of Math. (2)},
  FJOURNAL = {Annals of Mathematics. Second Series},
    VOLUME = {172},
      YEAR = {2010},
    NUMBER = {2},
     PAGES = {1529--1538},
      ISSN = {0003-486X,1939-8980},
   MRCLASS = {11F72 (58J51)},
  MRNUMBER = {2680500},
MRREVIEWER = {Gergely\ Harcos},
       DOI = {10.4007/annals.2010.172.1529},
       URL = {https://doi.org/10.4007/annals.2010.172.1529},
}

@article {DyatlovJinNonnenmacher2022Control,
    AUTHOR = {Dyatlov, Semyon and Jin, Long and Nonnenmacher, St\'ephane},
     TITLE = {Control of eigenfunctions on surfaces of variable curvature},
   JOURNAL = {J. Amer. Math. Soc.},
  FJOURNAL = {Journal of the American Mathematical Society},
    VOLUME = {35},
      YEAR = {2022},
    NUMBER = {2},
     PAGES = {361--465},
      ISSN = {0894-0347,1088-6834},
   MRCLASS = {58J51},
  MRNUMBER = {4374954},
MRREVIEWER = {Emmanuel\ Schenck},
       DOI = {10.1090/jams/979},
       URL = {https://doi.org/10.1090/jams/979},
}

\end{document}